\newtheorem{theorem}{Theorem}[section]
\newtheorem{proposition}[theorem]{Proposition}
\newtheorem{lemma}[theorem]{Lemma}
\newtheorem{corollary}[theorem]{Corollary}
\theoremstyle{definition}
\newtheorem{definition}[theorem]{Definition}
\newtheorem{remark}[theorem]{Remark}
\numberwithin{equation}{section}
\begin{document}
\baselineskip=15pt

\title[Branched ${\rm SL}(r, {\mathbb C})$-opers]{Branched ${\rm SL}(r, {\mathbb C})$-opers}

\author[I. Biswas]{Indranil Biswas}

\address{School of Mathematics, Tata Institute of Fundamental
Research, Homi Bhabha Road, Mumbai 400005, India}

\email{indranil@math.tifr.res.in}

\author[S. Dumitrescu]{Sorin Dumitrescu}

\address{Universit\'e C\^ote d'Azur, CNRS, LJAD, France}

\email{dumitres@unice.fr}

\author[S. Heller]{Sebastian Heller}

\address{Institute of Differential Geometry,
Leibniz Universit\"at Hannover,
Welfengarten 1, 30167 Hannover}
\email{seb.heller@gmail.com}

\subjclass[2010]{53B15, 14H60, 14D21}

\keywords{Opers, logarithmic connection, residue, branch structure}

\date{}

\begin{abstract}
Branched projective structures were introduced by Mandelbaum \cite{Ma1}, \cite{Ma2}, and 
opers were introduced by Beilinson and Drinfeld \cite{BD1}, \cite{BD2}. We define the 
branched analog of ${\rm SL}(r, {\mathbb C})$--opers and investigate their properties. For 
the usual ${\rm SL}(r, {\mathbb C})$--opers, the underlying holomorphic vector bundle is
actually determined uniquely up to tensoring with a holomorphic line bundle
of order $r$. For the branched 
${\rm SL}(r, {\mathbb C})$--opers, the underlying holomorphic vector bundle depends more 
intricately on the oper. While the holomorphic connection for a branched ${\rm SL}(r, 
{\mathbb C})$--oper is nonsingular, given a branched ${\rm SL}(r, {\mathbb C})$--oper, we 
associate to it a certain holomorphic vector bundle equipped with a logarithmic connection. 
This holomorphic vector bundle in question supporting a logarithmic connection does not 
depend on the branched oper. We characterize the branched ${\rm SL}(r, {\mathbb C})$--opers 
in terms of the logarithmic connections on this fixed holomorphic vector bundle.
\end{abstract}

\maketitle

\tableofcontents

\section{Introduction}\label{se1}

Opers were introduced by Beilinson and Drinfeld \cite{BD1}, \cite{BD2}. For this they were motivated by the
earlier works of Drinfeld and Sokolov \cite{DS1}, \cite{DS2}. Given a semisimple complex Lie group $G$,
a $G$--oper on a compact Riemann surface $X$ consists of
\begin{itemize}
\item a holomorphic principal $G$--bundle $P$ on $X$ equipped with a holomorphic connection $\nabla$, and

\item a holomorphic reduction of the structure group of $P$ to a Borel subgroup $B$ of $G$,
\end{itemize}
such that the reduction to $B$ satisfies an analogue of the Griffiths transversality condition 
with respect to this connection $\nabla$, and moreover the second fundamental form of the 
above reduction of structure group to $B$, for the connection $\nabla$, 
satisfies certain nondegeneracy conditions. Opers are useful in diverse topics, for example 
in geometric Langlands correspondence, nonabelian Hodge theory, some branches of 
mathematical physics, differential equations et cetera; see \cite{BF}, \cite{DFKMMN}, 
\cite{FT}, \cite{FG}, \cite{FG2}, \cite{FB2}, \cite{CS}, \cite{Fr}, \cite{Fr2}, \cite{KSZ}, \cite{MR}, 
\cite{BSY} and references therein.

An ${\rm SL}(2,{\mathbb C})$--oper on a compact Riemann surface $X$ corresponds to a complex 
projective structure on $X$ together with a
theta characteristic on $X$. We recall that a complex projective structure on $X$ is given 
by a covering of $X$ by holomorphic coordinate charts such that all the transition 
functions are restrictions of M\"obius transformations \cite{Gu}. During the early 
seventies, Mandelbaum introduced, and studied, the notion of a branched complex projective 
structure \cite{Ma1}, \cite{Ma2}. Examples of branched complex projective structures are provided 
 by pulling back the usual complex 
projective structures through a holomorphic ramified covering map; see \cite{BiD2} for a 
recent study of branched complex projective structures. For higher dimensional complex 
manifolds a branched analog of the more general notion of holomorphic Cartan geometries 
was introduced in \cite{BiD1}.

Our aim here is to introduce, and investigate, the branched analog of ${\rm SL}(r, 
{\mathbb C})$--opers.

Let $X$ be a compact connected Riemann surface, and let $S\, \subset\, X$ be a finite subset.
The holomorphic cotangent bundle and the structure sheaf of $X$ are denoted by $K_X$ respectively ${\mathcal O}_X$.

A {\it branched} ${\rm SL}(r, {\mathbb C})$--oper on $X$, with branching over $S$, is given by a triple of the
form $(V,\, {\mathcal F},\, D)$, where
\begin{itemize}
\item $V$ is a rank $r$ holomorphic vector bundle on $X$ such that $\bigwedge^r V\,=\, {\mathcal O}_X,$

\item ${\mathcal F}$ is a filtration of holomorphic subbundles
\begin{equation}\label{r1}
0\,=\, F_0\, \subset\, F_1 \, \subset\, F_2 \, \subset\,\cdots \, \subset\, F_{r-1}
\, \subset\, F_r\,=\, V
\end{equation}
with ${\rm rank}(F_i)\,=\, i$, for all $1\, \leq\, i\, \leq\, r$, and

\item $D$ is a holomorphic connection on $V$ such that $D(F_i)\, \subset\, F_{i+1}\otimes K_X$,
for all $1\, \leq\, i\, \leq\, r-1$, and $D$ induces the trivial connection on 
$\bigwedge^r V$,
\end{itemize}
such that for all $1\, \leq\, i\, \leq\, r-1$, the second fundamental form of $F_i$
vanishes exactly on the reduced divisor $S$ (see Definition \ref{def1}).

We recall that for a usual ${\rm SL}(r, {\mathbb C})$--oper, the second fundamental forms of the
filtration ${\mathcal F}$ are required to be nonzero everywhere.

For usual ${\rm SL}(r, {\mathbb C})$--opers, i.e., when $S\,=\, 0$, the underlying
holomorphic vector 
bundle $V$ supporting the holomorphic connection is very special. To explain this,
note that any holomorphic
line bundle $\xi$ on $X$ of order $r$ has a tautological holomorphic connection
$D_\xi$ which is uniquely determined by the condition that the connection on
$\xi^{\otimes r}\,=\,{\mathcal O}_X$ induced by $D_\xi$ has trivial monodromy. For any
usual ${\rm SL}(r, {\mathbb C})$--oper $(V,\, {\mathcal F},\, D)$, note that
$$
(V\otimes\xi,\, {\mathcal F}\otimes\xi,\, D\otimes{\rm Id}_\xi+{\rm Id}_V\otimes D_\xi)
$$
is also a usual ${\rm SL}(r, {\mathbb C})$--oper. If we fix a usual
${\rm SL}(r, {\mathbb C})$--oper
$(V_0,\, {\mathcal F}_0,\, D_0)$ on $X$, then for any usual ${\rm SL}(r, {\mathbb C})$--oper
$(V',\, {\mathcal F}',\, D')$ on $X$, there is a holomorphic line bundle $\xi$ on $X$ of
order $r$ such that $V'\,=\, V_0\otimes\xi$. Moreover, we have
$$
{\mathcal F}'\,=\, {\mathcal F}_0\otimes\xi\, ;
$$
so the filtration for a usual ${\rm SL}(r, {\mathbb C})$--oper also does 
not depend on the ${\rm SL}(r, {\mathbb C})$--oper. In the case of
usual ${\rm SL}(r, {\mathbb C})$--opers,
the underlying holomorphic 
vector bundle $V$ in \eqref{r1} is a jet bundle, and the filtration $\mathcal F$ is the 
natural filtration of a jet bundle.

However, for branched ${\rm SL}(r, {\mathbb C})$--opers 
the underlying vector bundle $V$ supporting the holomorphic connection depends more 
intimately on the oper, although each graded piece $F_{i+1}/F_i$ for the filtration 
${\mathcal F}$ in \eqref{r1} is actually independent of the branched ${\rm SL}(r, {\mathbb 
C})$--oper. More precisely, $F_{i+1}/F_i$ is independent of the branched ${\rm SL}(r, 
{\mathbb C})$--oper if $r$ is odd, and when $r$ is even, the quotients $F_{i+1}/F_i$ are 
determined by the theta characteristic on $X$ associated to the branched oper.

Denote the above quotient line bundle $F_r/F_{r-1}$ by $Q$. We prove that a branched ${\rm 
SL}(r, {\mathbb C})$--oper produces a logarithmic connection on $J^{r-1}(Q)\otimes 
{\mathcal O}_X(-(r-1)S)$, where $J^{r-1}(Q)$ is the $(r-1)$-th jet bundle of $Q$; see 
Proposition \ref{prop3}. Given the logarithmic connection on $J^{r-1}(Q)\otimes {\mathcal 
O}_X(-(r-1)S)$ associated to a branched ${\rm SL}(r, {\mathbb C})$--oper, the branched 
${\rm SL}(r, {\mathbb C})$--oper can actually be recovered using the Hecke 
transformations on $J^{r-1}(Q)\otimes {\mathcal O}_X(-(r-1)S)$
and the induced logarithmic connection on the Hecke transformations. The residues of these
logarithmic connection on $J^{r-1}(Q)\otimes{\mathcal O}_X(-(r-1)S)$, and also the 
local properties of the logarithmic connection, are studied here. Based on these studies, we 
characterize all logarithmic connections on $J^{r-1}(Q)\otimes {\mathcal O}_X(-(r-1)S)$ 
that arise from branched ${\rm SL}(r, {\mathbb C})$--opers; see Theorem \ref{thm1}.

The branched ${\rm SL}(r, {\mathbb C})$-opers are closely related
with the ${\rm SL}(r, {\mathbb C})$-opers with regular
singularity introduced by Beilinson and Drinfeld; this
is explained in Section \ref{se7}, which was kindly communicated to us by Edward Frenkel.

Once the branched ${\rm SL}(r, {\mathbb C})$--opers have been defined, it is straight-forward to
define the branched orthogonal and branched symplectic opers. We have omitted this exercise.

\section{Branched SL($r$)--opers}

As before, $X$ is a compact connected Riemann surface of genus $g$. The holomorphic tangent bundle of $X$
is denoted by $TX$.

Let $V$ be a holomorphic
vector bundle on $X$. A \textit{logarithmic connection} on $V$ singular over
a reduced effective divisor $S$ is a holomorphic differential operator
$$
D\, :\, V\, \longrightarrow\, V\otimes K_X\otimes {\mathcal O}_X(S)
$$
satisfying the Leibniz identity which states that
\begin{equation}\label{e1}
D(fs)\,=\, fD(s)+ s\otimes df
\end{equation}
for any locally
defined holomorphic function $f$ on $X$ and any locally defined holomorphic section $s$ of $V$. If
$S$ is the zero divisor, then $D$ is called a \textit{holomorphic connection}
\cite{At}. Any logarithmic connection on $V$ is integrable (same as flat) because $\Omega^2_X\,=\, 0$.
A criterion of Weil and Atiyah says that $V$ admits a holomorphic connection if and only if the degree of
every indecomposable component of $V$ is zero \cite[p.~203, Theorem 10]{At}, \cite{We}. In particular,
if $V$ is indecomposable, and $\text{degree}(V)\,=\, 0$, then $V$ admits a holomorphic connection.

Let $D$ be a logarithmic connection on $V$. For a holomorphic subbundle
$F\, \subset\, V$, consider the following composition of homomorphisms
\begin{equation}\label{ec1}
F\, \hookrightarrow\, V\, \stackrel{D}{\longrightarrow}\, V\otimes K_X\otimes
{\mathcal O}_X(S)\,\xrightarrow{\ q_0\otimes{\rm Id}_{K_X\otimes {\mathcal O}_X(S)}\ }
(V/F)\otimes K_X\otimes {\mathcal O}_X(S)\, ,
\end{equation}
where $q_0\,:\, V\longrightarrow\, V/F$ is the natural quotient map. From \eqref{e1} it 
follows that this composition of homomorphisms is actually ${\mathcal O}_X$--linear, and
hence it produces a holomorphic section
\begin{equation}\label{e2l}
{\rm SF}(D,\, F)\, \in\, H^0(X,\, \text{Hom}(F,\, V/F)\otimes K_X\otimes {\mathcal
O}_X(S))\,=\, H^0(X,\, F^*\otimes (V/F)\otimes K_X\otimes {\mathcal O}_X(S))\, ,
\end{equation}
which is known as the \textit{second fundamental form} of $F$ for the logarithmic connection $D$.

If $D$ is a holomorphic connection on $V$, then the composition of homomorphisms in
\eqref{ec1} simplifies as
\begin{equation}\label{ec}
F\, \hookrightarrow\, V\, \stackrel{D}{\longrightarrow}\, V\otimes K_X\,\xrightarrow{\ q_0\otimes{\rm Id}_{K_X}\ }
(V/F)\otimes K_X\, ,
\end{equation}
and it gives the following analogue of \eqref{e2l}
\begin{equation}\label{e2}
{\rm SF}(D,\, F)\, \in\, H^0(X,\, \text{Hom}(F,\, V/F)\otimes K_X)\,=\,
H^0(X,\, F^*\otimes K_X\otimes (V/F))\, ,
\end{equation}
which is the \textit{second fundamental form} of $F$ for the holomorphic connection $D$.

Assume that the rank $r$ of $V$ is at least two, and $D$ is, as before, a
holomorphic connection on $V$. Let
$$
0\,=\, F_0\, \subset\, F_1 \, \subset\, F_2 \, \subset\,\cdots \, \subset\, F_{r-1} \, \subset\, F_r\,=\, V
$$
be a filtration of holomorphic subbundles such that 
for all $1\, \leq\, i\, \leq\, r-1$,
\begin{itemize} \item ${\rm rank}(F_i)\,=\, i$, and

\item $D(F_i)\, \subset\, F_{i+1}\otimes K_X$.
\end{itemize}
Consequently, the second fundamental form ${\rm SF}(D,\, F_i)$ in \eqref{e2} satisfies the following condition:
$$
{\rm SF}(D,\, F_i)(F_i)\, \subset\, (F_{i+1}/F_i) \otimes K_X\,\subset\, (V/F_i) \otimes K_X
$$
for all $1\,\leq\, i\,\leq\, r-1$. Therefore, ${\rm SF}(D,\, F_i)$ produces a holomorphic homomorphism
\begin{equation}\label{e3} 
{\rm SF}(D,\, i)\, :\, F_i/F_{i-1} \,\longrightarrow\, (F_{i+1}/F_i) \otimes K_X
\end{equation}
for every $1\, \leq\, i\, \leq\, r-1$.

Assume that there is holomorphic line bundle ${\mathbf L}_0$ on $X$ such that the line bundle
$\text{Hom}(F_i/F_{i-1},\, (F_{i+1}/F_i) \otimes K_X)\,=\, (F_{i+1}/F_i)\otimes (F_i/F_{i-1})^*\otimes K_X$
is holomorphically isomorphic to ${\mathbf L}_0$ for all $1\, \leq\, i\, \leq\, r-1$. This
implies that
\begin{equation}\label{l0}
F_{i+1}/F_i \,=\, (F_i/F_{i-1})\otimes ({\mathbf L}_0\otimes TX)\,=\, (F_i/F_{i-1})\otimes {\mathbf L}\, ,
\end{equation}
where ${\mathbf L}\, :=\,{\mathbf L}_0\otimes TX$. Then we have
\begin{equation}\label{e4}
\det V\, :=\,
\bigwedge\nolimits^r V\,=\, \bigotimes_{j=1}^r F_j/F_{j-1}\,=\, F^{\otimes r}_1\otimes
{\mathbf L}^{^{\otimes\frac{r(r-1)}{2}}}
\,=\, (F_r/F_{r-1})^{\otimes r}\otimes\left({\mathbf L}^{^{\otimes \frac{r(r-1)}{2}}}\right)^*\, .
\end{equation}

Fix a reduced effective divisor
\begin{equation}\label{e5}
S\,:=\, \sum_{k=1}^d x_k
\end{equation}
on $X$ of degree $d\, \geq\, 0$; so $\{x_1,\, \cdots ,\, x_d\}$ are distinct points of $X$. We will
introduce branched $\text{SL}(r,{\mathbb C})$--opers on $X$ with branching over $S$.

When $r$ is an even integer, we assume that $d$ is also even. If $r$ is an even integer, fix a holomorphic
line bundle $\mathcal L$ on $X$ of degree $1 +\frac{d}{2}-g\, \in\, \mathbb Z$ such that
\begin{equation}\label{e6}
{\mathcal L}^{^{\otimes 2}}\,=\, TX\otimes {\mathcal O}_X(S)\, ;
\end{equation}
also, fix a holomorphic isomorphism of ${\mathcal L}^{^{\otimes 2}}$ with $TX\otimes {\mathcal O}_X(S)$.
Note that if $S$ is the zero divisor, then the dual ${\mathcal L}^*$ is a theta characteristic on $X$.

\begin{definition}\label{def1}
A \textit{branched} $\text{SL}(r,{\mathbb C})$--{\it oper} over $X$ with {\it branching} over $S$ is a triple
$$
(V,\, {\mathcal F},\, D)\, ,
$$
where
\begin{itemize}
\item $V$ is a rank $r$ holomorphic vector bundle on $X$ such that $\bigwedge^r V\,=\, {\mathcal O}_X$,

\item ${\mathcal F}$ is a filtration of holomorphic subbundles
\begin{equation}\label{ne1}
0\,=\, F_0\, \subset\, F_1 \, \subset\, F_2 \, \subset\,\cdots \, \subset\, F_{r-1} \, \subset\, F_r\,=\, V
\end{equation}
with ${\rm rank}(F_i)\,=\, i$, for all $1\, \leq\, i\, \leq\, r$, and

\item $D$ is a holomorphic connection on $V$,
\end{itemize}
such that the following five conditions hold:
\begin{enumerate}
\item if $r$ is even, then $F_1\,=\, \left({\mathcal L}^{^{\otimes (r-1)}}\right)^*$, where
$\mathcal L$ is a line bundle as in \eqref{e6}, and
if $r$ is odd, then $F_1\,=\,\left(\left(TX\otimes {\mathcal O}_X(S)\right)^{^{\otimes \frac{r-1}{2}}}\right)^*$,

\item $F_{j+1}/F_j\,\simeq\, F_1\otimes \left(TX\otimes {\mathcal O}_X(S)\right)^{\otimes j}$, for all
$0\, \leq\, j\, \leq\, r-1$,

\item the connection on $\bigwedge^r V\,=\, {\mathcal O}_X$ induced by the connection $D$ on $V$ coincides
with the trivial connection given by the de Rham differential $d$ on ${\mathcal O}_X$,

\item $D(F_i)\, \subset\, F_{i+1}\otimes K_X$, for all $1\, \leq\, i\, \leq\, r-1$, and

\item the homomorphism in \eqref{e3}
$$
{\rm SF}(D,\, i)\, :\, F_i/F_{i-1} \,\longrightarrow\, (F_{i+1}/F_i) \otimes K_X
$$
\begin{equation}\label{rc1}
=\, (F_i/F_{i-1})\otimes TX\otimes {\mathcal O}_X(S)\otimes K_X\,=\, (F_i/F_{i-1})\otimes{\mathcal O}_X(S)
\end{equation}
coincides with the natural inclusion map of $F_i/F_{i-1}$ into $(F_i/F_{i-1})\otimes{\mathcal O}_X(S)$. 
\end{enumerate}
\end{definition}

\begin{remark}\label{rem1}
Note that the above condition that $F_{j+1}/F_j\,=\, F_1\otimes \left(TX\otimes {\mathcal 
O}_X(S)\right)^{\otimes j}$ implies that $(F_{i+1}/F_i)\,=\, (F_i/F_{i-1})\otimes
TX\otimes{\mathcal O}_X(S)$. The fifth condition that ${\rm SF}(D,\, i)$ is the natural
inclusion map coincides with the condition that ${\rm SF}(D,\, i)$ is given by tensoring
with the section of ${\mathcal O}_X(S)$ defined by the constant function $1$ on $X$.
\end{remark}

\begin{remark}\label{rem2}
Comparing \eqref{rc1} and \eqref{l0} we conclude that in Definition \ref{def1},
the holomorphic line bundle ${\mathcal O}_X(S)$ plays the role of ${\mathbf L}_0$ in \eqref{l0}. So
in Definition \ref{def1}, the holomorphic line bundle ${\mathcal O}_X(S)\otimes TX$ plays the
role of ${\mathbf L}$ in \eqref{e4}. The second condition in Definition \ref{def1}, that
$F_{j+1}/F_j\,\simeq\, F_1\otimes \left(TX\otimes {\mathcal O}_X(S)\right)^{\otimes j}$,
is obtained from \eqref{l0}, and the first condition in Definition \ref{def1} is
motivated by \eqref{e4}, because $\bigwedge^r V\,=\, {\mathcal O}_X$.
\end{remark}

\begin{remark}\label{rem-ch}
Any two choices of the holomorphic line bundle $\mathcal L$ in \eqref{e6} differ by tensoring
with a line bundle on $X$ of order two. Any holomorphic line bundles $\xi$ of order two on $X$ has
a unique holomorphic connection $D_\xi$ satisfying the following condition: Any holomorphic isomorphism
between $\xi^{\otimes 2}$ and ${\mathcal O}_X$ takes the connection $D_\xi\otimes {\rm Id}_\xi+{\rm Id}_\xi
\otimes D_\xi$ on $\xi^{\otimes 2}$ to the connection on ${\mathcal O}_X$ given by the de Rham differential
$d$. If $W$ is a holomorphic vector bundle on $X$ equipped with a holomorphic connection $D_W$, then
$D_\xi$ and $D_W$ together produce a holomorphic connection $D_W\otimes {\rm Id}_\xi+{\rm Id}_W\otimes D_\xi$
on $W\otimes\xi$. When $r$ is even,
for any two choices of the holomorphic line bundle $\mathcal L$ in \eqref{e6}, a priori, we have two
different spaces of branched $\text{SL}(r,{\mathbb C})$--opers over $X$, with branching over $S$,
given by Definition \ref{def1}. Using the above observation we conclude that these two spaces are
actually canonically identified.
\end{remark}

When $S$ in \eqref{e5} is the zero divisor, a branched $\text{SL}(r,{\mathbb C})$--oper is a 
usual $\text{SL}(r,{\mathbb C})$--oper \cite{BD1}, \cite{BD2}. In that special (unbranched) 
case, the underlying holomorphic vector bundle $V$ of an $\text{SL}(r,{\mathbb C})$--oper 
does not depend on the oper in the following sense: The vector bundle $V$ is the jet bundle
$J^{r-1}({\mathcal L}^{^{\otimes (r-1)}}\otimes\xi)$, where ${\mathcal L}$ is a fixed
holomorphic line bundle on $X$ such that ${\mathcal L}^2\,=\, TX$ while
$\xi$ is a holomorphic line bundle on $X$ of order $r$.

Contrary to the unbranched case, 
when $\text{degree}(S)\,=\, d\, >\, 0$, the underlying holomorphic vector 
bundle $V$ of a $\text{SL}(r,{\mathbb C})$--oper with branching over $S$ generally depends on the 
branched oper. Nevertheless the successive quotients for the filtration $\mathcal F$ of 
$V$ are clearly independent of the branched 
$\text{SL}(r,{\mathbb C})$--oper.
In Section \ref{se3} we will show that a branched $\text{SL}(r,{\mathbb 
C})$--oper $(V,\, {\mathcal F},\, D)$ with branching over $S$ gives rise to a logarithmic 
connection on a certain holomorphic vector bundle over $X$ obtained by performing Hecke 
transformation on $V$ over the points of $S$. The holomorphic vector bundle in question
turns out to be actually independent of the branched $\text{SL}(r,{\mathbb C})$--oper.

Take a branched $\text{SL}(r,{\mathbb C})$--oper $(V,\, {\mathcal F},\, D)$ over $X$ with branching over $S$.
Let $D^*$ be the holomorphic connection on the dual vector bundle $V^*$ induced by the connection $D$. The filtration
$\mathcal F$ of $V$ in \eqref{ne1}
produces a filtration of $V^*$ by holomorphic subbundles as follows: Consider the dual homomorphisms of the
inclusion maps in \eqref{ne1}
$$
V^*\, =\, F^*_r \,\twoheadrightarrow\, F^*_{r-1}\,\twoheadrightarrow\,\cdots \,\twoheadrightarrow\, F^*_2 \,
\twoheadrightarrow\, F^*_1 \, \twoheadrightarrow\, F^*_0\,=\, 0\, ;
$$
the subbundles of $V^*$ giving the filtration are the kernels of the above projections $V^*\,\twoheadrightarrow\, F^*_i$.
This filtration by holomorphic subbundles of $V^*$ will be denoted by ${\mathcal F}^*$.

\begin{lemma}\label{lem1}
The above triple $(V^*,\, {\mathcal F}^*,\, D^*)$ is a branched ${\rm SL}(r,{\mathbb C})$--oper
over $X$ with branching over $S$.
\end{lemma}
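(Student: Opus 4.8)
The plan is to verify that the triple $(V^*,\, {\mathcal F}^*,\, D^*)$ satisfies each of the five conditions in Definition \ref{def1}, making essential use of the fact that $(V,\, {\mathcal F},\, D)$ already does. First I would set up the identification of the successive quotients of ${\mathcal F}^*$. Writing $F^\perp_i\, \subset\, V^*$ for the $i$-th subbundle of ${\mathcal F}^*$ (the kernel of $V^*\,\twoheadrightarrow\, F^*_{r-i}$), there is a canonical isomorphism $F^\perp_i/F^\perp_{i-1}\,\simeq\, (F_{r-i+1}/F_{r-i})^*$. Thus the graded pieces of ${\mathcal F}^*$ are the duals of the graded pieces of ${\mathcal F}$ taken in reverse order. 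Using condition (2) for $(V,\, {\mathcal F},\, D)$, namely $F_{j+1}/F_j\,\simeq\, F_1\otimes (TX\otimes {\mathcal O}_X(S))^{\otimes j}$, together with the constraint $\bigwedge^r V\,=\, {\mathcal O}_X$, a direct computation shows that the first subbundle $F^\perp_1\,=\, (V/F_{r-1})^*\,=\, (F_r/F_{r-1})^*$ has the required form for condition (1), and that the graded pieces satisfy condition (2) again with the \emph{same} line bundle $TX\otimes {\mathcal O}_X(S)$. This is essentially the self-duality of the numerical data, and the key is checking that $F_1$ for the dual filtration agrees with the prescribed value in Definition \ref{def1}; here the relation $\det V\,=\, {\mathcal O}_X$ forces the two expressions to match.

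Next I would address condition (3). Since $D^*$ is the connection induced on $V^*$ by $D$, the induced connection on $\bigwedge^r V^*\,=\, (\bigwedge^r V)^*\,=\, {\mathcal O}_X$ is dual to the connection induced by $D$ on $\bigwedge^r V\,=\, {\mathcal O}_X$. Because the latter is the trivial de Rham connection by condition (3) for $(V,\, {\mathcal F},\, D)$, its dual is again the trivial connection on ${\mathcal O}_X$, so condition (3) holds for $D^*$. Condition (4), the Griffiths transversality $D^*(F^\perp_i)\,\subset\, F^\perp_{i+1}\otimes K_X$, I would derive by dualizing: for the connection $D$, transversality $D(F_j)\,\subset\, F_{j+1}\otimes K_X$ is equivalent to the statement that the second fundamental form ${\rm SF}(D,\, F_j)$ lands in $(F_{j+1}/F_j)\otimes K_X$ rather than the full $(V/F_j)\otimes K_X$; passing to the dual connection reverses the filtration and the transversality condition is preserved. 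Concretely, for a holomorphic connection $D$ and the annihilator $F^\perp$ of a subbundle $F$, one has the general identity that the second fundamental form of $F^\perp$ for $D^*$ is (up to sign and the canonical identifications) the transpose of the second fundamental form of $F$ for $D$; I would spell this out at the level of the homomorphism in \eqref{e3}.

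The main work, and the step I expect to be the real obstacle, is condition (5): I must show that the induced homomorphism ${\rm SF}(D^*,\, i)\,:\, F^\perp_i/F^\perp_{i-1}\,\longrightarrow\, (F^\perp_{i+1}/F^\perp_i)\otimes K_X$ is \emph{exactly} the natural inclusion after the identification in \eqref{rc1} — not merely nonzero, but equal to the canonical section ``$1$'' of ${\mathcal O}_X(S)$ as in Remark \ref{rem1}. The subtlety is that dualizing a priori produces the transpose of ${\rm SF}(D,\, r-i)$, and while the transpose of an isomorphism given by the canonical section $1$ of ${\mathcal O}_X(S)$ is again such a canonical section, I must check that no extra scalar or sign is introduced by the chain of canonical identifications $F^\perp_i/F^\perp_{i-1}\,\simeq\,(F_{r-i+1}/F_{r-i})^*$ and the isomorphism $F_{r-i+1}/F_{r-i}\,\simeq\, (F_{r-i}/F_{r-i-1})\otimes TX\otimes {\mathcal O}_X(S)$. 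The plan is to work in a local holomorphic frame adapted to the filtration near a branch point $x_k$, write $D$ in terms of the connection matrix, read off that the relevant off-diagonal entry of the connection form is the canonical section $1$ times the local defining section of $S$, and verify that the corresponding entry for $D^*$ (which is minus the transpose of the connection matrix) produces, after the canonical identifications, again the section $1$. The sign from the transpose is absorbed by the reversal of the filtration, so the net effect is the identity inclusion, as required; once this local computation is pinned down, the lemma follows.
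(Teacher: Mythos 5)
Your overall strategy is the same as the paper's: everything is reduced to the duality between the second fundamental form of a subbundle $F\,\subset\, V$ for $D$ and that of its annihilator $(V/F)^*\,\subset\, V^*$ for $D^*$ (this is precisely the identity \eqref{es} on which the paper's proof rests), together with the bookkeeping for the graded pieces, which you do correctly: $F^\perp_i/F^\perp_{i-1}\,\simeq\,(F_{r-i+1}/F_{r-i})^*$, conditions (1) and (2) of Definition \ref{def1} follow from the self-duality of the numerical data, condition (3) from the fact that the dual of the trivial connection on ${\mathcal O}_X$ is again trivial, and condition (4) by differentiating $\langle\phi,\,s\rangle\,=\,0$.

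There is, however, one concrete error, and it sits exactly at the step you yourself single out as the crux. The local computation you propose does \emph{not} give the conclusion you assert. With the standard convention $d\langle\phi,\,s\rangle\,=\,\langle D^*\phi,\,s\rangle+\langle\phi,\,Ds\rangle$, for $\phi$ a local section of $(V/F)^*$ and $s$ a local section of $F$ one gets $\langle D^*\phi,\,s\rangle\,=\,-\langle\phi,\,Ds\rangle$, so the second fundamental form of the annihilator is \emph{minus} the transpose of ${\rm SF}(D,\,F)$. In an adapted frame $e_1,\,\cdots,\,e_r$ with connection matrix $A$ (subdiagonal entries $\gamma_i\,=\,A_{i+1,i}$), passing to the reversed dual frame $f_k\,=\,e^{r-k+1}$ gives for $D^*$ the subdiagonal entry $-A_{r-k+1,r-k}\,=\,-\gamma_{r-k}$ at step $k$: the reversal of the filtration relabels indices but does nothing to remove the minus sign. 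So the honest outcome of your computation is ${\rm SF}(D^*,\,i)\,=\,-1$ times the natural inclusion, which literally contradicts condition (5). The missing (easy) idea is that the isomorphisms in condition (2) of Definition \ref{def1} are only required to \emph{exist}; they are not canonically fixed. Replacing the identification of the $k$-th graded piece $F^\perp_{k+1}/F^\perp_k\,\simeq\, F^\perp_1\otimes(TX\otimes{\mathcal O}_X(S))^{\otimes k}$ by $(-1)^k$ times itself multiplies each step identification $F^\perp_{k+1}/F^\perp_k\,\simeq\,(F^\perp_k/F^\perp_{k-1})\otimes TX\otimes{\mathcal O}_X(S)$ by $-1$, and hence turns $-1\cdot({\rm inclusion})$ into exactly the inclusion demanded by condition (5) and Remark \ref{rem1}. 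To be fair, the paper's own proof states \eqref{es} without this sign and declares the rest straightforward, so it glosses over the same point; but since you made the sign the explicit pivot of your argument, the claim that it is ``absorbed by the reversal of the filtration'' is a step that would fail as written, and the renormalization of the identifications is the needed repair.
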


\begin{proof}
As in \eqref{ec}, take a holomorphic subbundle $F\, \subset\, V$. So we have the holomorphic subbundle
$(V/F)^*\, \subset\, V^*$. Note that the quotient bundle $V^*/((V/F)^*)$ is identified with $F^*$. Let
$$
{\rm SF}(D^*,\, (V/F)^*)\, \in\, H^0(X,\, \text{Hom}((V/F)^*,\, V^*/((V/F)^*))\otimes K_X)
$$
$$
=\,H^0(X,\, \text{Hom}((V/F)^*,\, F^*)\otimes K_X)\,=\, H^0(X,\, (V/F)\otimes F^*\otimes K_X)
$$
be the second fundamental form of $(V/F)^*\, \subset\, V^*$ for the connection $D^*$ on $V^*$ (see \eqref{e2}).
Then it is straightforward to check that
\begin{equation}\label{es}
{\rm SF}(D^*,\, (V/F)^*)\,=\, {\rm SF}(D,\, F)\, ,
\end{equation}
where ${\rm SF}(D,\, F)$ is constructed in \eqref{e2}. The lemma is a straightforward consequence of \eqref{es}.
\end{proof}

\subsection{Examples}

We now give some examples of branched ${\rm SL}(r,{\mathbb C})$--oper.

Let
$$
\varpi\, :\, X\, \longrightarrow\, Y
$$
be a nonconstant holomorphic map between compact connected Riemann surfaces such that
all the branch points of $\varpi$ have branch number $1$, and let
$(V,\, {\mathcal F},\, D)$ be a usual ${\rm SL}(r,{\mathbb C})$--oper over $Y$. Then the pullback
$(\varpi^*V,\, \varpi^*{\mathcal F},\, \varpi^*D)$ is a branched ${\rm SL}(r,{\mathbb C})$--oper over $X$.
The branching divisor for $(\varpi^*V,\, \varpi^*{\mathcal F},\, \varpi^*D)$ coincides with
the branching divisor for the map $\varpi$.

Let $\varpi\, :\, X\, \longrightarrow\, {\mathbb C}{\mathbb P}^1$ be any nonconstant holomorphic map
such that all the branch points of $\varpi$ have branch number $1$.
Then pulling back the standard ${\rm SL}(2,{\mathbb C})$--oper on ${\mathbb C}{\mathbb P}^1$ by $\varpi$
we get a branched ${\rm SL}(2,{\mathbb C})$--oper over $X$.

More generally, let $V$ be a rank two holomorphic vector
bundle on $X$ with $\bigwedge^2 V\,=\, {\mathcal O}_X$, and let $D$ be a holomorphic connection on
$V$ such that the induced holomorphic connection on $\bigwedge^2 V$
coincides with the holomorphic connection
on ${\mathcal O}_X$ given by the de Rham differential $d$. Let $L\, \subset\, V$ be a holomorphic line subbundle
satisfying the following two conditions:
\begin{enumerate}
\item the second fundamental form ${\rm SF}(D,\, L)$ is not identically zero, and

\item all the zeros of ${\rm SF}(D,\,L)$ are of order one.
\end{enumerate}
Then $(V,\, L,\, D)$ is a branched ${\rm SL}(2,{\mathbb C})$--oper on $X$. The branching divisor for
$(V,\, L,\, D)$ coincides with the vanishing
divisor of ${\rm SF}(D,\,L)$. The previous example, given by the pull-back of the standard
${\rm SL}(2,{\mathbb C})$--oper on ${\mathbb C}{\mathbb P}^1$, actually corresponds
to the special case where $V\,=\, \varpi^*{\mathcal O}^{\oplus 2}_{{\mathbb C}{\mathbb P}^1}
\,=\, {\mathcal O}^{\oplus 2}_X$, $D$ is the trivial connection on it, and $L$ is the
pull-back, by $\varpi$, of the tautological line subbundle of ${\mathcal O}^{\oplus 2}_{{\mathbb C}{\mathbb P}^1}$.

Let $(V,\, L,\, D)$ be a branched ${\rm SL}(2,{\mathbb C})$--oper over $X$. Then the holomorphic line subbundle
$L\, \subset\, V$ produces a filtration of holomorphic
subbundles $\{F_i\}_{i=1}^r$ of the symmetric product $\text{Sym}^{r-1}(V)$ whose $i$-th
term $F_i$ is the image of $L^{\otimes (r-i)}\otimes V^{\otimes (i-1)}$ in $\text{Sym}^{r-1}(V)$. The triple
$(\text{Sym}^{r-1}(V),\, \{F_i\}_{i=1}^r,\, \widetilde{D})$, where $\widetilde{D}$ is the
holomorphic connection on $\text{Sym}^{r-1}(V)$ induced by $D$, is a branched ${\rm SL}(r,{\mathbb C})$--oper over $X$.
Its branching divisor coincides with that of $(V,\, L,\, D)$.

\section{Logarithmic connection from branched SL($r$)--opers}\label{se3}

\subsection{Homomorphism to a jet bundle}

As in Definition \ref{def1}, let
\begin{equation}\label{gbo}
(V,\, {\mathcal F},\, D)
\end{equation}
be a branched $\text{SL}(r,{\mathbb C})$--oper over $X$, with branching over $S$, where
${\mathcal F}$ stands for the filtration in \eqref{ne1} of holomorphic subbundles of $V$.

In this subsection we will construct another filtered holomorphic vector bundle from the 
branched $\text{SL}(r,{\mathbb C})$--oper $(V,\, {\mathcal F},\, D)$ in \eqref{gbo}.

For notational convenience, let
\begin{equation}\label{e7}
Q\,:=\, F_r/F_{r-1}\,=\, F_1\otimes \left(TX\otimes {\mathcal O}_X(S)\right)^{\otimes (r-1)}
\end{equation}
denote the quotient line bundle in \eqref{ne1}; the second statement in Definition \ref{def1} gives
the above isomorphism. We will construct a holomorphic homomorphism
\begin{equation}\label{e8}
\Phi\, :\, V\, \longrightarrow\, J^{r-1}(Q)
\end{equation}
to the $(r-1)$-th order jet bundle of the line bundle $Q$ in \eqref{e7}; see \cite{Gu}, \cite{BiD2}
for jet bundles.

Let
\begin{equation}\label{e9}
q\, :\, V \, \longrightarrow\,Q\,=\, V/F_{r-1}
\end{equation}
be the natural quotient map. Take any $x\, \in\, X$ and also take any $v\, \in\, V_x$. Consider a simply
connected open neighborhood $x\, \in\, U\, \subset\, X$ of $x$, and denote by
$$
\widetilde{v}\, \in\, H^0(U,\, V)
$$
the unique flat section of $V\big\vert_U$, for the connection $D$, such that $\widetilde{v}(x)\,=\, v$.
Restricting the section $q(\widetilde{v}) \, \in\, H^0(U,\, Q)$ to the $(r-1)$-th order infinitesimal
neighborhood of $x$, where $q$ is the quotient map in \eqref{e9}, we get an element
$$
\widetilde{v}'\, :=\, q(\widetilde{v})\big\vert_{rx} \, \in\, J^{r-1}(Q)_x\, ,
$$
where $rx$ is the nonreduced divisor with multiplicity $r$.
The map $\Phi$ in \eqref{e8} sends $v$ to this element $\widetilde{v}'\, \in\, J^{r-1}(Q)_x$
constructed from $v$ using the connection $D$. The homomorphism $\Phi$ is evidently holomorphic.

We have the natural short exact sequence of jet bundles
\begin{equation}\label{je}
0\,\longrightarrow\,Q\otimes K^{\otimes k}_X\,\longrightarrow\,J^{k}(Q)\,
\longrightarrow\, J^{k-1}(Q) \, \longrightarrow\, 0
\end{equation}
for all $k\, \geq\, 1$. For $0\, \leq\, j\, \leq\, r$, let $H_j$ be the kernel of the projection
$J^{r-1}(Q)\,\longrightarrow\, J^{r-1-j}(Q)$ obtained by iterating the projection in \eqref{je}; we use
the convention that $J^k(W)\,=\, 0$ if $k\, <\, 0$. So ${\rm rank}(H_j)\,=\, j$, and we have the
short exact sequence of holomorphic vector bundles
\begin{equation}\label{e10}
0\, \longrightarrow\, H_j \, \longrightarrow\, J^{r-1}(Q)\,\longrightarrow\, J^{r-1-j}(Q) \, \longrightarrow\, 0
\end{equation}
on $X$. From \eqref{e10} it follows that the quotient $H_j/H_{j-1}$ coincides with the kernel of
the projection $J^{r-j}(Q)\,\longrightarrow\, J^{r-j-1}(Q)$. Therefore, from \eqref{je} it follows that
\begin{equation}\label{e10a}
H_j/H_{j-1}\,=\, Q\otimes K^{\otimes (r-j)}_X
\end{equation}
for all $1\, \leq\, j\, \leq\, r$.

Let
\begin{equation}\label{e11}
X_0\, :=\, X\setminus \{x_1,\, \cdots,\, x_d\}\,=\, X\setminus S\, \subset\, X
\end{equation}
be the complement of $S$ in \eqref{e5}.

The triple
$(V,\, {\mathcal F},\, D)$ in \eqref{gbo} defines a usual $\text{SL}(r,{\mathbb C})$--oper
over the open subset $X_0\, \subset\, X$ in \eqref{e11}.
The following proposition is well-known for the usual $\text{SL}(r,{\mathbb C})$--opers.

\begin{proposition}\label{prop1}
The restriction $\Phi\big\vert_{X_0}$ of the homomorphism in \eqref{e8} to $X_0$
is an isomorphism $V\big\vert_{X_0}\, \stackrel{\sim}{\longrightarrow}\, J^{r-1}(Q)\big\vert_{X_0}$.

For all $0\, \leq\, j\, \leq\, r$, the homomorphism $\Phi\big\vert_{X_0}$ takes the subbundle $F_j\big\vert_{X_0}$
(defined in \eqref{ne1}) isomorphically to the subbundle $H_j\big\vert_{X_0}$ (constructed in \eqref{e10}).
\end{proposition}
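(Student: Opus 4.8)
The plan is to prove both assertions simultaneously by showing that $\Phi$ is a morphism of \emph{filtered} holomorphic vector bundles which restricts to a filtered isomorphism over $X_0$. Concretely, I would first verify that the inclusion $\Phi(F_j)\,\subseteq\, H_j$ holds over all of $X$, and then compute the induced homomorphisms on the associated graded pieces
\[
{\rm gr}_j\Phi\, :\, F_j/F_{j-1}\,\longrightarrow\, H_j/H_{j-1}\,=\,Q\otimes K_X^{\otimes(r-j)}
\]
(recall \eqref{e10a}), showing that each ${\rm gr}_j\Phi$ is an isomorphism exactly over $X_0\,=\,X\setminus S$. Once this is done, a morphism of filtered bundles all of whose graded pieces are isomorphisms over $X_0$ is itself an isomorphism over $X_0$, carrying $F_j\big\vert_{X_0}$ onto $H_j\big\vert_{X_0}$ for every $j$; this yields both statements at once.

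To carry out the local computation, fix $x\,\in\, X$, a coordinate $t$ centred at $x$, and a holomorphic frame $e_1,\,\cdots,\,e_r$ of $V$ near $x$ adapted to the filtration, so that $e_1,\,\cdots,\,e_i$ trivialize $F_i$. Write $D\,=\,d+A\,dt$ for the connection matrix $A\,=\,(A_{ki})$ in this frame. The Griffiths transversality condition $D(F_i)\,\subseteq\, F_{i+1}\otimes K_X$ (the fourth condition of Definition \ref{def1}) forces $A$ to be of Hessenberg type, with $A_{ki}\,=\,0$ whenever $k\,>\,i+1$. For $v\,\in\, V_x$ the flat section $\widetilde v\,=\,\sum_i f_i e_i$ satisfies the linear system $f'\,=\,-Af$, and under the natural identification of $J^{r-1}(Q)_x$ with the Taylor data $(f_r(x),\,f_r'(x),\,\cdots,\,f_r^{(r-1)}(x))$ of $q(\widetilde v)\,=\,f_r\,\overline{e}_r$, the subbundle $H_j$ is cut out by the vanishing of the lowest $r-j$ coefficients. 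A direct induction on the order of differentiation, using the Hessenberg shape of $A$, then shows that $v\,\in\,(F_j)_x$ forces $q(\widetilde v)$ to vanish to order at least $r-j$ at $x$, which is precisely the inclusion $\Phi(F_j)\,\subseteq\, H_j$ over all of $X$.

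The heart of the argument, and the step I expect to be the main obstacle, is the intrinsic identification of ${\rm gr}_j\Phi$. Tracking the leading surviving Taylor coefficient $f_r^{(r-j)}(x)$ through the system $f'\,=\,-Af$ shows that the only entries of $A$ that contribute are the subdiagonal ones $A_{i+1,i}$, and these are exactly the second fundamental forms: under the identifications of \eqref{e3}, $A_{i+1,i}$ represents ${\rm SF}(D,\,i)$. Consequently ${\rm gr}_j\Phi$ is the iterated composition
\[
F_j/F_{j-1}\,\xrightarrow{\ {\rm SF}(D,\,j)\ }\,(F_{j+1}/F_j)\otimes K_X\,\longrightarrow\,\cdots\,\longrightarrow\,(F_r/F_{r-1})\otimes K_X^{\otimes(r-j)}\,=\,Q\otimes K_X^{\otimes(r-j)}\, ,
\]
where each arrow is a second fundamental form tensored with the identity of an appropriate power of $K_X$. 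Making this identification precise---keeping the jet-bundle trivializations, the coordinate $t$, and the isomorphisms \eqref{l0} and \eqref{e10a} mutually consistent, and checking that the lower-order corrections in the jet identification do not contaminate the top graded piece---is where the real bookkeeping lies.

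Finally I would invoke the fifth condition of Definition \ref{def1}: each ${\rm SF}(D,\,i)$ is the natural inclusion $F_i/F_{i-1}\,\hookrightarrow\,(F_i/F_{i-1})\otimes{\mathcal O}_X(S)$, i.e. multiplication by the canonical section of ${\mathcal O}_X(S)$ whose zero divisor is exactly $S$. Hence ${\rm gr}_j\Phi$ is, up to the identifications above, multiplication by the canonical section of ${\mathcal O}_X((r-j)S)$, which is an isomorphism precisely over $X_0$ and vanishes on $S$ (this also explains geometrically why $\Phi$ fails to be an isomorphism over $S$). Since every graded piece is an isomorphism over $X_0$, the filtered morphism $\Phi\big\vert_{X_0}$ is an isomorphism taking $F_j\big\vert_{X_0}$ to $H_j\big\vert_{X_0}$ for every $0\,\leq\, j\,\leq\, r$, completing the proof.
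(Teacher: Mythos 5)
Your proof is correct, but note that it supplies considerably more than the paper itself does: the paper gives no proof of Proposition \ref{prop1} at all, observing only that $(V,\,{\mathcal F},\,D)$ restricts to a usual ${\rm SL}(r,{\mathbb C})$--oper over $X_0$ and that the statement is the well-known description of unbranched oper bundles as jet bundles of $Q$. Your argument is instead a self-contained local computation: the adapted frame, the Hessenberg shape of the connection matrix forced by Griffiths transversality, and the induction on derivatives of the flat-section equation $f'\,=\,-Af$. Two remarks on how this interacts with the paper's logic. First, you prove the inclusion $\Phi(F_j)\,\subseteq\, H_j$ directly over all of $X$, whereas the paper obtains it (Corollary \ref{cor1}) as a consequence of Proposition \ref{prop1} by density of $X_0$; your route reverses the paper's order of deduction, which is harmless and arguably cleaner. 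Second, your identification of ${\rm gr}_j\Phi$ with the iterated composition of second fundamental forms is exactly the mechanism underlying the paper's Proposition \ref{prop2}, so your proof subsumes that statement as well; the only caveat is the normalization constants (a sign $(-1)^{r-j}$ and factorials coming from the jet-bundle conventions) that you sweep into ``the identifications above'' --- but these are nonzero scalars, so they cannot affect where ${\rm gr}_j\Phi$ is an isomorphism, and the conclusion stands. What the paper's route buys is brevity, by quoting the unbranched theory over $X_0$; what yours buys is a uniform, explicitly local treatment that proves Proposition \ref{prop1}, Corollary \ref{cor1} and Proposition \ref{prop2} in one pass, and makes visible exactly how $\Phi$ degenerates along $S$.
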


{}From Proposition \ref{prop1} it follows that when $S$ is the zero divisor, then
$$
F_r \,=\, V\,=\, J^{r-1}(Q)\, .
$$

The following is a consequence of Proposition \ref{prop1}.

\begin{corollary}\label{cor1}
The homomorphism of ${\mathcal O}_X$--modules $\Phi$ in \eqref{e8} is injective.
It satisfies the condition
$$
\Phi(F_j)\, \subset\, H_j
$$
for all $0\, \leq\, j\, \leq\, r$, where $F_j$ and $H_j$ are defined in \eqref{ne1} and
\eqref{e10} respectively.
\end{corollary}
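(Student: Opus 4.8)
The plan is to deduce Corollary \ref{cor1} directly from Proposition \ref{prop1} together with the fact that $\Phi$ is a morphism of holomorphic vector bundles over all of $X$, not just over $X_0$. The essential point is that injectivity and the inclusion $\Phi(F_j)\subset H_j$ are both closed conditions that we already know to hold over the dense open subset $X_0=X\setminus S$, so they propagate to the finitely many points of $S$ by continuity/coherence arguments.

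First I would establish injectivity of $\Phi$ as a homomorphism of ${\mathcal O}_X$-modules. By Proposition \ref{prop1}, the restriction $\Phi\big\vert_{X_0}$ is an isomorphism, hence in particular injective on the dense open set $X_0$. Let $\mathcal{K}\,:=\,\ker\Phi$, a coherent subsheaf of the locally free sheaf $V$. Since $V$ is torsion-free (being locally free) and $\mathcal{K}\big\vert_{X_0}\,=\,0$, the subsheaf $\mathcal{K}$ is supported on the finite set $S$; but a nonzero coherent subsheaf of a torsion-free sheaf cannot have zero-dimensional support on a smooth curve. Concretely, any local section of $\mathcal{K}$ near a point $x_k\in S$ would be a section of $V$ killed by $\Phi$, hence a torsion section of $V$ once we know it vanishes on a punctured neighborhood; since $V$ is locally free, it has no torsion, so $\mathcal{K}\,=\,0$ and $\Phi$ is injective.

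Next I would verify $\Phi(F_j)\subset H_j$ for each $0\le j\le r$. Again by Proposition \ref{prop1}, over $X_0$ we have $\Phi(F_j\big\vert_{X_0})\,=\,H_j\big\vert_{X_0}$, so in particular $\Phi(F_j)$ and $H_j$ agree over $X_0$ as subsheaves of $J^{r-1}(Q)$. To promote this to an inclusion of subbundles over all of $X$, consider the composition of $\Phi\big\vert_{F_j}$ with the quotient projection $J^{r-1}(Q)\,\longrightarrow\, J^{r-1}(Q)/H_j\,=\,J^{r-1-j}(Q)$ coming from \eqref{e10}. This composite is a holomorphic homomorphism $F_j\,\longrightarrow\,J^{r-1-j}(Q)$ of vector bundles on $X$ which vanishes identically on the dense open set $X_0$. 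A holomorphic bundle homomorphism vanishing on a dense open subset vanishes everywhere, so the composite is zero on all of $X$, which is exactly the statement $\Phi(F_j)\subset H_j$.

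I expect the only delicate point to be the injectivity argument at the points of $S$: one must be careful that $\Phi$ is genuinely a morphism of sheaves defined across $S$ (which it is, by construction in \eqref{e8}, since the flat-section-and-restrict-to-jet procedure makes sense at every point of $X$) and that the kernel, being a subsheaf of the locally free, hence torsion-free, sheaf $V$, cannot be supported on a proper closed subset. The second claim is then essentially formal, being the identity principle for holomorphic sections applied to the composite homomorphism $F_j\,\longrightarrow\,J^{r-1-j}(Q)$.
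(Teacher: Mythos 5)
Your proposal is correct and follows essentially the same route as the paper: both deduce injectivity and the inclusion $\Phi(F_j)\,\subset\, H_j$ from Proposition \ref{prop1} over $X_0$ and then extend across the finitely many points of $S$ using density. The paper states these extensions as immediate, while you supply the standard justifications (the kernel would be a torsion subsheaf of the locally free sheaf $V$, and the composite $F_j\,\longrightarrow\, J^{r-1-j}(Q)$ vanishes identically by the identity principle), which are exactly the arguments the paper leaves implicit.
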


\begin{proof}
Since $\Phi\big\vert_{X_0}$ is injective it follows immediately that $\Phi$ is injective.
Since $$\Phi\big\vert_{X_0}(F_j\big\vert_{X_0})\, \subset\, H_j\big\vert_{X_0}$$ (see
Proposition \ref{prop1}), and $X_0$ is a dense subset
of $X$, we conclude that $\Phi(F_j)\, \subset\, H_j$.
\end{proof}

{}From Corollary \ref{cor1} it follows that $\Phi$ produces a grading preserving
holomorphic homomorphism from the graded
vector bundle $\bigoplus_{j=1}^r F_j/F_{j-1}$ in \eqref{ne1} to the graded
vector bundle $\bigoplus_{j=1}^r H_j/H_{j-1}$ in \eqref{e10}. For any $1\, \leq\, j\, \leq\, r$, let
\begin{equation}\label{e13}
\Phi_j\, :\, F_j/F_{j-1}\,\longrightarrow\, H_j/H_{j-1}\,=\, Q\otimes K^{\otimes (r-j)}_X
\end{equation}
be the homomorphism induced by $\Phi$; see \eqref{e10a} for the isomorphism in \eqref{e13}.

For any $1\, \leq\, j\, \leq\, r$, note that $Q\otimes K^{\otimes (r-j)}_X\otimes
{\mathcal O}_X(-(r-j)S)$ is actually a subsheaf 
of $Q\otimes K^{\otimes (r-j)}_X$ because $S$ is an effective divisor. The following proposition
identifies this subsheaf $Q\otimes K^{\otimes (r-j)}_X\otimes
{\mathcal O}_X(-(r-j)S)$ with the image of the homomorphism $\Phi_j$.

\begin{proposition}\label{prop2}
For each $1\, \leq\, j\, \leq\, r$, the image of the homomorphism $\Phi_j$ in \eqref{e13}
is the subsheaf
$$
Q\otimes K^{\otimes (r-j)}_X\otimes {\mathcal O}_X(-(r-j)S)\, \subset\,
Q\otimes K^{\otimes (r-j)}_X\,=\, H_j/H_{j-1}\, .
$$
\end{proposition}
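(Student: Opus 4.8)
The plan is to reduce the statement to a local computation of the order of vanishing of the line bundle homomorphism $\Phi_j$ at the points of $S$, and to identify this homomorphism explicitly with the composite of the second fundamental forms ${\rm SF}(D,\,j),\,\ldots,\,{\rm SF}(D,\,r-1)$.

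First I would record the global shape of $\Phi_j$. Both $F_j/F_{j-1}$ and $H_j/H_{j-1}\,=\,Q\otimes K_X^{\otimes(r-j)}$ are line bundles. By Proposition \ref{prop1}, $\Phi\big\vert_{X_0}$ is an isomorphism carrying $F_j\big\vert_{X_0}$ onto $H_j\big\vert_{X_0}$, so the induced map $\Phi_j\big\vert_{X_0}$ is an isomorphism; in particular $\Phi_j$ is a nonzero homomorphism of line bundles. Hence its image is of the form $Q\otimes K_X^{\otimes(r-j)}\otimes {\mathcal O}_X(-E_j)$, where $E_j$ is an effective divisor whose support is contained in $S\,=\,X\setminus X_0$. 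Thus everything reduces to showing that $\Phi_j$ vanishes to order exactly $r-j$ at each point $x_k\, \in\, S$.

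To compute this order I would set up a local model near a point $x_k\,\in\, S$. Choose a holomorphic coordinate $z$ centered at $x_k$ and a holomorphic frame $e_1,\,\ldots,\,e_r$ of $V$ adapted to the filtration, so that $F_i$ is spanned by $e_1,\,\ldots,\,e_i$ and $q(e_r)$ generates $Q$. Writing $D\,=\,d+A\,dz$, the condition $D(F_i)\,\subset\, F_{i+1}\otimes K_X$ forces $A$ to be lower Hessenberg, i.e. $A_{mi}\,=\,0$ whenever $m\,>\,i+1$, and the subdiagonal entry $A_{i+1,i}$ is exactly the local expression of ${\rm SF}(D,\,i)$. Now $\Phi_j$ is computed, as in the construction of $\Phi$, from the jet of $q(\widetilde{v})$ along flat sections: if $w\,=\,(f_1,\,\ldots,\,f_r)$ denotes the components of the flat section through $e_j$ at the base point, then $\Phi_j(\overline{e_j})$ equals $\tfrac{1}{(r-j)!}f_r^{(r-j)}$ times the generator of $Q\otimes K_X^{\otimes(r-j)}$, the lower-order jet terms vanishing because $\Phi(F_j)\,\subset\, H_j$ by Corollary \ref{cor1}. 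Using $w'\,=\,-Aw$ one has $w^{(k)}\,=\,B_k w$ with $B_0\,=\,{\rm Id}$ and $B_{k+1}\,=\,B_k'-B_kA$, so that $f_r^{(r-j)}\,=\,(B_{r-j})_{rj}$.

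The crux is to show that $(B_{r-j})_{rj}\,=\,(-1)^{r-j}\prod_{i=j}^{r-1}A_{i+1,i}$, with no correction terms involving derivatives of $A$. I would argue by tracking, along the recursion $B_{k+1}\,=\,B_k'-B_kA$, the column index of the relevant matrix entry: the row index stays equal to $r$, a differentiation leaves the column index unchanged, while a right multiplication by $A$ raises the column index by at most one because $A$ is lower Hessenberg. Since $B_0\,=\,{\rm Id}$ contributes only in column $r$, reaching column $j$ after $r-j$ steps forces every step to be a multiplication by a subdiagonal entry of $A$ and forbids any differentiation; this singles out the unique path $j\,\to\, j+1\,\to\,\cdots\,\to\, r$ and yields the stated product, which is precisely the composite of the second fundamental forms. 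Finally, condition (5) of Definition \ref{def1} says that each ${\rm SF}(D,\,i)$, hence each $A_{i+1,i}$, is the canonical section of ${\mathcal O}_X(S)$, which vanishes to order exactly one at $x_k$ and is nowhere zero on $X_0$. Therefore $\prod_{i=j}^{r-1}A_{i+1,i}$ vanishes to order exactly $r-j$ at $x_k$, giving $E_j\,=\,(r-j)S$ and the asserted identification of the image. The main obstacle is exactly the vanishing of the derivative corrections in $(B_{r-j})_{rj}$; once the Hessenberg bookkeeping above is in place this is immediate, but it is the step that makes the order come out to be exactly $r-j$ rather than merely at least $r-j$.
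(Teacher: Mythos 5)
Your proposal is correct, and in outline it follows the same route as the paper while making explicit a step the paper leaves implicit. The paper's proof works entirely on the level of second fundamental forms: it forms the composite $({\rm SF}(D,\,r-1)\otimes{\rm Id})\circ\cdots\circ{\rm SF}(D,\,j)$ in \eqref{co}, invokes condition (5) of Definition \ref{def1} to identify this composite with ${\rm Id}\otimes\mathbf{1}$, and reads off that its image is $Q\otimes K^{\otimes(r-j)}_X\otimes{\mathcal O}_X(-(r-j)S)$; the fact that the image of $\Phi_j$ coincides with the image of this composite is taken for granted, in the same spirit as Proposition \ref{prop1}, which is quoted as well known. Your argument supplies precisely that missing identification: writing $D\,=\,d+A\,dz$ in an adapted frame, the recursion $B_{k+1}\,=\,B_k'-B_kA$ together with the Hessenberg bookkeeping (column index can drop by at most one per step, and only via a subdiagonal entry of $A$) gives $(B_{r-j})_{rj}\,=\,(-1)^{r-j}\prod_{i=j}^{r-1}A_{i+1,i}$ with no derivative correction terms, so $\Phi_j$ agrees, up to the nonzero constant $(-1)^{r-j}/(r-j)!$, with the composite of second fundamental forms. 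From there both arguments conclude identically: condition (5) makes each $A_{i+1,i}$ a unit times the canonical section of ${\mathcal O}_X(S)$, so the product vanishes to order exactly $r-j$ along $S$ and nowhere else. What the paper's formulation buys is brevity and a coordinate-free statement; what yours buys is a self-contained verification, in particular an honest proof that the vanishing order is exactly $r-j$ rather than merely at least $r-j$, which is exactly the point where a careless treatment of the jets of flat sections could go wrong.
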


\begin{proof}
Recall the homomorphisms ${\rm SF}(D,\, i)$ in statement (5) of Definition \ref{def1}.
Take any $1\, \leq\, k\, \leq\, r-1$. Consider the composition of homomorphisms
$$
({\rm SF}(D,\, r-1)\otimes {\rm Id}_{K^{\otimes(r-1-k)}_X})\circ\cdots\circ ({\rm SF}(D,\, k+1)\otimes
{\rm Id}_{K_X})\circ{\rm SF}(D,\, k)\, :\, F_k/F_{k-1}
$$
\begin{equation}\label{co}
 \longrightarrow\,(F_r/F_{r-1})\otimes K^{\otimes(r-k)}_X
\,=\, Q\otimes K^{\otimes(r-k)}_X\,=\, (F_k/F_{k-1})\otimes {\mathcal O}_X((r-k)S)\, ;
\end{equation}
note that since $(F_{i+1}/F_i)\otimes K_X\,=\, (F_i/F_{i-1})\otimes{\mathcal O}_X(S)$ for all
$1\,\leq\, i\, \leq\, r-1$ (see Remark \ref{rem1}) if follows that
$(F_r/F_{r-1})\otimes K^{\otimes(r-k)}_X\,=\, (F_k/F_{k-1})\otimes {\mathcal O}_X((r-k)S)$.
{}From statement (5) in Definition \ref{def1} we know that the composition of homomorphisms
in \eqref{co} coincides with the natural inclusion map
$$
F_k/F_{k-1}\,\hookrightarrow\, (F_k/F_{k-1})\otimes {\mathcal O}_X((r-k)S)\, .
$$
In other words, if $\textbf{1}\,\in\, H^0(X,\, {\mathcal O}_X((r-k)S))$ is the section
given by the constant function $1$ on $X$, then
the composition of homomorphisms in \eqref{co} coincides with the
homomorphism
$$
{\rm Id}_{F_k/F_{k-1}}\otimes \textbf{1}\, :\, F_k/F_{k-1} \,\longrightarrow\,
(F_k/F_{k-1})\otimes {\mathcal O}_X((r-k)S)\, .
$$
On the other hand, we have $Q\otimes K^{\otimes(r-k)}_X\,=\, H_k/H_{k-1}$ (see \eqref{e10a}). Therefore,
the image of the composition of homomorphisms in \eqref{co} is the subsheaf
$$(H_k/H_{k-1})\otimes
{\mathcal O}_X(-(r-k)S)\,=\,Q\otimes K^{\otimes (r-k)}_X\otimes {\mathcal O}_X(-(r-k)S)\,
\subset\, Q\otimes K^{\otimes (r-k)}_X\, .$$
This completes the proof.
\end{proof}

For $0\, \leq\, j\, \leq\, r$, define
$$
\widehat{H}_j\,:=\, H_j\otimes {\mathcal O}_X(-(r-1)S)\, \subset\, J^{r-1}(Q)\otimes {\mathcal O}_X(-(r-1)S)\, ,
$$
where $H_j$ is constructed in \eqref{e10}. So the filtration $\{H_j\}_{j=0}^r$ of $J^{r-1}(Q)$ produces
a filtration of holomorphic subbundles
\begin{equation}\label{e23}
0\,=\, \widehat{H}_0\, \subset\, \widehat{H}_1\, \subset\, \cdots \, \subset\,
\widehat{H}_{r-1}\, \subset\, \widehat{H}_{r}\,=\, J^{r-1}(Q)\otimes {\mathcal O}_X(-(r-1)S)
\end{equation}
of $J^{r-1}(Q)\otimes {\mathcal O}_X(-(r-1)S)$.

The next result is deduced using Proposition \ref{prop2} and Corollary \ref{cor1}.

\begin{corollary}\label{cor3}
The natural inclusion map $I\, :\, J^{r-1}(Q)\otimes {\mathcal O}_X(-(r-1)S) \,\hookrightarrow\, J^{r-1}(Q)$ factors
through $\Phi$ in \eqref{e8}, meaning there is a unique homomorphism
$${\mathbf i}\, :\, J^{r-1}(Q)\otimes {\mathcal O}_X(-(r-1)S) \, \longrightarrow\, V$$
such that $I\,=\, \Phi\circ {\mathbf i}$. For all $0\, \leq\, j\, \leq\, r$,
$${\mathbf i}(\widehat{H}_j)\,=\, F_j\, ,$$ where $\widehat{H}_j$ and $F_j$ are as in 
\eqref{e23} and \eqref{ne1}, and moreover, $\Phi(F_j) \,=\, H_j$.
\end{corollary}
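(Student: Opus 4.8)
The plan is to construct ${\mathbf i}$ as the extension across $S$ of the inverse of $\Phi$ on the branch-free locus. By Proposition \ref{prop1} the restriction $\Phi\big\vert_{X_0}$ is an isomorphism, so over $X_0$ there is a canonical homomorphism ${\mathbf i}_0\,:=\,(\Phi\big\vert_{X_0})^{-1}\circ (I\big\vert_{X_0})$ satisfying $I\big\vert_{X_0}\,=\,\Phi\circ {\mathbf i}_0$. Since $\Phi$ is injective (Corollary \ref{cor1}) and $X_0$ is dense, any homomorphism ${\mathbf i}$ with $I\,=\,\Phi\circ{\mathbf i}$ must restrict to ${\mathbf i}_0$ on $X_0$, and is therefore unique if it exists. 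Thus the whole content of the first assertion is that the image of $I$ is contained in the image of $\Phi$, equivalently that ${\mathbf i}_0$ extends holomorphically over each point $x_k$ of $S$; this is a purely local question at the branch points.

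At a point $x_k$ I would work in a holomorphic coordinate $z$ centred at $x_k$ and choose two adapted local frames: a frame $f_1,\,\dots,\,f_r$ of $V$ with $f_1,\,\dots,\,f_j$ generating $F_j$ (so $f_j$ projects to a generator of $F_j/F_{j-1}$), and the symbol frame of $J^{r-1}(Q)$ subordinate to the filtration $\{H_j\}$, using \eqref{e10a}. Because $\Phi(F_j)\,\subset\, H_j$ (Corollary \ref{cor1}), the matrix of $\Phi$ in these frames is triangular for the two filtrations, and by Proposition \ref{prop2} its $j$-th diagonal block is the graded map $\Phi_j$ of \eqref{e13}, namely multiplication by the order $(r-j)$ section cutting out $(r-j)S$; so on the diagonal $\Phi$ vanishes to orders $r-1,\,r-2,\,\dots,\,1,\,0$. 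The goal is to bound the pole order of the inverse matrix by $r-1$, so that ${\mathbf i}_0\,=\,\Phi^{-1}\circ I$, which carries the twist by ${\mathcal O}_X(-(r-1)S)$, becomes holomorphic. I would organise this as an induction on the filtration length establishing $\widehat{H}_j\,\subset\,\Phi(F_j)$: for a local section $s$ of $\widehat{H}_j$ its class in $H_j/H_{j-1}$ lies in the image of $\Phi_j$ (here the inequality $r-1\,\geq\, r-j$ is exactly what makes the uniform twist by ${\mathcal O}_X(-(r-1)S)$ large enough, via Proposition \ref{prop2}), one lifts that class to a local section $t$ of $F_j$, and then feeds $s-\Phi(t)$ into the inductive hypothesis for $\widehat{H}_{j-1}$.

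The delicate point, and the step I expect to be the main obstacle, is precisely the closing of this induction at the branch points: after matching the top graded part one must verify that the remainder $s-\Phi(t)$ genuinely lies in $\widehat{H}_{j-1}\,=\,H_{j-1}\otimes {\mathcal O}_X(-(r-1)S)$ and not merely in $H_{j-1}$. This requires keeping careful track of the vanishing orders produced by the iterated second fundamental forms along the whole filtration, and it is here that the precise normalisation in condition (5) of Definition \ref{def1}, that each ${\rm SF}(D,\,i)$ is the inclusion obtained by tensoring with the canonical section of ${\mathcal O}_X(S)$ vanishing to order exactly one, must be used in full, rather than only the qualitative conclusion of Proposition \ref{prop2}.

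Finally, once ${\mathbf i}$ is in hand I would read off the two filtration identities. Applying $\Phi$ to $\widehat{H}_j$ gives $\Phi({\mathbf i}(\widehat{H}_j))\,=\,I(\widehat{H}_j)\,=\,\widehat{H}_j$, and injectivity of $\Phi$ together with $\Phi(F_j)\,\subset\, H_j$ confines ${\mathbf i}(\widehat{H}_j)$ inside $F_j$; comparing ranks and using that $\Phi\big\vert_{X_0}$ carries $F_j\big\vert_{X_0}$ isomorphically onto $H_j\big\vert_{X_0}$ over the dense open $X_0$ (Proposition \ref{prop1}) then identifies ${\mathbf i}(\widehat{H}_j)$ with $F_j$ and $\Phi(F_j)$ with $H_j$ as the saturated subbundles with which they agree generically.
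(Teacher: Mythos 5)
Your proposal follows the same route as the paper's own proof: uniqueness of ${\mathbf i}$ from the injectivity of $\Phi$ and the density of $X_0$, and existence reduced to the local statement $\widehat{H}_r\,\subset\,\Phi(V)$ at the points of $S$, to be deduced from the graded inclusions $\widehat{H}_j/\widehat{H}_{j-1}\,\subset\,\mathrm{image}(\Phi_j)$ supplied by Proposition \ref{prop2}. That is exactly what the paper does: its display \eqref{j1} records these graded inclusions, and the passage to the filtered inclusion \eqref{f1} is made in a single sentence (``from these inclusion maps we conclude''). The step you single out as delicate --- that in the induction the remainder $s-\Phi(t)$ is only known to lie in $H_{j-1}$, not in $\widehat{H}_{j-1}=H_{j-1}\otimes{\mathcal O}_X(-(r-1)S)$ --- is precisely the step the paper does not justify, so your write-up, which leaves that step as an expectation about ``keeping careful track of the vanishing orders,'' is not yet a proof. (You are also right that the equalities ${\mathbf i}(\widehat{H}_j)=F_j$ and $\Phi(F_j)=H_j$ can only be meant generically, i.e.\ up to saturation: literal equality $\Phi(F_j)=H_j$ would contradict Proposition \ref{prop2} for $j<r$, and degree counting for $j=r$.)

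The more serious point is that the gap cannot be closed: for $r\geq 3$ the graded data do not control the filtered position of $\Phi(V)$ inside $J^{r-1}(Q)$, and the asserted factorization fails. Take $r=3$ and, near a branch point $x'=\{z=0\}$, let $V={\mathcal O}_U e_1\oplus{\mathcal O}_U e_2\oplus{\mathcal O}_U e_3$, $F_i=\langle e_1,\dots,e_i\rangle$, with $De_1=e_2\otimes z\,dz$, $De_2=e_3\otimes z\,dz$, $De_3=0$; both second fundamental forms vanish exactly to first order at $x'$, so all conditions of Definition \ref{def1} are met (the same failure occurs in the paper's own global example, namely ${\rm Sym}^2$ of the branched ${\rm SL}(2,{\mathbb C})$--oper obtained by pulling back the standard oper on ${\mathbb C}{\mathbb P}^1$ under $z\mapsto z^2$). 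Solving the flatness equations and taking $2$-jets of the $Q$-component gives, in the frame $\epsilon_0,\epsilon_1,\epsilon_2$ of $J^{2}(Q)$ whose value at $w$ consists of the jets of $1$, $(z-w)$, $(z-w)^2/2$,
\[
\Phi(e_1)\,=\,z^2\epsilon_2,\qquad \Phi(e_2)\,=\,-z\epsilon_1-\epsilon_2,\qquad \Phi(e_3)\,=\,\epsilon_0\, .
\]
The local section $z^2\epsilon_1$ of $\widehat{H}_r=J^{2}(Q)\otimes{\mathcal O}_X(-2S)$ then has no holomorphic $\Phi$-preimage: $\Phi(a_1e_1+a_2e_2+a_3e_3)=z^2\epsilon_1$ forces $a_3=0$, $a_2=-z$ and $z^2a_1=a_2$, i.e.\ $a_1=-1/z$. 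The culprit is the unit coefficient of $\epsilon_2$ in $\Phi(e_2)$, which is forced by the connection itself ($f_3''=-f_2-zf_2'$ does not vanish at $x'$ because $f_2$ does not), couples consecutive graded pieces, and is invisible both in Proposition \ref{prop2} and in condition (5) of Definition \ref{def1}; this is exactly why your induction, and the paper's one-line version of it, cannot be completed. What Cramer's rule together with \eqref{e27} does give is the weaker inclusion $J^{r-1}(Q)\otimes{\mathcal O}_X\bigl(-\tfrac{r(r-1)}{2}S\bigr)\,\subset\,\Phi(V)$, which agrees with the claimed twist only when $r=2$ --- the one case in which the statement, your argument, and the paper's argument are all correct.
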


\begin{proof}
By Proposition \ref{prop2}, the image of the injective homomorphism $\Phi_j$ in \eqref{e13} is the subsheaf
$$
Q\otimes K^{\otimes (r-j)}_X\otimes {\mathcal O}_X(-(r-j)S)\, \subset\,
Q\otimes K^{\otimes (r-j)}_X\,=\, H_j/H_{j-1}\, .
$$
Consequently, we have the following inclusions:
\begin{equation}\label{j1}
\widehat{H}_j/\widehat{H}_{j-1}\,=\,
(H_j/H_{j-1})\otimes {\mathcal O}_X(-(r-1)S) \, \hookrightarrow\,
F_j/F_{j-1}
\end{equation}
$$
=\, (H_j/H_{j-1})\otimes {\mathcal O}_X(-(r-j)S)\, \hookrightarrow\, H_j/H_{j-1}\, ,
$$
where the isomorphism $F_j/F_{j-1}\,=\, (H_j/H_{j-1})\otimes {\mathcal O}_X(-(r-j)S)$ is given by $\Phi_j$.
In view of Corollary \ref{cor1}, from these inclusion maps we conclude that the homomorphism $\Phi$ in
\eqref{e8} satisfies the following:
\begin{equation}\label{f1}
\widehat{H}_r\,:=\,
J^{r-1}(Q)\otimes{\mathcal O}_X(-(r-1)S)\, \stackrel{{\mathbf i}'}{\hookrightarrow}\, \Phi(V) \, \subset\, J^{r-1}(Q)\, .
\end{equation}
Since the coherent analytic sheaf $V$ is identified with the coherent analytic sheaf $\Phi(V)$ using the
isomorphism $\Phi$ between them. the homomorphism ${\mathbf i}'$ produces the homomorphism $\mathbf i$ in the
statement of the corollary, in other words, ${\mathbf i}$ is defined by the equality ${\mathbf i}'\,=\, \Phi\circ{\mathbf i}$.

The map ${\mathbf i}'$
in \eqref{f1} evidently takes $\widehat{H}_j$ in \eqref{e23} into $\Phi(F_j)$. Also,
the inclusion map $\Phi(V) \, \hookrightarrow\, J^{r-1}(Q)$ in \eqref{f1}
takes $\Phi(F_j)$ into the subbundle $H_j\, \subset\, J^{r-1}(Q)$ in \eqref{e10}.
\end{proof}

In view of Corollary \ref{cor3}, the filtration of holomorphic subbundles in \eqref{e23} fits in the
following filtration of coherent analytic subsheaves:
\begin{equation}\label{e23n}
0\,=\, \widehat{H}_0\, \subset\, \widehat{H}_1\, \subset\, \cdots \, \subset\,
\widehat{H}_{r-1}\, \subset\, \widehat{H}_{r}\,=\, J^{r-1}(Q)\otimes {\mathcal O}_X(-(r-1)S)\, \subset\, V\, .
\end{equation}

\subsection{A logarithmic connection}

Take a branched ${\rm SL}(r,{\mathbb C})$--oper $(V,\, {\mathcal F},\, D)$ as in \eqref{gbo}. 
The following proposition shows that the holomorphic connection $D$ produces a logarithmic
connection on the holomorphic vector bundle $J^{r-1}(Q)\otimes {\mathcal O}_X(-(r-1)S)$.

\begin{proposition}\label{prop3}
The holomorphic connection $D\, :\, V\, \longrightarrow\, V\otimes K_X$ in \eqref{gbo}
sends the subsheaf $\widehat{H}_r\, :=\, J^{r-1}(Q)\otimes {\mathcal O}_X(-(r-1)S)\, \subset\, V$ in \eqref{e23n}
to the subsheaf
$$
\big(J^{r-1}(Q)\otimes {\mathcal O}_X(-(r-2)S)\otimes K_X\big)\bigcap (V\otimes K_X)
\, \subset\, V\otimes K_X\, .
$$
In other words, $D$ produces a logarithmic connection, singular over $S$, on the holomorphic
vector bundle $J^{r-1}(Q)\otimes {\mathcal O}_X(-(r-1)S)$.
\end{proposition}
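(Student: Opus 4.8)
The plan is to reduce the statement to a local computation at each point $x_k\in S$ and there write down the connection form explicitly. First observe that away from $S$ there is nothing to prove: on $X_0$ the homomorphism $\Phi$ is an isomorphism (Proposition \ref{prop1}), $\widehat H_r$ coincides with $V$, and $D$ is holomorphic, so $D(\widehat H_r)\subset \widehat H_r\otimes K_X$. Next, the target sheaf in the statement is exactly $\widehat H_r\otimes K_X\otimes{\mathcal O}_X(S)$, since
\[
J^{r-1}(Q)\otimes{\mathcal O}_X(-(r-2)S)\otimes K_X=\big(J^{r-1}(Q)\otimes{\mathcal O}_X(-(r-1)S)\big)\otimes K_X\otimes{\mathcal O}_X(S),
\]
and the intersection with $V\otimes K_X$ is automatic because $D(V)\subset V\otimes K_X$. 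Thus it suffices to show, locally near each $x_k$, that $D(\widehat H_r)\subset \widehat H_r\otimes K_X\otimes{\mathcal O}_X(S)$, i.e. that the operator induced by $D$ on $\widehat H_r$ has at most a simple pole. As $D$ satisfies the Leibniz identity \eqref{e1}, this containment automatically makes $D\vert_{\widehat H_r}$ a logarithmic connection.

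Second, I would choose a coordinate $z$ with $S=\{z=0\}$ and a local frame $w_1,\dots,w_r$ of $V$ adapted to $\mathcal F$, so $w_1,\dots,w_j$ frame $F_j$. Writing $D=d+A\,dz$, condition (4) of Definition \ref{def1} forces $A$ to be upper Hessenberg, $A_{li}=0$ for $l>i+1$, and condition (5), that each ${\rm SF}(D,i)$ is the natural inclusion, forces the subdiagonal entries to vanish to order exactly one, $A_{i+1,i}=z\cdot(\text{unit})$, so one may normalise $A_{i+1,i}=z$. By Corollaries \ref{cor1} and \ref{cor3} together with Proposition \ref{prop2}, the subsheaf $\widehat H_r\subset V$ is the iterated Hecke modification of $V$ along $\mathcal F$: on the $j$-th graded piece the inclusion $\widehat H_j/\widehat H_{j-1}=(H_j/H_{j-1})\otimes{\mathcal O}_X(-(r-1)S)\hookrightarrow F_j/F_{j-1}=(H_j/H_{j-1})\otimes{\mathcal O}_X(-(r-j)S)$ is multiplication by $z^{j-1}$. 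In the chosen frame this means the inclusion $\widehat H_r\hookrightarrow V$ is given by a matrix $g=GN$, where $G=\mathrm{diag}(1,z,\dots,z^{r-1})$ and $N$ is holomorphic and invertible (flag-preserving); equivalently $\widehat H_r$ is generated by $u_j:=z^{j-1}w_j$ up to such an $N$.

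The computation is then the heart of the matter. A local section $\sum_j c_j u_j$ has coefficient vector $g\mathbf c$ in the $w$-frame, and
\[
D\Big(\sum_j c_j u_j\Big)=g\big(\mathbf c'+(g^{-1}g'+g^{-1}Ag)\,\mathbf c\big)\,dz .
\]
For $g=G$ one has $g^{-1}g'=\mathrm{diag}(0,1,\dots,r-1)/z$, a simple pole, while $(g^{-1}Ag)_{li}=z^{\,i-l}A_{li}$ is holomorphic: the entries with $l\le i$ give $i-l\ge 0$, and on the subdiagonal $l=i+1$ the factor $z^{-1}$ is cancelled exactly by $A_{i+1,i}=z$. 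Conjugating by the holomorphic invertible $N$ preserves the property "pole of order $\le 1$", and $N^{-1}N'$ is holomorphic, so $g^{-1}g'+g^{-1}Ag$ has at most a simple pole in general. This says precisely that $D(\sum_j c_j u_j)\in \widehat H_r\otimes K_X\otimes{\mathcal O}_X(S)$, proving the containment; combined with the Leibniz identity it yields the asserted logarithmic connection on $J^{r-1}(Q)\otimes{\mathcal O}_X(-(r-1)S)$.

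The hard part will be the local normal form of $\widehat H_r$, namely that the flag Hecke modification has inclusion matrix $g=GN$ with $N$ holomorphic and invertible; this is exactly where Propositions \ref{prop1}--\ref{prop2} and, crucially, the \emph{order-one} vanishing of the second fundamental forms are used. The cancellation $z^{-1}\cdot A_{i+1,i}=z^{-1}\cdot z=1$ on the subdiagonal of $g^{-1}Ag$ is what keeps the pole simple; had the forms vanished to higher order this cancellation would fail and the induced connection would acquire a pole of order greater than one. Once the normal form is established, the displayed gauge computation is routine, and one reads off that the residue at $x_k$ is $\mathrm{diag}(0,1,\dots,r-1)$ plus the value at $z=0$ of $g^{-1}Ag$, which will be the starting point for the residue analysis of later sections.
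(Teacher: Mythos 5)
Your proof is correct and is essentially the paper's own argument written in matrix form: the paper likewise works locally near each point of $S$, splits the filtration (your adapted frame), writes $D$ as a Hessenberg matrix of operators whose subdiagonal entries vanish to order exactly one by condition (5) of Definition \ref{def1}, identifies $\widehat{H}_r$ locally with $\bigoplus_i {\mathcal F}_i\otimes{\mathcal O}_U(-(i-1)x')$ (your $g=GN$ with $G=\mathrm{diag}(1,z,\dots,z^{r-1})$), and then checks term by term that the induced operator has at most a simple pole. One small slip in your closing aside, which does not affect the proposition itself: the residue is the value at $z=0$ of $z\big(g^{-1}g'+g^{-1}Ag\big)$, so the holomorphic term $g^{-1}Ag$ contributes nothing and the residue equals $N(0)^{-1}\mathrm{diag}(0,1,\dots,r-1)N(0)$, in agreement with Lemma \ref{lem2}.
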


\begin{proof}
Recall that $V\big\vert_{X_0}\,=\, J^{r-1}(Q)\big\vert_{X_0}\,=\, 
(J^{r-1}(Q)\otimes{\mathcal O}_X(-(r-2)S))\big\vert_{X_0}$, where $X_0$ is the open subset in
\eqref{e11}. So we need to investigate the connection $D$ only around the points of $S$. Take any point
$$
x'\, \in\, S\, .
$$
Fix a holomorphic splitting of the filtration $\{F_i\}_{i=0}^r$ of $V$ in \eqref{ne1} over a sufficiently
small analytic neighborhood $U$ of $x'$; this subset $U$ is chosen such that $U\bigcap S\,=\, x'$. So we have
a holomorphic isomorphism
\begin{equation}\label{e24}
V\big\vert_U\,\stackrel{\sim}{\longrightarrow}\,\bigoplus_{i=1}^r (F_i/F_{i-1})\big\vert_U\,=:\,
\bigoplus_{i=1}^r {\mathcal F}_i
\end{equation}
using the notation ${\mathcal F}_i\, :=\, (F_i/F_{i-1})\big\vert_U$; it should be clarified that there
is no natural isomorphism. We would express the
holomorphic connection $D\big\vert_U$ in \eqref{gbo} in terms of the decomposition in \eqref{e24}. From
the fifth condition in Definition \ref{def1} it follows that $D\big\vert_U$ has the following expression
in terms of the decomposition in \eqref{e24}:
\begin{equation}\label{e25}
D\big\vert_U\,=\,
\begin{pmatrix}
D_1& \alpha_{1,2} &\alpha_{1,3} & \alpha_{1,4}&\cdots & \alpha_{1,r-2} & \alpha_{1,r-1}& \alpha_{1,r}\\
\gamma_1 & D_2 & \alpha_{2,3} & \alpha_{2,4} &\cdots & \alpha_{2,r-2} & \alpha_{2,r-1}& \alpha_{2,r}\\
0 & \gamma_2 & D_3 & \alpha_{3,4}& \cdots & \alpha_{3,r-2} & \alpha_{3,r-1}& \alpha_{3,r}\\
0 & 0 & \gamma_3 & D_4 &\cdots & \alpha_{4,r-2} & \alpha_{4,r-1}& \alpha_{4,r}\\
\vdots & \vdots &\vdots & \vdots & \vdots & \vdots & \vdots & \vdots \\
0 & 0 & 0 &0& \cdots & D_{r-2} & \alpha_{r-2,r-1}& \alpha_{r-2,r}\\
0 & 0 & 0 & 0& \cdots & \gamma_{r-2} & D_{r-1} & \alpha_{r-1,r}\\
0 & 0 & 0 & 0& \cdots & 0 & \gamma_{r-1} & D_{r}\\
\end{pmatrix}
\end{equation}
where $D_i$ (the entry at the $i\times i$-th position of the matrix) is a holomorphic connection
on ${\mathcal F}_i$, and $\gamma_i$ (the entry at the $(i+1)\times i$-th position) is a section
\begin{equation}\label{gi}
\gamma_i\, \in\, H^0(U,\, \text{Hom}({\mathcal F}_i,\, {\mathcal F}_{i+1})\otimes (K_X\big\vert_U)
\otimes {\mathcal O}_U(-x'))\, ;
\end{equation}
in other words, $\gamma_i$ is a holomorphic homomorphism ${\mathcal F}_i\, \longrightarrow\,
{\mathcal F}_{i+1}\otimes (K_X\big\vert_U)$ that vanishes at the point $x'\, \in\, U$. For any $j\, >\, i$,
we have
$$
\alpha_{i,j}\, \in\, H^0(U,\, \text{Hom}({\mathcal F}_j,\, {\mathcal F}_i)\otimes (K_X\big\vert_U))\, .
$$
The entry at the $i\times j$-th position of the matrix in \eqref{e25} is zero if $i\, >\, j+1$. It may be
mentioned that by choosing the splitting in \eqref{e24} carefully it is possible to make $\alpha_{i,j}\,=\,0$
for all $j\, >\, i$; but this will not be needed here.

The decomposition of $V\big\vert_U$ in \eqref{e24} produces a holomorphic decomposition of
the vector bundle $(J^{r-1}(Q)\otimes {\mathcal O}_X(-(r-1)S))\big\vert_U$. To see this, consider the
intersection of coherent analytic subsheaves of $V\big\vert_U$
\begin{equation}\label{i}
{\mathcal G}_i\, :=\, {\mathcal F}_i\bigcap\big(J^{r-1}(Q)\otimes {\mathcal O}_X(-(r-1)S)\big)\big\vert_U
\,=\, {\mathcal F}_i\bigcap\big(J^{r-1}(Q\big\vert_U)\otimes {\mathcal O}_U(-(r-1)x')\big)
\, \subset\, V\big\vert_U\, ;
\end{equation}
note that both ${\mathcal F}_i$ and $J^{r-1}(Q\big\vert_U)\otimes {\mathcal O}_U(-(r-1)x')$
are subsheaves of $V\big\vert_U$ (see Corollary \ref{cor3} and \eqref{e24}). Then we have a holomorphic
decomposition of $J^{r-1}(Q\big\vert_U)\otimes {\mathcal O}_U(-(r-1)x')$
\begin{equation}\label{i2}
J^{r-1}(Q\big\vert_U)\otimes {\mathcal O}_U(-(r-1)x')\,=\, \bigoplus_{i=1}^r {\mathcal G}_i
\end{equation}
into a direct sum of holomorphic line bundles on $U$. Indeed, the natural homomorphism of coherent analytic sheaves
$$
\bigoplus_{i=1}^r {\mathcal G}_i\, \longrightarrow\, J^{r-1}(Q\big\vert_U)\otimes {\mathcal O}_U(-(r-1)x')
$$
is clearly surjective; it is also injective because it is injective over the open subset $U\setminus\{x'\}
\, \subset\, U$ and the coherent analytic sheaf $\bigoplus_{i=1}^r
{\mathcal G}_i$ is torsionfree. From \eqref{j1} it follows immediately that
\begin{itemize}
\item ${\mathcal G}_i\,=\, {\mathcal F}_i\otimes {\mathcal O}_U(-(i-1)x')$, and

\item the inclusion map ${\mathcal G}_i\, \hookrightarrow\, {\mathcal F}_i$ (see \eqref{i}) coincides with the
natural inclusion map
$$
{\mathcal F}_i\otimes {\mathcal O}_U(-(i-1)x')) \, \hookrightarrow\, {\mathcal F}_i\, .
$$
\end{itemize}

Now it is straight-forward to check that the connection operator $D\big\vert_U$ in \eqref{e25} on
$\bigoplus_{i=1}^r {\mathcal F}_i$ produces a holomorphic differential operator
$$
\bigoplus_{i=1}^r {\mathcal G}_i
\, \longrightarrow\, \left(\bigoplus_{i=1}^r {\mathcal G}_i\right)\otimes (K_X\big\vert_U)\otimes
{\mathcal O}_U(x')\, .
$$
To see this, first note that the section $\gamma_i$ in \eqref{e25} produces a
holomorphic homomorphism
$$
{\mathcal G}_i\, \longrightarrow\, {\mathcal G}_{i+1}\otimes (K_X\big\vert_U)
$$
because the homomorphism $\gamma_i \in\, H^0(U,\, \text{Hom}({\mathcal F}_i,\,
{\mathcal F}_{i+1})\otimes (K_X\big\vert_U))$ in \eqref{gi} vanishes at the point $x'$.
Secondly, for any $j\, >\, i$, the section $\alpha_{i,j}$ in \eqref{e25} produces a section
\begin{equation}\label{i3}
\widetilde{\alpha}_{i,j}\, \in\, H^0(U,\, \text{Hom}({\mathcal G}_j,\, {\mathcal G}_i)
\otimes (K_X\big\vert_U)\otimes {\mathcal O}_U(-(j-i)x'))\, ,
\end{equation}
in particular, $\widetilde{\alpha}_{i,j}\, \in\, H^0(U,\, \text{Hom}({\mathcal G}_j,\, {\mathcal G}_i)
\otimes (K_X\big\vert_U))$. Thirdly, the connection operator
$$
D_i\, :\, {\mathcal F}_i\, \longrightarrow\, {\mathcal F}_i \otimes (K_X\big\vert_U)
$$
in \eqref{e25} produces a first order holomorphic differential operator
$$
{\mathcal G}_i\, \longrightarrow\, {\mathcal G}_i \otimes (K_X\big\vert_U)\otimes{\mathcal O}_U(x')
$$
that satisfies the Leibniz identity, because $D_i$ itself satisfies the Leibniz identity.

Consequently, $D$ sends the subsheaf $J^{r-1}(Q)\otimes {\mathcal O}_X(-(r-1)S)\, \subset\, V$ in \eqref{e23n}
to the subsheaf
$$
\big(J^{r-1}(Q)\otimes {\mathcal O}_X(-(r-2)S)\otimes K_X\big)\bigcap (V\otimes K_X)
\, \subset\, V\otimes K_X\, .
$$
Since
$$
J^{r-1}(Q)\otimes {\mathcal O}_X(-(r-2)S)\otimes K_X\,=\, J^{r-1}(Q)\otimes {\mathcal O}_X(-(r-1)S)\otimes K_X
\otimes {\mathcal O}_X(S)\, ,
$$
this implies that $D$ produces a logarithmic connection on $J^{r-1}(Q)\otimes {\mathcal O}_X(-(r-1)S)$.
\end{proof}

\begin{remark}\label{rem3}
Consider the holomorphic connection $D_i$ in \eqref{e25} on the holomorphic vector bundle
${\mathcal F}_i$ on $U$. The connection on ${\mathcal G}_i\, :=\,
{\mathcal F}_i\bigcap (J^{r-1}(Q\big\vert_U)\otimes {\mathcal O}_U(-(r-1)x'))$ induced by
$D_i$ is singular at $x'$ if $i\,\geq\, 2$. Therefore, the singular locus of the
logarithmic connections on $J^{r-1}(Q)\otimes {\mathcal
O}_X(-(r-1)S)$ constructed in Proposition \ref{prop3} is exactly $S$.
\end{remark}

Take any branched $\text{SL}(r,{\mathbb C})$--oper $(V,\, {\mathcal F},\, D)$ as in \eqref{gbo}.
Let $\mathcal D$ denote the logarithmic connection on $J^{r-1}(Q)\otimes {\mathcal
O}_X(-(r-1)S)$ constructed in Proposition \ref{prop3} from $(V,\, {\mathcal F},\, D)$.
The following is a straight-forward consequence of the construction of $\mathcal D$.

\begin{corollary}\label{cor4}\mbox{}
\begin{enumerate}
\item The logarithmic connection $\mathcal D$ on $J^{r-1}(Q)\otimes {\mathcal
O}_X(-(r-1)S)$ constructed in Proposition \ref{prop3} satisfies the condition
$$
{\mathcal D}(\widehat{H}_i)\, \subset\, \widehat{H}_{i+1}\otimes K_X\otimes{\mathcal O}_X(S)
$$
for all $1\, \leq\, i\, \leq\, r-1$, where $\{\widehat{H}_j\}_{j=0}^r$ is the filtration of
$J^{r-1}(Q)\otimes {\mathcal O}_X(-(r-1)S)$ in \eqref{e23}.

\item The second fundamental form of the subbundle $$\widehat{H}_i\big\vert_{X_0}\, \subset\,
(J^{r-1}(Q)\otimes {\mathcal O}_X(-(r-1)S))\big\vert_{X_0}$$ for the holomorphic connection
${\mathcal D}\big\vert_{X_0}$ is nowhere zero on $X_0$ for all $1\, \leq\, i\, \leq\, r-1$,
where $X_0$ is the open subset in \eqref{e11}.
\end{enumerate}
\end{corollary}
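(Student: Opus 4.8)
The plan is to read both statements off the explicit local description of $\mathcal D$ produced in the proof of Proposition \ref{prop3}, together with the fact (Proposition \ref{prop1}) that over $X_0$ the homomorphism $\Phi$ is an isomorphism and $\mathcal D$ is nonsingular there. For the first assertion I would argue locally: fix a point $x'\in S$ and a neighborhood $U$ with $U\cap S=\{x'\}$, and recall the splitting \eqref{e24} in which $D\big\vert_U$ has the matrix form \eqref{e25}. Under this splitting one has $\widehat H_r\big\vert_U=\bigoplus_i{\mathcal G}_i$ with ${\mathcal G}_i={\mathcal F}_i\otimes{\mathcal O}_U(-(i-1)x')$, and the filtration \eqref{e23} restricts to $\widehat H_j\big\vert_U=\bigoplus_{k\le j}{\mathcal G}_k$ (see \eqref{i}--\eqref{j1}). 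It then suffices to track where each entry of \eqref{e25} sends a local section of $\widehat H_i\big\vert_U=\bigoplus_{k\le i}{\mathcal G}_k$. The proof of Proposition \ref{prop3} already records the three relevant facts: the diagonal block $D_k$ induces a connection on ${\mathcal G}_k$ with at most a simple pole at $x'$; the subdiagonal entry $\gamma_k$ induces a holomorphic homomorphism ${\mathcal G}_k\to{\mathcal G}_{k+1}\otimes K_X$, since $\gamma_k$ vanishes at $x'$ by \eqref{gi}; and each superdiagonal entry $\alpha_{i,j}$ with $i<j$ induces a holomorphic homomorphism ${\mathcal G}_j\to{\mathcal G}_i\otimes K_X$ by \eqref{i3}. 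Hence, applied to $\bigoplus_{k\le i}{\mathcal G}_k$, the largest index occurring in the output of $\mathcal D$ is $i+1$ (produced by $\gamma_i$), while the only poles are the simple poles coming from the diagonal. This gives $\mathcal D(\widehat H_i\big\vert_U)\subset(\widehat H_{i+1}\otimes K_X\otimes{\mathcal O}_X(S))\big\vert_U$; since $x'$ is arbitrary and the inclusion is automatic on $X_0$, the first assertion follows.

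For the second assertion I would pass to $X_0$. By Proposition \ref{prop1}, $\Phi\big\vert_{X_0}$ identifies $V\big\vert_{X_0}$ with $(J^{r-1}(Q)\otimes{\mathcal O}_X(-(r-1)S))\big\vert_{X_0}$, carrying $F_i\big\vert_{X_0}$ onto $\widehat H_i\big\vert_{X_0}$, and under this identification $\mathcal D\big\vert_{X_0}$ is exactly the holomorphic connection $D\big\vert_{X_0}$ (the connection $\mathcal D$ being nonsingular on $X_0$). Consequently the second fundamental form of $\widehat H_i\big\vert_{X_0}$ for $\mathcal D\big\vert_{X_0}$ equals the restriction to $X_0$ of ${\rm SF}(D,\,i)$ in \eqref{e3}. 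By the fifth condition in Definition \ref{def1} this is the natural inclusion $F_i/F_{i-1}\hookrightarrow(F_i/F_{i-1})\otimes{\mathcal O}_X(S)$, that is, multiplication by the canonical section $\mathbf 1$ of ${\mathcal O}_X(S)$, whose zero divisor is exactly $S$. Since $\mathbf 1$ is nowhere zero on $X_0=X\setminus S$, the second fundamental form is nowhere zero on $X_0$, proving the second assertion.

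I do not expect a genuine obstacle: both parts are bookkeeping on top of Propositions \ref{prop1} and \ref{prop3}. The only point needing care is the pole count in the first assertion, namely that restricting to the subsheaves ${\mathcal G}_k$ does not raise the pole order of the off-diagonal terms and keeps the output inside $\widehat H_{i+1}$. This rests precisely on the two facts already isolated in the proof of Proposition \ref{prop3}---that $\gamma_k$ vanishes at $x'$ (see \eqref{gi} and Remark \ref{rem3}) and that the superdiagonal entries stay holomorphic on the ${\mathcal G}_k$ (see \eqref{i3})---so no new estimate is required.
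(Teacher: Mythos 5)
Your proof is correct, and part (2) coincides with the paper's own argument: both restrict to $X_0$, use the identification of $(E,\,\{\widehat{H}_i\},\,\mathcal{D})\big\vert_{X_0}$ with $(V,\,\{F_i\},\,D)\big\vert_{X_0}$ coming from Proposition \ref{prop1} and the construction of $\mathcal D$, and then invoke the fifth condition of Definition \ref{def1}. For part (1), however, you take a genuinely different route. The paper's proof is a density argument: the transversality ${\mathcal D}(\widehat{H}_i)\,\subset\,\widehat{H}_{i+1}\otimes K_X\otimes{\mathcal O}_X(S)$ is equivalent to the vanishing of the ${\mathcal O}_X$--linear composition $\widehat{H}_i\,\longrightarrow\, (E/\widehat{H}_{i+1})\otimes K_X\otimes{\mathcal O}_X(S)$, which vanishes on the dense open subset $X_0$ (by Definition \ref{def1}(4) under the same identification) and hence vanishes identically because the target is torsion-free. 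You instead redo the local analysis at each $x'\,\in\, S$: writing $\widehat{H}_j\big\vert_U\,=\,\bigoplus_{k\leq j}{\mathcal G}_k$ and tracking the three types of entries of the matrix \eqref{e25} on the ${\mathcal G}_k$ (diagonal terms acquire at most a simple pole, $\gamma_k$ stays holomorphic and raises the index by one, the $\widetilde\alpha_{i,j}$ stay holomorphic and lower the index), exactly the facts isolated in the proof of Proposition \ref{prop3}. Both arguments are sound; yours is longer but more explicit, and it records precisely which entries contribute to poles, which is the same bookkeeping that later drives Lemma \ref{lem2} on residues, whereas the paper's argument buys brevity by exploiting that Griffiths-type transversality is a closed condition. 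One small wording caveat: in part (1) the inclusion over $X_0$ is not literally ``automatic''; it uses the identification $\mathcal{D}\big\vert_{X_0}\,=\,D\big\vert_{X_0}$ together with Definition \ref{def1}(4), i.e., the same input you spell out in part (2), so you should say so explicitly rather than dismiss it.
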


\begin{proof}
Over the open subset $X_0$ in \eqref{e11}, we have
$$
J^{r-1}(Q)\big\vert_{X_0}\,=\, J^{r-1}(Q)\otimes {\mathcal O}_X(-(r-1)S)\big\vert_{X_0}\,
=\, V\big\vert_{X_0}
$$
(see Proposition \ref{prop1}). This isomorphism takes $F_j\big\vert_{X_0}\, \subset\, 
V\big\vert_{X_0}$ isomorphically to $H_j\big\vert_{X_0}\,=\, \widehat{H}_j\big\vert_{X_0}$ 
(see Proposition \ref{prop1}). It also takes the holomorphic connection $D\big\vert_{X_0}$ 
to ${\mathcal D}\big\vert_{X_0}$; this is an immediate consequence of the construction of 
${\mathcal D}$. Since $X_0$ is dense in $X$, the first statement now follows from the facts 
that $D(F_i)\, \subset\, F_{i+1}\otimes K_X$ for all $1\, \leq\, i\, \leq\, r-1$ (see the 
fourth statement in Definition \ref{def1}), while the second statement follows from the 
fact that the second fundamental form of the subbundle $F_i\, \subset\, V$, for the 
holomorphic connection $D$, is nowhere zero on $X_0$ (see the fifth statement in Definition 
\ref{def1}).
\end{proof}

For any $1\, \leq\, i\, \leq\, r-1$, let
$$
{\rm SF}(\mathcal{D},\, \widehat{H}_i)
\, \in\, H^0(X,\, \text{Hom}(\widehat{H}_i,\, \widehat{H}_r/\widehat{H}_i)\otimes K_X\otimes
{\mathcal O}_X(S))
$$
be the second fundamental form of the subbundle
$\widehat{H}_i$ in \eqref{e23} for the logarithmic connection $\mathcal D$
on $J^{r-1}(Q)\otimes {\mathcal O}_X(-(r-1)S)\,=\, \widehat{H}_r$
constructed in Proposition \ref{prop3}; see \eqref{e2l} for the
second fundamental form. From Corollary
\ref{cor4}(1) we know that ${\rm SF}(\mathcal{D},\, \widehat{H}_i)$ lies in the
image of the natural inclusion map
$$
H^0(X,\, \text{Hom}(\widehat{H}_i,\, \widehat{H}_{i+1}/\widehat{H}_i)\otimes K_X\otimes
{\mathcal O}_X(S))\, \hookrightarrow\,
H^0(X,\, \text{Hom}(\widehat{H}_i,\, \widehat{H}_r/\widehat{H}_i)\otimes K_X\otimes
{\mathcal O}_X(S))
$$
given by the inclusion map $\widehat{H}_{i+1}\, \hookrightarrow\,\widehat{H}_r$. So
we get a homomorphism
\begin{equation}\label{x1}
{\rm SF}(\mathcal{D},\, i)\, \in\, H^0(X,\, \text{Hom}(\widehat{H}_i/\widehat{H}_{i-1},\,
\widehat{H}_r/\widehat{H}_i)\otimes K_X\otimes
{\mathcal O}_X(S))
\end{equation}
for every $1\, \leq\, i\, \leq\, r-1$ given by these ${\rm SF}(\mathcal{D},\, \widehat{H}_i)$.

For the filtration of $J^{r-1}(Q)\otimes {\mathcal O}_X(-(r-1)S)$, from \eqref{j1} and \eqref{e10a}
it follows immediately that
\begin{equation}\label{eq}
\widehat{H}_j/\widehat{H}_{j-1}\,=\, Q\otimes K^{\otimes (r-j)}_X\otimes{\mathcal O}_X(-(r-1)S)
\end{equation}
for all $1\, \leq\, j\, \leq\, r$. Therefore, ${\rm SF}(\mathcal{D},\, i)$ in \eqref{x1}
is a holomorphic section
\begin{equation}\label{x2}
{\rm SF}(\mathcal{D},\, i)\, \in\, H^0(X,\, {\mathcal O}_X(S))
\end{equation}
for every $1\, \leq\, i\, \leq\, r-1$.

The following lemma is a refinement of Corollary \ref{cor4}(2).

\begin{lemma}\label{lem-sf}
The holomorphic section ${\rm SF}(\mathcal{D},\, i)$ in \eqref{x2} coincides
with the section of ${\mathcal O}_X(S)$ given by the constant function $1$ on $X$.
\end{lemma}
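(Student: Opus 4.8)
The plan is to reduce the statement to a purely local computation around the points of $S$ and then invoke the identity theorem. Since both ${\rm SF}(\mathcal{D},\,i)$ in \eqref{x2} and the section $\textbf{1}$ of $\mathcal{O}_X(S)$ given by the constant function $1$ are global holomorphic sections of the line bundle $\mathcal{O}_X(S)$, and $X$ is connected, it suffices to prove that they coincide on one nonempty open subset. I would take this open subset to be a neighborhood $U$ of a point $x'\in S$ of exactly the type fixed in the proof of Proposition \ref{prop3} (with $U\cap S=\{x'\}$), so that the explicit local description of $\mathcal{D}$ obtained there is available.

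On $U$ I would use the splitting \eqref{e24}, writing $D\vert_U$ in the matrix form \eqref{e25} with subdiagonal entries $\gamma_i$, together with the induced decomposition \eqref{i2} of $\widehat H_r\vert_U$ into the line bundles ${\mathcal G}_i={\mathcal F}_i\otimes{\mathcal O}_U(-(i-1)x')$ of \eqref{i}. The first key input is condition (5) of Definition \ref{def1}: it says precisely that $\gamma_i={\rm SF}(D,\,i)$ equals $\textbf{1}$ under the identification $\mathrm{Hom}({\mathcal F}_i,\,{\mathcal F}_{i+1})\otimes K_X=\mathcal{O}_X(S)$, so that in a local coordinate $z$ centered at $x'$ and in compatible frames $e_i$ of ${\mathcal F}_i$ one has $\gamma_i(e_i)=z\,e_{i+1}\otimes dz$; that is, $\gamma_i$ vanishes to order exactly one at $x'$.

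The computation I would then carry out is to feed the generator $g_i=z^{i-1}e_i$ of ${\mathcal G}_i$ into $D$ and read off the ${\mathcal G}_{i+1}$-component. The Leibniz rule produces a diagonal contribution that lands in ${\mathcal G}_i\otimes K_X\otimes{\mathcal O}_U(x')$ (the logarithmic part, as in Remark \ref{rem3}), while the subdiagonal contribution is $z^{i-1}\gamma_i(e_i)=z^i\,e_{i+1}\otimes dz=g_{i+1}\otimes dz$, which is holomorphic. Thus ${\rm SF}(\mathcal{D},\,i)$, viewed as the subdiagonal homomorphism ${\mathcal G}_i\to{\mathcal G}_{i+1}\otimes K_X$, is the frame element $g_i^*\otimes g_{i+1}\otimes dz$; a direct check in frames shows that under the canonical isomorphism $\mathrm{Hom}({\mathcal G}_i,\,{\mathcal G}_{i+1})\otimes K_X=\mathcal{O}_X$ (coming from $\widehat H_{i+1}/\widehat H_i=(\widehat H_i/\widehat H_{i-1})\otimes TX$, see \eqref{eq}) this element is the constant $1$. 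Reinterpreting it inside the logarithmic target $\mathrm{Hom}({\mathcal G}_i,\,{\mathcal G}_{i+1})\otimes K_X\otimes\mathcal{O}_X(S)$ via the inclusion $\,\cdot\otimes\textbf{1}\,$ yields ${\rm SF}(\mathcal{D},\,i)=\textbf{1}$ on $U$, and hence on all of $X$.

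The point requiring the most care, and the reason the local computation cannot be bypassed, is the bookkeeping of the line bundle twists. One is tempted to argue that $\mathcal{D}$ and $D$ agree on $X_0$ (Proposition \ref{prop1}) and that therefore so do their second fundamental forms; but the identification of $\mathrm{Hom}({\mathcal F}_i,\,{\mathcal F}_{i+1})\otimes K_X$ with $\mathcal{O}_X(S)$ used for $D$ and the identification of $\mathrm{Hom}(\widehat H_i/\widehat H_{i-1},\,\widehat H_{i+1}/\widehat H_i)\otimes K_X\otimes\mathcal{O}_X(S)$ with $\mathcal{O}_X(S)$ used for $\mathcal{D}$ are genuinely different, differing by the twist ${\mathcal O}_U(-(i-1)x')$ built into ${\mathcal G}_i$. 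The heart of the matter is that this twist shifts the order-one vanishing of $\gamma_i$ so as to produce a nowhere-vanishing subdiagonal for $\mathcal{D}$, and the explicit frame computation confirms that the resulting proportionality constant between ${\rm SF}(\mathcal{D},\,i)$ and $\textbf{1}$ is exactly $1$, rather than merely a nonvanishing holomorphic function, which is all that Corollary \ref{cor4}(2) by itself guarantees.
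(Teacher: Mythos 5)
Your reduction via the identity theorem is fine, and the frame computation is sound as far as it goes: normalizing $e_{i+1}$ through the isomorphism in Definition~\ref{def1}(2) so that $\gamma_i(e_i)=z\,e_{i+1}\otimes dz$, one indeed gets $g_i\mapsto g_{i+1}\otimes dz$ for $g_i=z^{i-1}e_i$, so the subdiagonal of $\mathcal D$ is nonvanishing at $x'$ as a homomorphism ${\mathcal G}_i\to{\mathcal G}_{i+1}\otimes K_X$. Combined with Corollary~\ref{cor4}(2), this shows that the divisor of ${\rm SF}({\mathcal D},\,i)$ is exactly $S$, hence ${\rm SF}({\mathcal D},\,i)=c\cdot{\mathbf 1}$ for some constant $c\in{\mathbb C}^*$. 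The genuine gap is the step you dispose of with ``a direct check in frames shows'': the claim that $c=1$ under the identification \eqref{eq}. That is not a check; it is the entire content of the lemma, and nothing in your argument addresses it. The frames $e_i,g_i$ are defined purely on the oper side (the splitting \eqref{e24} and the isomorphisms of Definition~\ref{def1}(2)), whereas \eqref{eq} identifies $\widehat H_j/\widehat H_{j-1}$ with $Q\otimes K_X^{\otimes(r-j)}\otimes{\mathcal O}_X(-(r-1)S)$ through the jet-bundle filtration \eqref{je}, \eqref{e10a}. The only bridge between the two sides is the map $\Phi$ of \eqref{e8}, i.e.\ Taylor expansion of $D$-flat sections, and your computation never evaluates $\Phi$: to read off $g_i^*\otimes g_{i+1}\otimes dz$ as a section of ${\mathcal O}_X(S)$ via \eqref{eq} you must know how $\Phi_i(e_i)$ and $\Phi_{i+1}(e_{i+1})$ sit relative to each other inside $J^{r-1}(Q)$. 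That evaluation requires differentiating flat sections $r-i$ times, and universal constants appear (for instance, $\Phi_i$ agrees with the composite ${\rm SF}(D,\,r-1)\circ\cdots\circ{\rm SF}(D,\,i)$ only up to a sign and a factorial depending on $r-i$); the constant $c$ is precisely a ratio of such quantities, and until they are tracked your argument cannot distinguish $c=1$ from any other nonzero value.

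Note moreover that the compatibility you are missing is exactly the objection you raise, in your last paragraph, against the simpler argument --- and that simpler argument is the paper's actual proof: over $X_0$ the two triples coincide, $(E,\{\widehat H_i\},{\mathcal D})\big|_{X_0}=(V,\{F_i\},D)\big|_{X_0}$ (recorded in the proof of Corollary~\ref{cor4}), so the lemma follows from condition (5) of Definition~\ref{def1}, Remark~\ref{rem1} and the density of $X_0$, the matching of the two trivializations being subsumed in the statement that the triples coincide. Your objection to that route has merit exactly to the extent that this matching needs justification; but your proposal does not supply the justification either. The twist ${\mathcal O}_U(-(i-1)x')$ on which you focus is indeed handled by your $z^{i-1}$ bookkeeping, but the discrepancy between oper-side and jet-side trivializations is a phenomenon at every point of $X$, not only at the branch points, so relocating the computation to a neighborhood of $x'$ does not remove it. To complete your approach you would have to solve \eqref{e25} for flat sections to leading order near $x'$, compute $\Phi$ explicitly in the frames $\{e_i\}$, and verify that the resulting sign and factorial factors cancel in the ratio defining ${\rm SF}({\mathcal D},\,i)$.
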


\begin{proof}
In the proof of Corollary \ref{cor4} it was observed that
$$
(J^{r-1}(Q)\otimes {\mathcal O}_X(-(r-1)S),\, \{\widehat{H}_i\}_{i=1}^r,\,
{\mathcal D})\big\vert_{X_0}\,=\, (V,\, \{F_i\}_{i=1}^r,\, D)\big\vert_{X_0}\, ,
$$
where $X_0$ the open subset $X_0$ in \eqref{e11}. Therefore, the lemma follows from
the fifth statement in Definition \ref{def1} and Remark \ref{rem1}.
\end{proof}

Using \eqref{je} it is deduced that
$$
\det J^{r-1}(Q)\, :=\, \bigwedge\nolimits^{r}J^{r-1}(Q)\,=\,
Q^{\otimes r}\otimes K^{\otimes r(r-1)/2}_X\, .
$$
Therefore, from \eqref{e7} it follows that
$$
\det J^{r-1}(Q)\,=\, F^{\otimes r}_1\otimes (TX)^{\otimes r(r-1)/2}\otimes {\mathcal O}_X(r(r-1)S)\, .
$$
Now the expression of $F_1$ in the first statement in Definition \ref{def1} gives that
$$
\det J^{r-1}(Q)\,=\, {\mathcal O}_X\left(\frac{r(r-1)}{2}S\right)\, .
$$
Hence we have the following:
\begin{equation}\label{e27}
\det (J^{r-1}(Q)\otimes{\mathcal O}_X(-(r-1)S))\,=\,
{\mathcal O}_X\left(-\frac{r(r-1)}{2}S\right)\, .
\end{equation}

Take any branched $\text{SL}(r,{\mathbb C})$--oper $(V,\, {\mathcal F},\, D)$ as in \eqref{gbo}.
Let $\mathcal D$ denote the logarithmic connection on $J^{r-1}(Q)\otimes {\mathcal
O}_X(-(r-1)S)$ constructed in Proposition \ref{prop3} from $(V,\, {\mathcal F},\, D)$.

\begin{lemma}\label{lem4}
The logarithmic connection on $\det (J^{r-1}(Q)\otimes{\mathcal O}_X(-(r-1)S))$ induced by the
logarithmic connection $\mathcal D$ on $J^{r-1}(Q)\otimes {\mathcal
O}_X(-(r-1)S)$ coincides with the logarithmic connection on ${\mathcal O}_X\left(-\frac{r(r-1)}{2}S\right)$
given by the de Rham differential $d$ (it sends $f$ to $df$, in particular, constant
functions are covariant constant), once $\det (J^{r-1}(Q)\otimes{\mathcal O}_X(-(r-1)S))$ is
identified with ${\mathcal O}_X\left(-\frac{r(r-1)}{2}S\right)$ using \eqref{e27}.
\end{lemma}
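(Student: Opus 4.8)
The plan is to deduce the statement directly from the inclusion $\mathbf i$ of Corollary \ref{cor3} rather than from the formula \eqref{e27}. Writing $W := J^{r-1}(Q)\otimes\mathcal O_X(-(r-1)S)$, recall that $\mathcal D$ was constructed in Proposition \ref{prop3} precisely as the logarithmic connection on $W$ induced from $D$ through the sheaf inclusion $\mathbf i\colon W\hookrightarrow V$. Taking $r$-th exterior powers, $\det\mathbf i\colon\det W\to\det V=\mathcal O_X$ is an injective morphism of line bundles that intertwines the induced connection $\det\mathcal D$ with $\det D$; so it suffices to identify both $\det\mathbf i$ and $\det D$.

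First I would locate the image of $\det\mathbf i$. Over a neighbourhood $U$ of a point $x'\in S$ as in Proposition \ref{prop3}, in the splittings $V|_U=\bigoplus_i\mathcal F_i$ of \eqref{e24} and $W|_U=\bigoplus_i\mathcal G_i$ of \eqref{i2} with $\mathcal G_i=\mathcal F_i\otimes\mathcal O_U(-(i-1)x')$, the map $\mathbf i$ is the direct sum of the natural inclusions $\mathcal G_i\hookrightarrow\mathcal F_i$, hence $\det\mathbf i$ vanishes to order $\sum_{i=1}^r(i-1)=\tfrac{r(r-1)}2$ at $x'$. Thus $\det\mathbf i$ identifies $\det W$ with the subsheaf $\mathcal O_X\bigl(-\tfrac{r(r-1)}2 S\bigr)\subset\mathcal O_X$ of functions vanishing to order at least $\tfrac{r(r-1)}2$ along $S$. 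Since $X$ is compact and connected, any two isomorphisms of this line bundle differ by a nonzero constant, so $\det\mathbf i$ agrees with \eqref{e27} up to such a constant; as the de Rham connection is unchanged by constant rescaling, I may work with $\det\mathbf i$.

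It then remains to compute $\det D$. By the third condition of Definition \ref{def1}, $\det D$ is the trivial connection $d$ on $\det V=\mathcal O_X$. The induced connection on a determinant depends only on the trace of the connection matrix, so the off-diagonal terms $\gamma_i$ and $\alpha_{i,j}$ of \eqref{e25} are irrelevant, and $\det\mathcal D$ is simply the restriction of $d$ to the subsheaf $\det W\subset\mathcal O_X$ — which is by definition the de Rham logarithmic connection $f\mapsto df$ on $\mathcal O_X(-\tfrac{r(r-1)}2 S)$. This is exactly the assertion.

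The one point that genuinely requires checking is that $\det\mathcal D$ is this restriction, with the correct residue. I would verify it in the frame $\bigotimes_i(z^{i-1}s_i)$ of $\det W$, where $s_i$ frames $\mathcal F_i$ and $D_i s_i=\theta_i\otimes s_i$: the induced connection form is $\sum_i\theta_i+\bigl(\sum_i\tfrac{i-1}{z}\bigr)dz=\sum_i\theta_i+\tfrac{r(r-1)}{2z}\,dz$, while writing $\bigotimes_i s_i=u\,\mathbf 1$ for a local unit $u$ the triviality of $\det D$ forces $\sum_i\theta_i=du/u$. One then checks this equals $d\bigl(z^{r(r-1)/2}u\bigr)\big/\bigl(z^{r(r-1)/2}u\bigr)$, the de Rham form in the same frame; the unit $u$ cancels between the two expressions. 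The main obstacle is thus purely bookkeeping: confirming that the local twists producing the logarithmic poles of $\mathcal D$ accumulate on the determinant to exactly the residue $\tfrac{r(r-1)}2$ carried by the de Rham connection on $\mathcal O_X(-\tfrac{r(r-1)}2 S)$, with no leftover holomorphic contribution.
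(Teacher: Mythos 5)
Your proof is correct and takes essentially the same route as the paper's: pass to the top exterior power of the inclusion ${\mathbf i}$ from Corollary \ref{cor3}, identify $\wedge^r{\mathbf i}$ with the natural inclusion ${\mathcal O}_X\left(-\frac{r(r-1)}{2}S\right)\hookrightarrow {\mathcal O}_X$, and invoke condition (3) of Definition \ref{def1} together with the fact that $D$ restricts to $\mathcal D$ on ${\mathbf i}(J^{r-1}(Q)\otimes{\mathcal O}_X(-(r-1)S))$. Your extra steps --- the local computation of the vanishing order $\sum_{i=1}^r(i-1)=\frac{r(r-1)}{2}$ of $\det{\mathbf i}$, the observation that the identification \eqref{e27} can only differ from $\det{\mathbf i}$ by a nonzero constant (harmless for the de Rham connection), and the frame check of the residue --- simply make explicit what the paper's proof declares to be evident.
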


\begin{proof}
Consider the homomorphism $\mathbf i$ in Corollary \ref{cor3}. Let
$$\wedge^r {\mathbf i}\, :\, \det (J^{r-1}(Q)\otimes {\mathcal O}_X(-(r-1)S))\,=\,
{\mathcal O}_X\left(-\frac{r(r-1)}{2}S\right) \, \longrightarrow\, \det V\,=\, {\mathcal O}_X$$
be the corresponding homomorphism of the top exterior products. This homomorphism
$\bigwedge^r {\mathbf i}$ evidently coincides with the natural inclusion map of ${\mathcal O}_X
\left(-\frac{r(r-1)}{2}S\right)$ into ${\mathcal O}_X$. Now, since $D$ induces the connection
on $\det V$ given by the de Rham differential (see the third
statement in Definition \ref{def1}), and $D$ restricts to $\mathcal D$ on 
${\mathbf i}(J^{r-1}(Q)\otimes {\mathcal O}_X(-(r-1)S))$, the lemma follows.
\end{proof}

In Section \ref{sec6} we will determine all the logarithmic connections on the rank $r$ vector bundle
$J^{r-1}(Q)\otimes {\mathcal O}_X(-(r-1)S)$ that arise from branched $\text{SL}(r,{\mathbb C})$--opers
on $X$ with branching over $S$.

\section{Residues}

Take a point $y\, \in\, 
S$. The fiber of $K_X\otimes{\mathcal O}_X(S)$ over $y$ is identified with $\mathbb C$ by 
the Poincar\'e adjunction formula \cite[p.~146]{GH}. To explain this isomorphism
\begin{equation}\label{pa}
(K_X\otimes{\mathcal O}_X(S))_y \, \stackrel{\sim}{\longrightarrow}\, {\mathbb C}\, ,
\end{equation}
let $z$ be a holomorphic coordinate function on $X$ defined on an analytic open 
neighborhood of $y$ such that $z(y)\,=\, 0$. Then we have an isomorphism ${\mathbb C}\, 
\longrightarrow\, (K_X\otimes{\mathcal O}_X(S))_y$ that sends any $c\, \in\, \mathbb C$ to 
$c\cdot \frac{dz}{z}(y)\,\in\, (K_X\otimes{\mathcal O}_X(S))_y$. It is straightforward to 
check that this map ${\mathbb C}\, \longrightarrow\, (K_X\otimes{\mathcal O}_X(S))_y$ is 
independent of the choice of the holomorphic coordinate function $z$.

Let $D_W\, :\, W\, \longrightarrow\, W\otimes K_X\otimes{\mathcal O}_X(S)$ be a 
logarithmic connection on a holomorphic vector bundle $W$ on $X$. Consider the 
composition of homomorphisms
$$
W\, \xrightarrow{\,\ D_W\,\ }\, W\otimes K_X\otimes{\mathcal O}_X(S) \, \longrightarrow\,
(W\otimes K_X\otimes{\mathcal O}_X(S))_y\,=\, W_y\,.
$$
This composition of homomorphisms is evidently ${\mathcal O}_X$--linear and therefore 
is given by an endomorphism
$$
{\rm Res}(D_W,\,y)\, :\, W_y\, \longrightarrow\, W_y\, ;
$$
 called the \textit{residue} of $D_W$ at $y$.

If $r\,=\, \text{rank}(W)$ and $\lambda_1,\, \cdots,\, \lambda_r$ are the generalized eigenvalues, with multiplicity,
of ${\rm Res}(D_W,\,y)$, then the generalized eigenvalues of the local monodromy of $D_W$ around $y$ are
$$
\exp(-2\pi\sqrt{-1}\lambda_1),\, \exp(-2\pi\sqrt{-1}\lambda_1),\, \cdots,\, \exp(-2\pi\sqrt{-1}\lambda_r);
$$
see \cite{De}.

For convenience we use the notation
\begin{equation}\label{ej}
E\, :=\, \widehat{H}_{r} \, =\, J^{r-1}(Q)\otimes{\mathcal O}_X(-(r-1)S)
\end{equation}
(see \eqref{e23}).

Take a branched $\text{SL}(r,{\mathbb C})$--oper $(V,\, {\mathcal F},\, D)$ as in \eqref{gbo}.
As before, let $\mathcal D$ denote the logarithmic connection on $E$ (see \eqref{ej})
constructed in Proposition \ref{prop3} from $(V,\, {\mathcal F},\, D)$. Recall from
Remark \ref{rem3} that the singular locus of $\mathcal D$ is exactly $S$. The
following lemma describes the residues of $\mathcal D$.

\begin{lemma}\label{lem2}
For any point $x'\, \in\, S$, let
$$
{\rm Res}({\mathcal D},\, x')\, \in\, {\rm End}(E_{x'})
$$
be the residue of ${\mathcal D}$ at $x'$. Then the eigenvalues of ${\rm Res}({\mathcal D},\, x')$ are the
integers $\{0,\, 1,\, \cdots,\, r-2,\, r-1\}$, and the multiplicity of each of them is one. For any
$0\, \leq\, i\, \leq\, r-1$, the eigenspace of ${\rm Res}({\mathcal D},\, x')$ for the eigenvalue $i$ is
contained in the subspace
$$
(\widehat{H}_{i+1})_{x'}\, \subset\, E_{x'}
$$
(see \eqref{e23} and \eqref{ej}).
\end{lemma}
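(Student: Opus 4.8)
The plan is to work locally near a point $x' \in S$, using the explicit description of $\mathcal{D}$ developed in the proof of Proposition \ref{prop3}. Recall from there that over a small neighborhood $U$ of $x'$ with $U \cap S = \{x'\}$, we have the holomorphic splitting $V|_U = \bigoplus_{i=1}^r \mathcal{F}_i$ of \eqref{e24}, and the subsheaf $E|_U = J^{r-1}(Q|_U) \otimes \mathcal{O}_U(-(r-1)x')$ decomposes as $\bigoplus_{i=1}^r \mathcal{G}_i$ via \eqref{i2}, where $\mathcal{G}_i = \mathcal{F}_i \otimes \mathcal{O}_U(-(i-1)x')$. First I would fix a holomorphic coordinate $z$ on $U$ with $z(x')=0$, and write down the connection matrix of $\mathcal{D}$ with respect to this adapted frame, reading it off from the matrix \eqref{e25} for $D$ and the embeddings $\mathcal{G}_i \hookrightarrow \mathcal{F}_i$. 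The key point is that the frame of $\mathcal{G}_i$ differs from the frame of $\mathcal{F}_i$ by a factor of $z^{i-1}$.

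The heart of the computation is to track what this change of frame does to the diagonal connection operators $D_i$. The residue ${\rm Res}(\mathcal{D}, x')$ is by definition the endomorphism induced by composing $\mathcal{D}$ with evaluation of $K_X \otimes \mathcal{O}_X(S)$ at $x'$ via the Poincaré adjunction isomorphism \eqref{pa}, which identifies the fiber with $\mathbb{C}$ through $\frac{dz}{z}$. Next I would observe that, in the adapted frame, tensoring the frame of $\mathcal{F}_i$ by $z^{i-1}$ conjugates $D_i$ to a logarithmic connection whose local expression acquires a term $(i-1)\frac{dz}{z}$ from the Leibniz rule applied to $z^{i-1}$: concretely, if $s$ is a local frame of $\mathcal{F}_i$ then $\mathcal{D}(z^{i-1} s) = z^{i-1} D_i(s) + (i-1)z^{i-2}\,dz \otimes s = z^{i-1}\bigl(D_i(s) + (i-1)\tfrac{dz}{z}\otimes s\bigr)$. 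Since $D_i$ is a \emph{holomorphic} connection, its contribution to the residue vanishes upon evaluation, so the diagonal entry of ${\rm Res}(\mathcal{D}, x')$ on $\mathcal{G}_i$ equals $i-1$. This gives the eigenvalues $\{0, 1, \ldots, r-1\}$, each with multiplicity one, matching the claim once we re-index (the $i$-th subbundle contributes eigenvalue $i-1$).

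To pin down the eigenspace statement I would argue that ${\rm Res}(\mathcal{D}, x')$ is upper triangular in the frame respecting the filtration $\{\widehat{H}_j\}$. The off-diagonal entries $\gamma_i$ and $\alpha_{i,j}$ from \eqref{e25} either vanish at $x'$ (as $\gamma_i$ does by \eqref{gi}, hence the corresponding $\mathcal{G}$-level map stays holomorphic and contributes nothing to the residue) or land in lower-filtration pieces; combined with \eqref{i3}, which shows $\widetilde{\alpha}_{i,j}$ vanishes to order $j-i$, one checks that ${\rm Res}(\mathcal{D}, x')$ preserves the flag $(\widehat{H}_1)_{x'} \subset (\widehat{H}_2)_{x'} \subset \cdots \subset E_{x'}$ and acts on the $i$-th graded piece by multiplication by $i-1$. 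Since the diagonal entries $0, 1, \ldots, r-1$ are distinct, the generalized eigenspace for eigenvalue $i$ is one-dimensional, and triangularity with respect to the flag forces the eigenvector for eigenvalue $i$ to lie in $(\widehat{H}_{i+1})_{x'}$.

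The main obstacle I anticipate is the careful bookkeeping of how the off-diagonal terms behave after the twist by the various powers of $z$: one must verify that no off-diagonal entry survives on the diagonal of the residue (which would spoil the triangular structure) and that each surviving off-diagonal term indeed maps into a strictly lower-indexed piece, so that the flag is genuinely preserved rather than merely block-triangular in some weaker sense. This is exactly where the vanishing orders recorded in \eqref{gi} and \eqref{i3} must be invoked with the correct multiplicities; getting these exponents right is the delicate part, though it is a finite local linear-algebra verification rather than a conceptual difficulty.
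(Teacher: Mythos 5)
Your proposal is correct and follows essentially the same route as the paper's proof: both work in the local splitting $E\big\vert_U \,=\, \bigoplus_{i=1}^r {\mathcal G}_i$ with ${\mathcal G}_i \,=\, {\mathcal F}_i\otimes{\mathcal O}_U(-(i-1)x')$ from the proof of Proposition \ref{prop3}, observe that the twist by $z^{i-1}$ turns the holomorphic connection $D_i$ into a logarithmic connection with residue $i-1$, and note that the off-diagonal terms $\gamma_i$ and $\widetilde{\alpha}_{i,j}$ are pole-free as maps into ${\mathcal G}_\bullet\otimes K_X\big\vert_U$ and hence contribute nothing to the residue. The only cosmetic difference is that the paper concludes the residue is actually diagonal with respect to $\bigoplus_i ({\mathcal G}_i)_{x'}$, whereas you settle for flag-preservation plus distinct graded eigenvalues; both immediately give the eigenspace containment.
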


\begin{proof}
Let $L$ be a holomorphic line bundle on the open neighborhood $U\, \subset\, X$ of $x'$ (as
in \eqref{e24}) equipped with a holomorphic connection $D_L$. Then for any integer $k$,
the differential operator $D_L\big\vert_{U\setminus\{x'\}}$ on
$L\big\vert_{U\setminus\{x'\}}\,=\, (L\otimes {\mathcal O}_U(kx'))\big\vert_{U\setminus\{x'\}}$
extends to a logarithmic connection on $L\otimes {\mathcal O}_U(kx')$ over $U$. In fact it coincides
with the logarithmic connection on $L\otimes {\mathcal O}_U(kx')$ given by the holomorphic
connection $D_L$ on $L$ and the logarithmic connection on ${\mathcal O}_U(kx')$ defined
by the de Rham differential $d$. The residue, at
$x'$, of this logarithmic connection on $L\otimes {\mathcal O}_U(kx')$ is $-k$;
indeed, this follows immediately from the fact that the residue of the
logarithmic connection on ${\mathcal O}_U(kx')$ defined
by the de Rham differential $d$ is $-k$. In view of this, the
lemma follows from the expression of $D\big\vert_U$ in \eqref{e25} in terms of the direct sum
of line bundles ${\mathcal G}_i$ in \eqref{i2}. Recall the observation in the proof of Proposition
\ref{prop3} that the section $\gamma_i$ in \eqref{gi} produces a
holomorphic homomorphism
$$
{\mathcal G}_i\, \longrightarrow\, {\mathcal G}_{i+1}\otimes (K_X\big\vert_U).
$$
Therefore, $\gamma_i$ does not contribute to the residue ${\rm Res}({\mathcal D},\, x')$.
Since $\widetilde{\alpha}_{i,j}$ in \eqref{i3} does not have any pole as a homomorphism from
${\mathcal G}_j$ to ${\mathcal G}_i\otimes (K_X\big\vert_U)$, it also does not
contribute to the residue ${\rm Res}({\mathcal D},\, x')$.

Therefore, the residue of ${\mathcal D}$ at $x'$ is given by the residues on the 
logarithmic connections on ${\mathcal G}_j$, $1\, \leq\, j\, \leq\, r$, induced by the 
holomorphic connections $D_j$ on ${\mathcal F}_j$ in \eqref{e25}. From the above observation 
on the residue of the logarithmic connection on $L\otimes {\mathcal O}_U(kx')$ we know that 
the residues on this logarithmic connection ${\mathcal G}_j$ is $j-1$. This proves the 
lemma.
\end{proof}

We will end this section on residues by noting some general properties of it.

Let $W\, \longrightarrow\, X$ be a holomorphic vector bundle, and let $D_W$ be a logarithmic connection on $W$,
singular at $y\, \in\, X$. Assume that the residue ${\rm Res}(D_W,\, y)$ of
$D_W$ at $y$ is semisimple, meaning ${\rm Res}(D_W,\, y)$ is diagonalizable. Let $\lambda_1,\, \cdots,\,
\lambda_b$ be the eigenvalues of ${\rm Res}(D_W,\, y)$ (they need not be
of multiplicity one). For any $1\, \leq\, i\, \leq\, b$, let
$$
W^i_y\, \subset\, W_y
$$
be the eigenspace of ${\rm Res}(D_W,\, y)\, \in\, {\rm End}(W_y)$ for the eigenvalue
$\lambda_i$. For any given $1\,\leq\, k\, <\, b$, consider the following natural homomorphisms
$$
W \,\longrightarrow\, W_y \,\longrightarrow\, W_y\Big/\left(\bigoplus_{i=1}^k W^i_y\right)\, ;
$$
both are natural quotient maps.
The kernel of this composition of homomorphisms will be denoted by $\widetilde{W}$. So
$\widetilde{W}$ is a torsion-free coherent analytic sheaf on $X$ that fits in the following short exact sequence of
coherent analytic sheaves on $X$:
\begin{equation}\label{se}
0\, \longrightarrow\, \widetilde{W} \,\stackrel{\phi}{\longrightarrow}\, W \,\stackrel{q_1}{\longrightarrow}\,
W_y\Big/\left(\bigoplus_{i=1}^k W^i_y\right) \, \longrightarrow\,0\, .
\end{equation}

The following is straight-forward to check.

\begin{lemma}\label{lem5}
The logarithmic connection $D_W\, :\, W\, \longrightarrow\, W\otimes K_X\otimes{\mathcal O}_X(y)$
sends the subsheaf $\widetilde{W} \,\stackrel{\phi}{\hookrightarrow}\, W$ to $\widetilde{W}\otimes
K_X\otimes{\mathcal O}_X(y)$. Hence $D_W$ induces a logarithmic connection on $\widetilde{W}$.
\end{lemma}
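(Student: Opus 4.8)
The plan is to verify the claim in local coordinates around the singular point $y$, since away from $y$ the sheaf $\widetilde{W}$ coincides with $W$ and $D_W$ is an ordinary holomorphic connection, so there is nothing to check on $X\setminus\{y\}$. First I would choose a holomorphic coordinate $z$ centered at $y$, together with a local holomorphic frame $\{e_1,\ldots,e_r\}$ of $W$ that is adapted to the eigenspace decomposition of the residue: I want the frame to be chosen so that the fiber vectors $e_1(y),\ldots,e_r(y)$ are eigenvectors of ${\rm Res}(D_W,\,y)$, with the first $\sum_{i=1}^k \dim W^i_y$ of them spanning $\bigoplus_{i=1}^k W^i_y$. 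Because the residue is assumed semisimple, such an adapted frame exists; this is the key simplification that makes the assumption on diagonalizability enter the argument.

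Next I would write out the logarithmic connection matrix in this frame as $D_W e_j = \sum_i \omega_{ij}\, e_i$, where each connection one-form $\omega_{ij}$ is a section of $K_X\otimes{\mathcal O}_X(y)$, hence of the form $\omega_{ij} = a_{ij}(z)\,\frac{dz}{z}$ with $a_{ij}$ holomorphic near $y$. The residue endomorphism ${\rm Res}(D_W,\,y)$ is, up to the Poincaré-adjunction identification \eqref{pa}, the matrix $(a_{ij}(0))$. Diagonality of the residue in the adapted frame means $a_{ij}(0) = 0$ whenever $e_i(y)$ and $e_j(y)$ lie in different eigenspaces; in particular $a_{ij}(0)=0$ if $e_j(y)\in\bigoplus_{i'=1}^k W^{i'}_y$ but $e_i(y)\notin\bigoplus_{i'=1}^k W^{i'}_y$. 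The subsheaf $\widetilde{W}$ is, locally near $y$, generated by the frame vectors lying in the chosen eigenspaces together with $z\cdot e_j$ for the remaining ones; this follows from unwinding the definition of $\widetilde W$ as the kernel in \eqref{se}, since $q_1$ evaluates a section at $y$ and projects to the complementary eigenspaces.

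The main step is then to check that $D_W$ carries a local generator of $\widetilde W$ into $\widetilde W\otimes K_X\otimes{\mathcal O}_X(y)$. For a generator $e_j$ with $e_j(y)\in\bigoplus_{i'=1}^k W^{i'}_y$, the Leibniz rule gives $D_W e_j = \sum_i a_{ij}(z)\,e_i\otimes\frac{dz}{z}$; the potentially problematic terms are those with $e_i(y)$ in a complementary eigenspace, but for these $a_{ij}(0)=0$, so $a_{ij}(z) = z\,b_{ij}(z)$ with $b_{ij}$ holomorphic, and the corresponding term becomes $b_{ij}(z)\,e_i\otimes dz$, which already lies in $W\otimes K_X$ and hence in $\widetilde W\otimes K_X\otimes{\mathcal O}_X(y)$ after noting $z e_i$ is a local section of $\widetilde W$. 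For a generator $z e_j$ with $e_j(y)$ in a complementary eigenspace, I would apply Leibniz to obtain $D_W(z e_j) = e_j\otimes dz + z\,D_W e_j$; the first term lies in $W\otimes K_X$ and the second in $z\cdot(W\otimes K_X\otimes{\mathcal O}_X(y)) = W\otimes K_X\subset \widetilde W\otimes K_X\otimes{\mathcal O}_X(y)$. In both cases the image lands in $\widetilde W\otimes K_X\otimes{\mathcal O}_X(y)$, which is exactly the assertion. The hard part, such as it is, is purely bookkeeping: keeping track of which generators of $\widetilde W$ are frame vectors and which are $z$ times a frame vector, and confirming that the vanishing $a_{ij}(0)=0$ supplied by semisimplicity of the residue is precisely what is needed to absorb the off-diagonal terms that would otherwise produce a pole outside $\widetilde W$. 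Once the local computation is complete, the induced logarithmic connection on $\widetilde W$ is obtained by restriction, and its integrability is automatic since $\Omega^2_X=0$.
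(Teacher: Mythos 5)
Your proof is correct. The paper gives no argument for Lemma \ref{lem5} (it is simply declared ``straight-forward to check''), and your local computation --- choosing a frame $e_1,\ldots,e_r$ whose values at $y$ diagonalize the semisimple residue, identifying $\widetilde{W}$ near $y$ as the free sheaf on $e_1,\ldots,e_m,\,z e_{m+1},\ldots,z e_r$, and verifying via the Leibniz rule that the vanishing $a_{ij}(0)=0$ absorbs the off-diagonal poles --- is exactly the standard verification the authors intended, carried out correctly and in full.
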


The logarithmic connection on $\widetilde{W}$ induced by $D_W$ will be denoted by 
$\widetilde{D}$.

We will
now describe the residue of $\widetilde{D}$ at the singular point $y$.

Let
\begin{equation}\label{se2}
0\, \longrightarrow\, \text{kernel}(\phi(y))\, \longrightarrow\, \widetilde{W}_y
\, \xrightarrow{\ \phi(y)\ }\, W_y \,\stackrel{q_1}{\longrightarrow}\,
\text{cokernel}(\phi(y))\,=\, W_y\Big/\left(\bigoplus_{i=1}^k W^i_y\right) \, \longrightarrow\, 0
\end{equation}
be the exact sequence of vector spaces obtained by restricting, to the point $y$, the short exact sequence
of coherent analytic sheaves in \eqref{se}. Note that the homomorphism of fibers of vector bundles
corresponding to an injective homomorphism of coherent analytic sheaves need not be injective, so
$\text{kernel}(\phi(y))$ may be nonzero.

We will now show that there is a canonical isomorphism
\begin{equation}\label{see3}
\left(\bigoplus_{i=k+1}^b W^i_y\right)\otimes (K_X)_y\,
\stackrel{\sim}{\longrightarrow}\,\text{kernel}(\phi(y))\, .
\end{equation}

To prove \eqref{see3}, take any $w\, \in\, \left(\bigoplus_{i=k+1}^b W^i_y\right)\otimes (K_X)_y$. Using the
isomorphism $(K_X)_y\,=\, {\mathcal O}_Y(-y)_y$ (see \eqref{pa}), we have
$$
w\, \in\, \left(\bigoplus_{i=k+1}^b W^i_y\right)\otimes{\mathcal O}_Y(-y)_y\,=\,
\bigoplus_{i=k+1}^b (W^i\otimes {\mathcal O}_Y(-y))_y
\, .
$$
Now take a holomorphic section defined on an analytic open neighborhood $U\, \subset\, X$
$$
s\, \in\, H^0(U,\, (W\big\vert_U)\otimes {\mathcal O}_U(-y))
$$
such that $s(y)\,=\, w$. We note that $q_1(s)\,=\, 0$, where $q_1$ is the projection in \eqref{se}.
Therefore, from \eqref{se} it follows that $s$ is the image of a holomorphic section of $\widetilde{W}\big\vert_U$
under the homomorphism $\phi$ in \eqref{se}.
Let
$$
\widetilde{s}\, \in\, H^0(U,\, \widetilde{W}\big\vert_U)
$$
be the unique holomorphic section such that $\phi(\widetilde{s})\,=\, s$.
It can be shown that the evaluation
$$
\widetilde{s}(y)\, \in\, \widetilde{W}_y
$$
is independent of the choice of the above section $s\, \in\, H^0(U,\, (W\big\vert_U)\otimes
{\mathcal O}_U(-y))$ satisfying $s(y)\,=\, w$. Indeed, for another holomorphic section
$$
t\, \in\, H^0(U,\, (W\big\vert_U)\otimes {\mathcal O}_U(-y))
$$
with $t(y)\,=\, w$, we have
$$
s-t\, \in\, H^0(U,\, (W\big\vert_U)\otimes {\mathcal O}_U(-2y))\, ,
$$
and hence
\begin{equation}\label{se4}
\widetilde{s}-\widetilde{t}\, \in\, H^0(U,\, (\widetilde{W}\big\vert_U)\otimes {\mathcal O}_U(-y))\, ,
\end{equation}
where $\widetilde{t}\, \in\, H^0(U,\, \widetilde{W}\big\vert_U)$ is the unique section for which $\phi(\widetilde{t})
\,=\, t$. From \eqref{se4} it follows immediately that $\widetilde{s}(y)\,=\, \widetilde{t}(y)$, and
hence $\widetilde{s}(y)\, \in\, \widetilde{W}_y$ is independent of the choice of
the section $s\, \in\, H^0(U,\, (W\big\vert_U)\otimes {\mathcal O}_U(-y))$ satisfying $s(y)\,=\, w$.

For the homomorphism $\phi(y)$ in \eqref{se2} we have
$$
\phi(y)(\widetilde{s}(y))\,=\, 0\, ,
$$
because $\phi(\widetilde{s})\, =\, s\, \in\, H^0(U,\, (W\big\vert_U)\otimes {\mathcal O}_U(-y))$.
Therefore, from \eqref{se2} we conclude that
$$
\widetilde{s}(y)\, \in\, \text{kernel}(\phi(y))\, .
$$

The isomorphism in \eqref{see3} sends any $$w\, \in\, \left(\bigoplus_{i=k+1}^b 
W^i_y\right)\otimes (K_X)_y$$ to $\widetilde{s}(y)\, \in\, \text{kernel}(\phi(y))$ 
constructed above from it.

The following lemma is a straight-forward consequence of the construction of residue of a
logarithmic connection.

\begin{lemma}\label{lem6}
The residue ${\rm Res}(\widetilde{D},\, y)$ of the logarithmic connection $\widetilde{D}$
on $\widetilde{W}$ (see Lemma \ref{lem5}) has the following properties:
\begin{enumerate}
\item The eigenvalues of ${\rm Res}(\widetilde{D},\, y)$ are $\{\lambda_i\}_{i=1}^k \bigcup
\{\lambda_i+1\}_{i=k+1}^b$.

\item For any $k+1\, \leq\, i\, \leq\, b$, the eigenspace of ${\rm Res}(\widetilde{D},\, y)$ for
the eigenvalue $\lambda_i+1$ is the subspace $W^i_y\otimes (K_X)_y\, \subset\,
\widetilde{W}_y$ (see \eqref{see3} and \eqref{se2}).

\item For any $1\, \leq\, i\, \leq\, k$, the eigenspace of ${\rm Res}(\widetilde{D},\, y)$ for
the eigenvalue $\lambda_i$ is taken isomorphically to the eigenspace $W^i_y$ by the homomorphism $\phi(y)$ in
\eqref{se2}.
\end{enumerate}
\end{lemma}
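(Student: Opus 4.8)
The plan is to reduce the statement to a local computation in a neighbourhood $U$ of $y$, since both the sheaf $\widetilde{W}$ and the residue of $\widetilde{D}$ depend only on the germ of $(W,\, D_W)$ at $y$. First I would fix a holomorphic coordinate $z$ with $z(y)\,=\,0$ and, using that $\mathrm{Res}(D_W,\, y)$ is semisimple, a local holomorphic frame $e_1,\, \ldots,\, e_r$ of $W$ whose values $e_j(y)$ are eigenvectors of the residue, ordered so that $e_1,\, \ldots,\, e_p$ span $\bigoplus_{i=1}^k W^i_y$ and $e_{p+1},\, \ldots,\, e_r$ span $\bigoplus_{i=k+1}^b W^i_y$. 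In this frame $D_W\,=\, d+\Omega$ with $\Omega\,=\, R\,\frac{dz}{z}+\Theta$, where $R$ is the constant diagonal residue matrix, whose entries are the corresponding eigenvalues $\lambda_i$, and $\Theta$ is a holomorphic matrix-valued $1$-form. The description of $\widetilde{W}$ as the kernel of $W\,\to\, W_y/\bigoplus_{i\leq k}W^i_y$ shows that $e_1,\, \ldots,\, e_p,\, ze_{p+1},\, \ldots,\, ze_r$ is a local holomorphic frame of $\widetilde{W}$; in it the evaluation $\phi(y)$ is the identity on the span of the first $p$ vectors and annihilates the last $r-p$, whose values at $y$ fill out $\mathrm{kernel}(\phi(y))$ and match $\big(\bigoplus_{i>k}W^i_y\big)\otimes (K_X)_y$ under the isomorphism \eqref{see3}.

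Next I would compute the connection matrix $\widetilde{\Omega}$ of $\widetilde{D}$ in this new frame by the gauge rule $\widetilde{\Omega}\,=\, g^{-1}\Omega g+g^{-1}dg$ for the diagonal gauge $g\,=\,\mathrm{diag}(1,\, \ldots,\, 1,\, z,\, \ldots,\, z)$. The term $g^{-1}dg$ adds $\frac{dz}{z}$ to each of the last $r-p$ diagonal entries, which is precisely the source of the integer shift by $1$, while $g^{-1}\Omega g$ rescales the off-diagonal entries by powers of $z$. Reading off the residue, I expect a block lower-triangular matrix with diagonal blocks $R_1$ and $R_2+I$, where $R_1,\, R_2$ are the restrictions of $R$ to the retained and the modified eigenspaces, and with lower-left block $B$ recording the evaluation at $y$ of the modified-row/retained-column entries of $\Theta$. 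Part (1) is then immediate, since the characteristic polynomial of a block-triangular matrix is the product over its diagonal blocks: the eigenvalues are $\{\lambda_i\}_{i\leq k}\cup\{\lambda_i+1\}_{i>k}$.

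For the eigenspaces I would solve the eigenvector equations for this block-triangular residue. A vector written as $(v_1,\, v_2)$ with eigenvalue $\lambda_i+1$, $i>k$, forces $v_1\,=\, 0$ as soon as $\lambda_i+1$ is not a retained eigenvalue, leaving $v_2$ in the $\lambda_i$-eigenspace of $R_2$; so the eigenspace is $\{0\}\oplus W^i_y$, which is exactly $W^i_y\otimes (K_X)_y$ under the identification of the previous paragraph, giving part (2). For eigenvalue $\lambda_i$, $i\leq k$, the lower block of the equation determines $v_2\,=\, -(R_2+I-\lambda_i)^{-1}Bv_1$ uniquely from $v_1$ in the $\lambda_i$-eigenspace $W^i_y$ of $R_1$, and $\phi(y)$ sends $(v_1,\, v_2)$ to $v_1$; this yields the claimed isomorphism onto $W^i_y$ of part (3).

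The main obstacle I anticipate is the off-diagonal block $B$. Unlike $\mathrm{Res}(D_W,\, y)$, the residue of $\widetilde{D}$ need not be semisimple, because multiplying the modified frame vectors by $z$ converts the holomorphic off-diagonal terms $\Theta_{ij}$ (with $i>p$, $j\leq p$) into genuine simple poles. What rescues the clean statement is that $B$ lies strictly below the diagonal blocks, so it affects neither the eigenvalues nor, provided the shifted values $\lambda_i+1$ ($i>k$) remain distinct from the retained values $\lambda_{i'}$ ($i'\leq k$), the eigenspaces. I would therefore record this disjointness of the two sets of eigenvalues as the hypothesis under which the eigenspace descriptions hold verbatim; it is met, for instance, by the residues computed in Lemma \ref{lem2}, where the retained and shifted eigenvalues occupy disjoint sets of integers. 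Should two such eigenvalues collide, $B$ can create a Jordan block and the eigenspace description would have to be amended.
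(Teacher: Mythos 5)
Your local computation is correct, and it is in fact more than the paper itself offers: Lemma \ref{lem6} is stated there with no proof at all (it is declared ``a straight-forward consequence of the construction of residue''). Your argument --- an eigenframe $e_1,\ldots,e_r$ adapted to the retained and modified eigenspaces, $\Omega=R\,\frac{dz}{z}+\Theta$ with $\Theta$ holomorphic, the frame $e_1,\ldots,e_p,ze_{p+1},\ldots,ze_r$ for $\widetilde W$, and the gauge rule giving the block lower-triangular residue $\left(\begin{smallmatrix} R_1 & 0\\ B & R_2+I\end{smallmatrix}\right)$ --- is exactly the intended verification, and your caveat is genuine, not cosmetic. For instance, with $r=2$, $\Omega=\mathrm{diag}(1,0)\frac{dz}{z}+\left(\begin{smallmatrix}0&0\\ dz&0\end{smallmatrix}\right)$ and the eigenvalue-$1$ line retained, the induced residue is $\left(\begin{smallmatrix}1&0\\ 1&1\end{smallmatrix}\right)$, whose only eigenline is $\ker\phi(y)$; part (3) of the lemma then fails outright. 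So parts (2) and (3) as printed do require your disjointness hypothesis, and your proof of all three parts under that hypothesis is sound.

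Where your proposal goes wrong is its final claim that the disjointness is met in this paper's setting; it is not, and the point matters. In the proof of Theorem \ref{thm1} the lemma is first applied to $\widetilde{\mathbb D}$ on $\mathcal W$, whose residue has simple eigenvalues $\{0,-1,\ldots,1-r\}$, retaining only the eigenvalue-$0$ line: the shifted set is $\{0,-1,\ldots,2-r\}$, so the retained $0$ collides with the shifted $-1+1=0$ (the paper itself records that $0$ then has multiplicity two), and the same collision recurs at every later step of the induction. In the collision case only part (1) survives verbatim; the paper's subsequent assertion that $\dim H_1(x')=2$ is a geometric-multiplicity statement that your coupling block $B$ can obstruct, and that obstruction is precisely what the invariants $M_j({\mathbb D},x')$ of \eqref{mo} and Proposition \ref{prop4} are designed to detect --- indeed, if the coupling always vanished, condition (2) of Theorem \ref{thm1} would be vacuous. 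Consequently the right repair of Lemma \ref{lem6} is not to impose disjointness, which would render it inapplicable exactly where it is used, but to weaken (2) to ``contains $W^i_y\otimes (K_X)_y$'' and (3) to ``$\phi(y)$ maps the eigenspace into $W^i_y$'' in the collision case, with the deviation governed by $B$. Your computation is exactly the right tool for seeing all of this; the proposal only misjudges which case the paper actually needs.
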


\section{Local monodromy}

A logarithmic connection has local monodromy around a singular point of the connection. 
The two holomorphic vector bundles $V$ and
\begin{equation}\label{rc2}
E\,:=\, J^{r-1}(Q)\otimes{\mathcal O}_X(-(r-1)S)
\end{equation}
are identified over $X_0$ \eqref{e11} (see Corollary \ref{cor3}), and this 
identification takes the holomorphic connection ${\mathcal D}\big\vert_{X_0}$ on 
$E\big\vert_{X_0}$ in Lemma \ref{lem2} to the holomorphic connection $D\big\vert_{X_0}$ on 
$V\big\vert_{X_0}$ in \eqref{gbo}. Now $D\big\vert_{X_0}$ does not have local monodromy 
around any point $x'\,\in\, S$ because $D$ is a holomorphic connection on $V$. Hence the 
logarithmic connection ${\mathcal D}$ has trivial local monodromy around every point of 
$S$, as well. In this section we will reformulate this condition of vanishing of local 
monodromies.

Let
\begin{equation}\label{lc}
{\mathbb D}\, :\, E\, \longrightarrow\, E\otimes K_X\otimes{\mathcal O}_X(S)
\end{equation}
be a logarithmic connection on $E$ (see \eqref{rc2}) singular over $S$ that satisfies the following
four conditions:
\begin{enumerate}
\item ${\mathbb D}(\widehat{H}_i)\, \subset\, \widehat{H}_{i+1}\otimes K_X\otimes{\mathcal O}_X(S)$,
for all $1\, \leq\, i\, \leq\, r-1$, where $\{\widehat{H}_i\}_{i=0}^r$ is the filtration of
$J^{r-1}(Q)\otimes {\mathcal O}_X(-(r-1)S)$ in \eqref{e23}. This
implies that the second fundamental forms of the subbundles $\widehat{H}_i$
produce a section
$${\rm SF}(\mathbb{D},\, i)\, \in\, H^0(X,\, {\mathcal O}_X(S))$$
as in \eqref{x2} for every $1\, \leq\, i\, \leq\, r-1$.

\item For all $1\, \leq\, i\, \leq\, r-1$, the above section
${\rm SF}(\mathbb{D},\, i)\, \in\, H^0(X,\, {\mathcal O}_X(S))$ coincides with
the section of ${\mathcal O}_X(S)$ given by the constant function $1$ on $X$.

\item For every $x'\, \in\, S$, the eigenvalues of ${\rm Res}({\mathbb D},\, x')$ are the
integers $\{0,\, 1,\, \cdots,\, r-2,\, r-1\}$. Note that the multiplicity of each of
the eigenvalues is one.

\item For all $0\, \leq\, i\, \leq\, r-1$ and every $x'\, \in\, S$, the eigenspace of
${\rm Res}({\mathbb D},\, x')$ for the eigenvalue $i$ is contained in the subspace
$$
(\widehat{H}_{i+1})_{x'}\, \subset\, E_{x'}
$$
(see \eqref{e23} and \eqref{ej}).
\end{enumerate}
In other words, ${\mathbb D}$ shares all the properties of ${\mathcal D}$ stated in Lemma 
\ref{lem2}, Corollary \ref{cor4} and Lemma \ref{lem-sf}.

\begin{remark}\label{rem-ch2}
Note that we did not impose the condition that all the local monodromies of $\mathbb D$ are trivial, despite the
fact that $\mathcal D$ enjoys this property (this was shown above).
\end{remark}

Let
\begin{equation}\label{es1}
L_{x'}(i)\, \subset\, E_{x'}
\end{equation}
be the eigenline for the eigenvalue $0\, \leq\, i\, \leq\, r-1$ of the residue ${\rm Res}({\mathbb D},\, x')$
at $x'\, \in\, S$. So we have a decomposition of the fiber $E_{x'}$
\begin{equation}\label{d}
E_{x'}\,=\, \bigoplus_{i=0}^{r-1} L_{x'}(i)\, ,
\end{equation}
where $L_{x'}(i)$ is the subspace in \eqref{es1}.
The above condition that the eigenspace of ${\rm Res}({\mathbb D},\, x')$ for the
eigenvalue $i$ is contained in the subspace $(\widehat{H}_{i+1})_{x'}\, \subset\, E_{x'}$ implies that
$$
(\widehat{H}_{j+1})_{x'}\,=\, \bigoplus_{i=0}^{j} L_{x'}(i)
$$
for all $0\, \leq\, j\, \leq\, r-1$; so we have
\begin{equation}\label{e26}
L_{x'}(j)\,=\, (\widehat{H}_{j+1})_{x'}/(\widehat{H}_j)_{x'}\, .
\end{equation}

We recall from \eqref{eq} that for the filtration of $E$ given in \eqref{e23},
$$
\widehat{H}_j/\widehat{H}_{j-1}\,=\, Q\otimes K^{\otimes (r-j)}_X\otimes{\mathcal O}_X(-(r-1)S)
$$
for all $1\, \leq\, j\, \leq\, r$.

Take any point $x'\, \in\, S$, and also take any integer $2\, \leq\, j\, \leq\, r$. We will construct,
from ${\mathbb D}$, an element
\begin{equation}\label{mo}
M_j({\mathbb D},\, x')\, \in\, {\rm Hom}((\widehat{H}_j/\widehat{H}_{j-1})_{x'}\otimes (T_{x'}X)^{\otimes (j-1)},\,
(\widehat{H}_{j-1}/\widehat{H}_{j-2})_{x'}\otimes (T_{x'}X)^{\otimes (j-2)})
\end{equation}
$$
=\, {\rm Hom}((\widehat{H}_j/\widehat{H}_{j-1})_{x'},\,(\widehat{H}_{j-1}/\widehat{H}_{j-2})_{x'})\otimes
(K_X)_{x'} \, =\,(K^{\otimes 2}_X)_{x'}\, ,
$$
where $T_{x'}X$ is the holomorphic tangent space to $X$ at $x'$; see \eqref{eq} for
the equality $${\rm Hom}((\widehat{H}_j/\widehat{H}_{j-1})_{x'},\,
(\widehat{H}_{j-1}/\widehat{H}_{j-2})_{x'})\, =\,(K_X)_{x'}$$ in \eqref{mo}.

To construct $M_j({\mathbb D},\, x')$, first note that the fiber ${\mathcal O}_X((j-1)S)_{x'}$ is identified with
$(T_{x'}X)^{\otimes (j-1)}$ using the Poincar\'e
adjunction formula (see \eqref{pa}). This produces an isomorphism
\begin{equation}\label{p1}
\eta\, :\, (\widehat{H}_j/\widehat{H}_{j-1})_{x'}\otimes {\mathcal O}_X((j-1)S)_{x'}\,
\stackrel{\sim}{\longrightarrow}\, (\widehat{H}_j/\widehat{H}_{j-1})_{x'}\otimes (T_{x'}X)^{\otimes (j-1)}
\end{equation}

Take any
\begin{equation}\label{w}
w\, \in\, (\widehat{H}_j/\widehat{H}_{j-1})_{x'}\otimes (T_{x'}X)^{\otimes (j-1)}\, .
\end{equation}
Using \eqref{e26} and $\eta$ (constructed in \eqref{p1}) we have
$$(\widehat{H}_j/\widehat{H}_{j-1})_{x'}\otimes (T_{x'}X)^{\otimes (j-1)}\,=\, L_{x'}(j-1)\otimes
{\mathcal O}_X((j-1)x')_{x'}\, .$$ Let
$$
w'\, \in\, L_{x'}(j-1)\otimes {\mathcal O}_X((j-1)x')_{x'}
$$
be the element that corresponds to $w$ (see \eqref{w}) by this isomorphism.

Now we choose a holomorphic section
\begin{equation}\label{ww}
\widetilde{w}\, \in\, H^0\left(U,\, (E\big\vert_U)\otimes {\mathcal O}_U((j-1)x')\right)\, ,
\end{equation}
defined on some sufficiently small analytic neighborhood
$U\, \subset\, X$ of $x'$, such that $$\widetilde{w}(x')\,=\, w'\, .$$

Let $\widehat{\mathbb D}$ be the logarithmic connection on $(E\big\vert_U)\otimes {\mathcal O}_U((j-1)x')$
induced by the logarithmic connection ${\mathbb D}\big\vert_U$ on $E\big\vert_U$ and
the logarithmic connection on ${\mathcal O}_U((j-1)x')$ given by the de Rham differential $d$. Consider the
residue ${\rm Res}(\widehat{\mathbb D},\, x')\,\in\, {\rm End}(E_{x'}\otimes
{\mathcal O}_U((j-1)x')_{x'})$ of the logarithmic connection $\widehat{\mathbb D}$ at the
point $x'$. It can be shown that the line
$$
L_{x'}(j-1)\otimes {\mathcal O}_U((j-1)x')_{x'}\, \subset\, E_{x'}\otimes {\mathcal O}_U((j-1)x')_{x'}
$$
(see \eqref{d}) is contained in the eigenspace of ${\rm Res}(\widehat{\mathbb D},\, x')$ for the eigenvalue
$0$. Indeed, ${\rm Res}({\mathbb D},\, x')$ acts on $L_{x'}(j-1)$ as multiplication by $j-1$ and the
logarithmic connection on ${\mathcal O}_U((j-1)x')$ given by the de Rham differential $d$ has the property
that its residue at $x'$ is $1-j$. Hence $L_{x'}(j-1)\otimes {\mathcal O}_U((j-1)x')_{x'}$
is contained in the eigenspace of ${\rm Res}(\widehat{\mathbb D},\, x')$ for the eigenvalue $0$. Actually,
$L_{x'}(j-1)\otimes {\mathcal O}_U((j-1)x')_{x'}$ is the eigenspace of
${\rm Res}(\widehat{\mathbb D},\, x')$ for the eigenvalue $0$.

Since the section $\widetilde{w}$ in \eqref{ww} satisfies the condition
$\widetilde{w}(x')\, \in\, L_{x'}(j-1)\otimes {\mathcal O}_U((j-1)x')_{x'}$, from the above property
of ${\rm Res}(\widehat{\mathbb D},\, x')$ that $L_{x'}(j-1)\otimes {\mathcal O}_U((j-1)x')_{x'}$
is contained in the eigenspace of ${\rm Res}(\widehat{\mathbb D},\, x')$ for the eigenvalue $0$ it follows that
\begin{equation}\label{d1}
{\mathbb D}(\widetilde{w})\, \in\, H^0\big(U,\, (E\big\vert_U)\otimes K_U\otimes
{\mathcal O}_U((j-1)x')\big)\, ,
\end{equation}
where $K_U\,:=\, K_X\big\vert_U$.

The decomposition of $E_{x'}$ in \eqref{d} gives a decomposition
\begin{equation}\label{d2}
E_{x'}\otimes (K_U\otimes {\mathcal O}_U((j-1)x'))_{x'} \,=\, \bigoplus_{i=0}^{r-1} L_{x'}(i)
\otimes (K_U\otimes {\mathcal O}_U((j-1)x'))_{x'}\, .
\end{equation}
Let
\begin{equation}\label{d3}
\beta_{j-2}(w)\, \in\, L_{x'}(j-2)\otimes (K_U\otimes {\mathcal O}_U((j-1)x'))_{x'}
\end{equation}
be the component of ${\mathbb D}(\widetilde{w})(x')\, \in\, (E\otimes K_U\otimes {\mathcal O}_U((j-1)x'))_{x'}$
(see \eqref{d1}) in
$$L_{x'}(j-2)\otimes (K_U\otimes {\mathcal O}_U((j-1)x'))_{x'}\, \subset\,
E_{x'}\otimes (K_U\otimes {\mathcal O}_U((j-1)x'))_{x'}
$$
with respect to the decomposition in \eqref{d2}. Since $L_{x'}(j-2)\,=\,
({\widehat H}_{j-1})_{x'}/({\widehat H}_{j-2})_{x'}$ (see \eqref{e26}), and
${\mathcal O}_U((j-1)x'))_{x'}\,=\, (T_{x'}X)^{\otimes (j-1)}$ (see \eqref{pa}), the element
$\beta_{j-2}(w)$ in \eqref{d3} is also an element
\begin{equation}\label{d4}
\beta_{j-2}(w)\, \in\, ({\widehat H}_{j-1}/{\widehat H}_{j-2})_{x'} \otimes (T_{x'}X)^{\otimes (j-2)}.
\end{equation}

The map $M_j({\mathbb D},\, x')$ in \eqref{mo} sends the element $w$ in \eqref{w} to
$\beta_{j-2}(w)$ constructed in \eqref{d4}.

But we need to show that this map is well-defined
in the sense that $\beta_{j-2}(w)$ depends only on $w$, in other words, $\beta_{j-2}(w)$ is independent
of the choice of the section $\widetilde{w}$ in \eqref{ww}. The following lemma shows that
$\beta_{j-2}(w)$ depends only on $w$.

\begin{lemma}\label{lem3}
The element $\beta_{j-2}(w)$ constructed in \eqref{d4} does not depend on the choice
of the section $\widetilde w$ in \eqref{ww}.
\end{lemma}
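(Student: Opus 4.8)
The plan is to reduce the independence statement to a statement about the difference of two choices. Suppose $\widetilde{w}_1$ and $\widetilde{w}_2$ are two sections as in \eqref{ww}, both taking the value $w'$ at $x'$, and set $s\, :=\, \widetilde{w}_1-\widetilde{w}_2$. Then $s$ is a holomorphic section of $(E\big\vert_U)\otimes {\mathcal O}_U((j-1)x')$ vanishing at $x'$. Since $\beta_{j-2}$ is extracted linearly — it is the $L_{x'}(j-2)$--component of $\widehat{\mathbb D}(\widetilde{w})(x')$ in the decomposition \eqref{d2} — and since $\widehat{\mathbb D}(s)\,=\, \widehat{\mathbb D}(\widetilde{w}_1)-\widehat{\mathbb D}(\widetilde{w}_2)$ again lies in $H^0(U,\, (E\big\vert_U)\otimes K_U\otimes {\mathcal O}_U((j-1)x'))$ by \eqref{d1}, so that its evaluation at $x'$ is meaningful, it suffices to show that the $L_{x'}(j-2)$--component of $\widehat{\mathbb D}(s)(x')$ vanishes.

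The next step is a local computation with the Leibniz identity \eqref{e1}. I would fix a holomorphic coordinate $z$ centred at $x'$ and, using $s(x')\,=\, 0$, write $s\,=\, z\sigma$ with $\sigma$ a holomorphic section of $(E\big\vert_U)\otimes {\mathcal O}_U((j-1)x')$ near $x'$. Writing $\widehat{\mathbb D}$ locally as $d+A\,\frac{dz}{z}$ with $A(x')\,=\,{\rm Res}(\widehat{\mathbb D},\, x')$, the Leibniz rule gives $\widehat{\mathbb D}(s)\,=\,(\sigma+z\sigma'+A\sigma)\,dz$, whence, after evaluation at $x'$,
$$
\widehat{\mathbb D}(s)(x')\,=\, \big({\rm Id}+{\rm Res}(\widehat{\mathbb D},\, x')\big)(\sigma(x'))\otimes dz(x')\, .
$$
Here the term $\sigma\otimes dz$ coming from differentiating $z$ produces the ${\rm Id}$, while the factor $z$ cancels the pole of $\widehat{\mathbb D}(\sigma)$ and leaves precisely the residue acting on $\sigma(x')$. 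A short check confirms that $\sigma(x')\otimes dz(x')$ and ${\rm Res}(\widehat{\mathbb D},\, x')(\sigma(x'))\otimes dz(x')$ are independent of the choice of $z$, so the displayed formula is intrinsic.

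Finally I would invoke the eigenvalue bookkeeping already used in the proof of Lemma \ref{lem2}: the residue at $x'$ of the de Rham connection on ${\mathcal O}_U((j-1)x')$ is $-(j-1)$, so on the line $L_{x'}(i)\otimes {\mathcal O}_U((j-1)x')_{x'}$ the operator ${\rm Res}(\widehat{\mathbb D},\, x')$ acts by $i-(j-1)$, and hence ${\rm Id}+{\rm Res}(\widehat{\mathbb D},\, x')$ acts by the scalar $i-j+2$. For $i\,=\, j-2$ this scalar is $0$. Since ${\rm Id}+{\rm Res}(\widehat{\mathbb D},\, x')$ preserves the decomposition \eqref{d2}, the $L_{x'}(j-2)$--component of $\widehat{\mathbb D}(s)(x')$ equals $0$ times the $L_{x'}(j-2)$--component of $\sigma(x')$, so it vanishes. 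This yields $\beta_{j-2}(\widetilde{w}_1)\,=\,\beta_{j-2}(\widetilde{w}_2)$ and proves the lemma. The conceptual heart of the argument is this eigenvalue coincidence $i-j+2\,=\,0$ at $i\,=\, j-2$; the only point requiring care is the Leibniz computation of the second paragraph and the verification that the resulting expression is evaluated in the correct fibre, which I expect to be routine once the coordinate-independence noted above is recorded.
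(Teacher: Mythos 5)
Your proof is correct and follows essentially the same route as the paper's: both reduce to the difference $t=\widetilde{w}_1-\widetilde{w}_2$ vanishing at $x'$, and both show that the $L_{x'}(j-2)$--component of $\widehat{\mathbb D}(t)(x')$ vanishes because the relevant residue operator acts on the $L_{x'}(i)$--summand by the scalar $i-j+2$, which is zero exactly when $i=j-2$. The only difference is presentational: the paper packages your operator ${\rm Id}+{\rm Res}(\widehat{\mathbb D},\,x')$ invariantly as the residue of the connection $\widehat{\mathbb D}_1$ induced on the twisted-down sheaf $(E\big\vert_U)\otimes{\mathcal O}_U((j-2)x')$, whereas you extract it from the Leibniz rule in a local coordinate.
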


\begin{proof}
We may replace $\widetilde w$ in \eqref{ww} by $\widetilde{w}+t$, where
$$
t\, \in\, H^0\big(U,\, (E\big\vert_U)\otimes {\mathcal O}_U((j-1)x')\big)
$$
with $t(x')\,=\, 0$, where $E$ is defined in \eqref{ej}. Let 
$$
\beta^t_{j-2}(w) \, \in\, ({\widehat H}_{j-1}/{\widehat H}_{j-2})_{x'} \otimes (T_{x'}X)^{\otimes (j-2)}
$$
be the element constructed as in \eqref{d4} after substituting $\widetilde{w}+t$ in place
of $\widetilde{w}$ in the construction of $\beta_{j-2}(w)$. To prove the lemma we need to show that
\begin{equation}\label{s}
\beta^t_{j-2}(w)\,=\, \beta_{j-2}(w)\, .
\end{equation}

To prove \eqref{s}, first note that
\begin{equation}\label{s0}
t\,\in\, H^0\big(U,\, (E\big\vert_U)\otimes {\mathcal O}_U((j-2)x')\big)\, \subset
H^0\big(U,\, (E\big\vert_U)\otimes {\mathcal O}_U((j-1)x')\big)
\end{equation}
because of the given condition that $t(x')\,=\, 0$.

Let $\widehat{\mathbb D}_1$ be the logarithmic connection on $(E\big\vert_U)\otimes {\mathcal O}_U((j-2)x')$
given by the logarithmic connection ${\mathbb D}\big\vert_U$ on $E\big\vert_U$
and the logarithmic connection on ${\mathcal O}_U((j-2)x')$ given by the de Rham differential $d$.
We note that $\widehat{\mathbb D}_1$ is simply the restriction of
the logarithmic connection $\widehat{\mathbb D}$ to the subsheaf
$$
(E\big\vert_U)\otimes {\mathcal O}_U((j-2)x')\, \subset\,
(E\big\vert_U)\otimes {\mathcal O}_U((j-1)x')\, .
$$
{}From \eqref{s0} we have
$$
\widehat{\mathbb D}_1(t)\, \in\, H^0\big(U,\, (E\big\vert_U)\otimes K_U\otimes {\mathcal O}_U((j-1)x')\big)\, .
$$
Using the isomorphism in \eqref{pa}, the evaluation, at $x'$, of this section $\widehat{\mathbb D}_1(t)$
is considered as an element of
\begin{equation}\label{s1}
\widehat{\mathbb D}_1(t)(x')\, \in\, E_{x'}\otimes (K_U\otimes {\mathcal O}_U((j-1)x'))_{x'}\,
=\, E_{x'}\otimes {\mathcal O}_U((j-2)x')_{x'}\, .
\end{equation}

The decomposition in \eqref{d} produces a decomposition
\begin{equation}\label{s2}
E_{x'}\otimes {\mathcal O}_U((j-2)x')_{x'}\,=\,
\bigoplus_{i=0}^{r-1} L_{x'}(i) \otimes{\mathcal O}_U((j-2)x')_{x'}\, .
\end{equation}
The residue of the logarithmic connection $\widehat{\mathbb D}_1$ at $x'$
$$
{\rm Res}(\widehat{\mathbb D}_1,\, x')\, \in\, \text{End}(E_{x'}\otimes {\mathcal O}_U((j-2)x')_{x'})
$$
preserves the decomposition in \eqref{s2}. Moreover, ${\rm Res}(\widehat{\mathbb D}_1,\, x')$ acts
on the subspace
$$
L_{x'}(i) \otimes{\mathcal O}_U((j-2)x')_{x'}\, \subset\, E_{x'}\otimes {\mathcal O}_U((j-2)x')_{x'}
$$
in \eqref{s2} as multiplication by $i-j+2$. Indeed, the residue ${\rm Res}({\mathbb D},\, x')$ acts
on $L_{x'}(i)$ as multiplication by $i$ (see \eqref{es1}), and the residue, at $x'$, of the logarithmic
connection on ${\mathcal O}_U((j-2)x')$ given by the de Rham differential $d$ is $2-j$. Consequently,
${\rm Res}(\widehat{\mathbb D}_1,\, x')$ acts on $L_{x'}(i) \otimes{\mathcal O}_U((j-2)x')_{x'}$
as multiplication by $i-j+2$. This implies that
\begin{equation}\label{s3}
{\rm Res}(\widehat{\mathbb D}_1,\, x')(E_{x'}\otimes {\mathcal O}_U((j-2)x')_{x'})\, \subset\,
\bigoplus_{i\in\{0,\cdots, r-1\}\setminus\{j-2\}} L_{x'}(i) \otimes{\mathcal O}_U((j-2)x')_{x'}\, ,
\end{equation}
and $\text{kernel}({\rm Res}(\widehat{\mathbb D}_1,\, x'))\,=\, L_{x'}(j-2) \otimes{\mathcal O}_U((j-2)x')_{x'}$.

On the other hand, the evaluation $\widehat{\mathbb D}_1(t)(x')\, \in\,
E_{x'}\otimes {\mathcal O}_U((j-2)x')_{x'}$ in \eqref{s1} satisfies the identity
$$
\widehat{\mathbb D}_1(t)(x')\, =\, {\rm Res}(\widehat{\mathbb D}_1,\, x')\big((t)(x')\big)\, .
$$
Therefore, from \eqref{s3} it follows that
\begin{equation}\label{np}
\widehat{\mathbb D}_1(t)(x')\, \in \, \bigoplus_{i\in\{0,\cdots, r-1\}\setminus\{j-2\}}
L_{x'}(i) \otimes{\mathcal O}_U((j-2)x')_{x'}\, .
\end{equation}

Recall that $\beta_{j-2}(w)$ in \eqref{d3} is the component of ${\mathbb D}(\widetilde{w})(x')$ in
$$
L_{x'}(j-2)\otimes (K_U\otimes {\mathcal O}_U((j-1)x'))_{x'}\, =\,
L_{x'}(j-2)\otimes {\mathcal O}_U((j-2)x')_{x'}
$$
with respect to the decomposition in \eqref{d2}. Therefore, from
\eqref{np} it follows immediately that the equality in \eqref{s} holds. As noted before,
\eqref{s} completes the proof.
\end{proof}

The map $M_j({\mathbb D},\, x')$ in \eqref{mo} is defined by sending any element $w$ as in \eqref{w} to
$\beta_{j-2}(w)$ constructed in \eqref{d4} from $w$. Lemma \ref{lem3} ensures that it is well-defined.

\begin{remark}\label{rem4}
Let $D'$ be a logarithmic connection singular at a point $x$, such that residue
at $x$ is semisimple. If $\lambda$ is an
eigenvalue of the local monodromy of $D'$ around $x$, then $\lambda\,=\, \exp(2\pi\sqrt{-1}b)$,
where $b$ is an eigenvalue of the residue $\text{Res}(D',\, x)$ \cite{De}. For any point $x'\,\in\, S$, the
eigenvalues of the residue $\text{Res}({\mathbb D},\, x')$ of the connection ${\mathbb D}$ in \eqref{lc}
are integers. Therefore, we conclude that $1$ is the only eigenvalue of the local monodromy of ${\mathbb D}$
around the point $x'$. In other words, the local monodromy of ${\mathbb D}$
around $x'$ is a unipotent automorphism. This local monodromy is given by
$$
(M_2({\mathbb D},\, x'),\, M_3({\mathbb D},\, x'),\, \cdots,\, M_r({\mathbb D},\, x'))\, \in\,
((K^{\otimes 2}_X)_{x'})^{\oplus (r-1)}\, ,
$$
where the elements $M_j({\mathbb D},\, x')$ are constructed in \eqref{mo}. The elements $M_j({\mathbb D},\, x')$ will
be studied in the next section.
\end{remark}

\section{Characterizing the logarithmic connections}\label{sec6}

As in \eqref{lc}, let
\begin{equation}\label{ls}
{\mathbb D}\, :\, E\,:=\, J^{r-1}(Q)\otimes{\mathcal O}_X(-(r-1)S)\, \longrightarrow\,
J^{r-1}(Q)\otimes{\mathcal O}_X(-(r-2)S)\otimes K_X
\end{equation}
satisfying the following four conditions:
\begin{enumerate}
\item ${\mathbb D}(\widehat{H}_i)\, \subset\, \widehat{H}_{i+1}\otimes K_X\otimes{\mathcal 
O}_X(S)$, for all $1\, \leq\, i\, \leq\, r-1$, where $\{\widehat{H}_i\}_{i=0}^r$ is the 
filtration of $J^{r-1}(Q)\otimes {\mathcal O}_X(-(r-1)S)$ in \eqref{e23}. This implies that 
the second fundamental forms of the subbundles $\widehat{H}_i$ produce a section
\begin{equation}\label{x3}
{\rm SF}(\mathbb{D},\, i)\, \in\, H^0(X,\, {\mathcal O}_X(S))
\end{equation}
as in \eqref{x2} for every $1\, \leq\, i\, \leq\, r-1$.

\item For all $1\, \leq\, i\, \leq\, r-1$, the section
${\rm SF}(\mathbb{D},\, i)$ in \eqref{x3} coincides with
the section of ${\mathcal O}_X(S)$ given by the constant function $1$ on $X$.

\item For every $x'\, \in\, S$, the eigenvalues of ${\rm Res}({\mathbb D},\, x')$ are the 
integers
\begin{equation}\label{ele}
\{0,\, 1,\, \cdots,\, r-2,\, r-1\},
\end{equation}
with the multiplicity of each of them being one.

\item For all $0\, \leq\, i\, \leq\, r-1$ and every $x'\, \in\, S$, the eigenspace of ${\rm 
Res}({\mathbb D},\, x')$ for the eigenvalue $i$ is contained in the subspace 
$(\widehat{H}_{i+1})_{x'}\, \subset\, E_{x'}$ in \eqref{ej}.
\end{enumerate}

\begin{theorem}\label{thm1}
There is a branched ${\rm SL}(r,{\mathbb C})$--oper $$(V,\, {\mathcal F},\, D)$$ such that
the logarithmic connection ${\mathbb D}$ in \eqref{ls}
coincides with the logarithmic connection on $E$ associated to $(V,\, {\mathcal F},\, D)$ by
Proposition \ref{prop3} if and only if the following two conditions hold:
\begin{enumerate}
\item The logarithmic connection on $\det (J^{r-1}(Q)\otimes{\mathcal O}_X(-(r-1)S))$ induced by the
logarithmic connection $\mathbb D$ on $J^{r-1}(Q)\otimes {\mathcal
O}_X(-(r-1)S)$ coincides with the logarithmic connection on ${\mathcal O}_X\left(-\frac{r(r-1)}{2}S\right)$
given by the de Rham differential $d$, once $\det (J^{r-1}(Q)\otimes{\mathcal O}_X(-(r-1)S))$ is
identified with ${\mathcal O}_X\left(-\frac{r(r-1)}{2}S\right)$ using \eqref{e27}.

\item $M_j({\mathbb D},\, x')\,=\, 0$, for all $2\, \leq\, j\,\leq\, r$ and every $x'\, \in\, S$,
where $M_j({\mathbb D}, \,x')$ are constructed in \eqref{mo}.
\end{enumerate}
\end{theorem}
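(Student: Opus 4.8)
The plan is to prove the two implications separately. The forward (``only if'') implication will follow almost immediately from results already in hand, whereas the converse will require reconstructing the branched oper by an explicit Hecke transformation and then verifying Definition \ref{def1}. Assume first that $\mathbb D$ equals the connection $\mathcal D$ attached to some branched oper $(V,\,\mathcal F,\,D)$ by Proposition \ref{prop3}. Then condition (1) of the theorem is precisely Lemma \ref{lem4}. For condition (2), recall that $\mathcal D$ has trivial local monodromy around every point of $S$ (this was observed in the section on local monodromy, cf. Remark \ref{rem-ch2}): indeed $\mathcal D\big\vert_{X_0}$ coincides with the holomorphic connection $D\big\vert_{X_0}$ under the identification of Corollary \ref{cor3}, and $D$ is a genuine holomorphic connection on all of $V$. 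By Remark \ref{rem4}, the local monodromy of $\mathcal D$ around $x'$ is the unipotent automorphism recorded by $(M_2(\mathcal D,x'),\ldots,M_r(\mathcal D,x'))$, so its triviality is equivalent to $M_j(\mathcal D,x')=0$ for all $j$ and all $x'\in S$. This gives condition (2).

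For the converse, suppose $\mathbb D$ satisfies (1) and (2). First I would reconstruct the vector bundle. Fixing $x'\in S$, conditions (3) and (4) imposed on $\mathbb D$ say that $\mathrm{Res}(\mathbb D,x')$ is semisimple with the distinct integer eigenvalues $0,1,\ldots,r-1$, and that its eigenline $L_{x'}(i)$ for the eigenvalue $i$ satisfies $(\widehat H_{j+1})_{x'}=\bigoplus_{i\le j}L_{x'}(i)$, so that the flag $\{\widehat H_i\}$ is adapted to the residue. I would then define $V$ as the Hecke transformation of $E$ at the points of $S$ that lowers each residue eigenvalue to $0$, namely the sheaf obtained by enlarging $E$ along each eigenline $L_{x'}(i)$ by the twist $\mathcal O_X(i\,x')$ (the inverse of the construction in Lemma \ref{lem5}); over $X_0$ this leaves $E$ unchanged. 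Lemma \ref{lem6}, applied in the enlarging direction, tracks the residues through these successive transformations and shows that the connection induced by $\mathbb D$ on $V$ has vanishing residue at every $x'\in S$; the eigenspace condition (4) is what guarantees that the enlargement is compatible with the filtration and that the graded pieces transform correctly.

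The main obstacle is the next step: upgrading ``residue $0$'' to the assertion that the induced connection $D$ on $V$ is genuinely holomorphic, i.e. has no singularity along $S$. Because the eigenvalues $0,1,\ldots,r-1$ differ by positive integers, the situation is resonant, and vanishing of the residue alone does not force the twisted connection to be regular; the obstruction is exactly the unipotent part of the local monodromy, which by Remark \ref{rem4} is encoded by the elements $M_j(\mathbb D,x')$ of \eqref{mo}. Here condition (2) enters decisively: the hypothesis $M_j(\mathbb D,x')=0$ for all $j$ forces the local monodromy of $\mathbb D$ around each $x'$ to be trivial, and hence the Hecke-modified connection $D$ on $V$ extends across $S$ as a holomorphic connection. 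This is the crux of the argument, and where the quantities constructed in \eqref{mo} do their work.

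Finally I would check that $(V,\,\mathcal F,\,D)$, with $\mathcal F$ the filtration carried over from $\{\widehat H_i\}$, is a branched oper, i.e. satisfies the five conditions of Definition \ref{def1}. The determinant formula \eqref{e27}, together with the total twist $\sum_{i=0}^{r-1} i=\frac{r(r-1)}{2}$ at each of the $d$ points, gives $\bigwedge^r V=\det E\otimes\mathcal O_X(\frac{r(r-1)}{2}S)=\mathcal O_X$; condition (1) of the theorem, asserting that $\mathbb D$ induces the de Rham connection on $\det E$, then yields that $D$ induces the trivial connection on $\bigwedge^r V$, giving Definition \ref{def1}(3). The graded pieces $F_j/F_{j-1}$ are obtained from \eqref{eq} by applying the twist $\mathcal O_X((j-1)S)$ to $\widehat H_j/\widehat H_{j-1}=Q\otimes K_X^{\otimes(r-j)}\otimes\mathcal O_X(-(r-1)S)$; using $TX\otimes K_X=\mathcal O_X$ this simplifies to $F_1\otimes(TX\otimes\mathcal O_X(S))^{\otimes(j-1)}$, matching Definition \ref{def1}(1),(2). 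Conditions (4) and (5) of Definition \ref{def1}, namely $D(F_i)\subset F_{i+1}\otimes K_X$ and the second fundamental forms being the inclusion given by the section $\mathbf 1$, follow from conditions (1) and (2) imposed on $\mathbb D$, transported through the Hecke transformation; these can be verified over the dense open set $X_0$, where $(V,\mathcal F,D)$ and $(E,\{\widehat H_i\},\mathbb D)$ agree, exactly as in the proofs of Corollary \ref{cor4} and Lemma \ref{lem-sf}.
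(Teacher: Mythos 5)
Your overall architecture matches the paper's: Lemma \ref{lem4} for condition (1), Hecke modifications at the points of $S$ to pass from $E$ to a bundle that should carry a holomorphic connection, and verification of the axioms of Definition \ref{def1} over the dense open subset $X_0$. But there is a genuine gap at the crux, in both directions: you never prove that the vanishing of all $M_j({\mathbb D},\,x')$ is equivalent to the vanishing of the residue of the Hecke-modified connection (equivalently, to triviality of the local monodromy). You appeal to Remark \ref{rem4} for this, but that remark is an unproven, forward-looking assertion; in the paper its justification \emph{is} the content of Proposition \ref{prop4}, which identifies the residue ${\rm Res}({\mathbb D}_{r-1},\,x')$ of the modified connection --- a nilpotent endomorphism strictly decreasing the filtration $\{({\mathcal E}_i)_{x'}\}$ --- with the tuple $(M_2({\mathbb D},\,x'),\,\cdots,\,M_r({\mathbb D},\,x'))$ constructed in \eqref{mo}. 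That identification is the real work of the theorem, and your proposal substitutes a citation for it. The same substitution makes your forward direction circular: triviality of the monodromy of $\mathcal D$ yields $M_j\,=\,0$ only through this identification, whereas the paper argues instead that the modification inverts the construction of Proposition \ref{prop3}, so that ${\mathbb D}_{r-1}\,=\,D$ is holomorphic, and then Proposition \ref{prop4} gives $M_j\,=\,0$.

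Two of your supporting claims are also incorrect, which shows the gap is substantive rather than expository. First, the Hecke transformations do \emph{not} show that "the connection induced by $\mathbb D$ on $V$ has vanishing residue": Lemma \ref{lem6} controls only the eigenvalues (and semisimplicity can be lost when shifted eigenvalues collide), so after $r-1$ modifications one knows merely that the residue is nilpotent; whether it actually vanishes is precisely the question, and the answer is measured by the $M_j$'s. Second, your claim that "vanishing of the residue alone does not force the twisted connection to be regular" because of resonance is false: for a logarithmic connection on a vector bundle, zero residue at a point means the local connection form $A(z)\frac{dz}{z}$ has $A(0)\,=\,0$, hence no pole, i.e., the connection is holomorphic there; moreover, once the residue is nilpotent its eigenvalues no longer differ by positive integers, so resonance plays no role and triviality of the local monodromy is equivalent to vanishing of the residue (this is the paper's Corollary \ref{corp4}). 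The correct chain is: nilpotent residue $\Rightarrow$ (monodromy trivial $\Leftrightarrow$ residue $=0$ $\Leftrightarrow$ connection holomorphic), together with the equivalence (residue $=0$) $\Leftrightarrow$ ($M_j\,=\,0$ for all $j$) --- and that last equivalence, Proposition \ref{prop4}, is exactly what your proposal is missing.
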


\begin{proof}
If there is a branched $\text{SL}(r,{\mathbb C})$--oper $(V,\, {\mathcal F},\, D)$ such that
${\mathbb D}$ coincides with the logarithmic connection on $J^{r-1}(Q)\otimes{\mathcal O}_X(-(r-1)S)$
associated to $(V,\, {\mathcal F},\, D)$ by Proposition \ref{prop3}, then from Lemma \ref{lem4}
we know that the logarithmic connection on $\det (J^{r-1}(Q)\otimes{\mathcal O}_X(-(r-1)S))$ induced by
$\mathbb D$ on $J^{r-1}(Q)\otimes {\mathcal
O}_X(-(r-1)S)$ coincides with the logarithmic connection on ${\mathcal O}_X\left(-\frac{r(r-1)}{2}S\right)$
given by the de Rham differential $d$, once $\det (J^{r-1}(Q)\otimes{\mathcal O}_X(-(r-1)S))$ is
identified with ${\mathcal O}_X\left(-\frac{r(r-1)}{2}S\right)$ using \eqref{e27}.

Therefore, we assume that the logarithmic connection on $\det (J^{r-1}(Q)\otimes{\mathcal O}_X(-(r-1)S))$
induced by $\mathbb D$ on $J^{r-1}(Q)\otimes {\mathcal
O}_X(-(r-1)S)$ coincides with the logarithmic connection on ${\mathcal O}_X\left(-\frac{r(r-1)}{2}S\right)$
given by the de Rham differential $d$, after $\det (J^{r-1}(Q)\otimes{\mathcal O}_X(-(r-1)S))$ is
identified with ${\mathcal O}_X\left(-\frac{r(r-1)}{2}S\right)$ using \eqref{e27}.

To prove the theorem we
need to show the following: There is a branched $\text{SL}(r,{\mathbb C})$--oper $(V,\, {\mathcal F},\, D)$ such that
${\mathbb D}$ coincides with the logarithmic connection on $E\,=\, J^{r-1}(Q)\otimes{\mathcal O}_X(-(r-1)S)$
associated to $(V,\, {\mathcal F},\, D)$ by Proposition \ref{prop3} if and only if
$M_j({\mathbb D},\, x')\,=\, 0$ for all $2\, \leq\, j\,\leq\, r$.

In Proposition \ref{prop3} we constructed a logarithmic connection on $E$ from a
branched $\text{SL}(r,{\mathbb C})$--oper. The above statement will be proved by establishing an
inverse of this construction in Proposition \ref{prop3}.

Consider the holomorphic vector bundle
\begin{equation}\label{e28}
{\mathcal W}\, :=\, J^{r-1}(Q)\,=\, E\otimes {\mathcal O}_X((r-1)S)
\end{equation}
on $X$ (see \eqref{ej}). The logarithmic connection $\mathbb D$ on $E$ and the logarithmic connection on
${\mathcal O}_X((r-1)S)$ given by the de Rham differential $d$ together produce a logarithmic connection 
\begin{equation}\label{e28b}
\widetilde{\mathbb D}\, :\, {\mathcal W}\, \longrightarrow\, {\mathcal W}\otimes K_X\otimes {\mathcal O}_X(S)
\end{equation}
on the holomorphic vector bundle ${\mathcal W}$ in \eqref{e28}. 
At any point $x'\, \in\, S$, the residue
of the logarithmic connection on ${\mathcal O}_X((r-1)S)$ given by the de Rham differential $d$
is $1-r$. On the other hand, the eigenvalues of ${\rm Res}({\mathbb D},\, x')$ are
given to be $\{0,\, 1,\, \cdots,\, r-2,\, r-1\}$ (see \eqref{ele}). Therefore, the eigenvalues
of the residue $\text{Res}(\widetilde{\mathbb D},\, x')$ of $\widetilde{\mathbb D}$ at $x'$ are
$\{1-r,\, 2-r,\, \cdots ,\, -1,\, 0\}$. We note that the multiplicity of every eigenvalue
of $\text{Res}(\widetilde{\mathbb D},\, x')$ is one.

For each $x'\,\in\, S$, let
$$
\ell_{x'}(0)\, \subset\, {\mathcal W}_{x'}
$$
be the eigenspace, for the eigenvalue $0$, of $\text{Res}(\widetilde{\mathbb D},\, x')$; so
$\ell_{x'}(0)$ is a line in ${\mathcal W}_{x'}$. Let ${\mathcal W}_1$ be the holomorphic vector
bundle of rank $r$ on $X$ defined by the following short exact sequence of coherent analytic sheaves on $X$:
\begin{equation}\label{w1}
0\, \longrightarrow\, {\mathcal W}_1\, \longrightarrow\, {\mathcal W}\, \longrightarrow\,
\bigoplus_{x'\in S}{\mathcal W}_{x'}/\ell_{x'}(0) \, \longrightarrow\, 0\, .
\end{equation}
{}From Lemma \ref{lem5} we know that the logarithmic connection
${\mathbb D}\, :\, {\mathcal W}\, \longrightarrow\, {\mathcal W}\otimes K_X\otimes{\mathcal O}_X(S)$ preserves
the subsheaf ${\mathcal W}_1$ in \eqref{w1}. Let
$$
{\mathbb D}_1\, :\, {\mathcal W}_1\, \longrightarrow\, {\mathcal W}_1\otimes K_X\otimes{\mathcal O}_X(S)
$$
be the logarithmic connection on ${\mathcal W}_1$ induced by ${\mathbb D}$. Since the
eigenvalues of ${\rm Res}({\mathbb D},\, x')$ are $\{0,\, -1,\, \cdots,\, 2-r,\, 1-r\}$, and $\ell_{x'}(0)$
is the eigenspace of ${\rm Res}({\mathbb D},\, x')$ for the eigenvalue $0$, from Lemma \ref{lem6} it follows
that the eigenvalues of ${\rm Res}({\mathbb D}_1,\, x')$ are
$\{0,\, -1,\, \cdots,\, 2-r\}$. The eigenvalue $0$ of ${\rm Res}({\mathbb D}_1,\, x')$ has multiplicity two,
while the rest of the eigenvalues of ${\rm Res}({\mathbb D}_1,\, x')$ are of multiplicity one.

For each $x'\,\in\, S$, let
$$
H_1(x')\, \subset\, ({\mathcal W}_1)_{x'}
$$
be the eigenspace of ${\rm Res}({\mathbb D}_1,\, x')\, \in\, {\rm End}(({\mathcal 
W}_1)_{x'})$ for the eigenvalue $0$. As noted above, we have $\dim H_1(x')\,=\, 2$. 
Imitating \eqref{w1} we define ${\mathcal W}_2$. More precisely, let ${\mathcal W}_2$ be 
the holomorphic vector bundle of rank $r$ on $X$ defined by the following short exact 
sequence of coherent analytic sheaves on $X$:
\begin{equation}\label{e31}
0\, \longrightarrow\, {\mathcal W}_2\, \longrightarrow\, {\mathcal W}_1\, \longrightarrow\,
\bigoplus_{x'\in S}({\mathcal W}_1)_{x'}/H_1(x') \, \longrightarrow\, 0\, .
\end{equation}
{}From Lemma \ref{lem5} we know that the logarithmic connection
${\mathbb D}_1\, :\, {\mathcal W}_1\, \longrightarrow\, {\mathcal W}_1\otimes K_X\otimes{\mathcal O}_X(S)$ preserves
the subsheaf ${\mathcal W}_2$ \eqref{e31}. Let
$$
{\mathbb D}_2\, :\, {\mathcal W}_2\, \longrightarrow\, {\mathcal W}_2\otimes K_X\otimes{\mathcal O}_X(S)
$$
be the logarithmic connection on ${\mathcal W}_2$ induced by ${\mathbb D}_1$. Since the
eigenvalues of ${\rm Res}({\mathbb D}_1,\, x')$ are $\{0,\, -1,\, \cdots,\, 2-r\}$, from Lemma \ref{lem6} it follows
that the eigenvalues of ${\rm Res}({\mathbb D}_2,\, x')$ are
$\{0,\, -1,\, \cdots,\, 3-r\}$. The eigenvalue $0$ of ${\rm Res}({\mathbb D}_2,\, x')$ has multiplicity three.

For each $x'\,\in\, S$, let $H_2(x')\, \subset\, ({\mathcal W}_2)_{x'}$
be the eigenspace of ${\rm Res}({\mathbb D}_2,\, x')$ for the eigenvalue $0$. Define the holomorphic vector
bundle ${\mathcal W}_3$ by the short exact sequence of coherent analytic sheaves
$$
0\, \longrightarrow\, {\mathcal W}_3\, \longrightarrow\, {\mathcal W}_2\, \longrightarrow\,
\bigoplus_{x'\in S}({\mathcal W}_2)_{x'}/H_2(x') \, \longrightarrow\, 0\, .
$$
We now proceed inductively. To explain this, for $2\,\leq\, j\, \leq\, r-2$, suppose that we have 
constructed a holomorphic vector bundle ${\mathcal W}_j$, and a logarithmic connection
${\mathbb D}_j$ on it, such that the following conditions hold:
\begin{itemize}
\item For each $x'\,\in\, S$, the eigenvalues of ${\rm Res}({\mathbb D}_j,\, x')$ are
$\{0,\, -1,\, \cdots,\, j+1-r\}$.

\item The multiplicity of the eigenvalue zero of ${\rm Res}({\mathbb D}_j,\, x')$
is $j+1$.
\end{itemize}
Let
$$
H_j(x')\, \subset\, ({\mathcal W}_j)_{x'}
$$
be the eigenspace of ${\rm Res}({\mathbb D}_j,\, x')$ for the eigenvalue $0$. Then define
the holomorphic vector bundle ${\mathcal W}_{j+1}$ by the short exact sequence of coherent analytic sheaves
\begin{equation}\label{e29}
0\, \longrightarrow\, {\mathcal W}_{j+1}\, \longrightarrow\, {\mathcal W}_j\, \longrightarrow\,
\bigoplus_{x'\in S}({\mathcal W}_j)_{x'}/H_j(x') \, \longrightarrow\, 0\, .
\end{equation}
{}From Lemma \ref{lem5} we know that the logarithmic connection ${\mathbb D}_j$ on 
${\mathcal W}_j$ preserves the subsheaf ${\mathcal W}_{j+1}\,\subset\, {\mathcal W}_j$ in 
\eqref{e29}; the logarithmic connection on ${\mathcal W}_{j+1}$ induced by ${\mathbb D}_j$ 
is denoted by ${\mathbb D}_{j+1}$. Since the eigenvalues of ${\rm Res}({\mathbb D}_j,\, 
x')$ are $\{0,\, -1,\, \cdots,\, j+1-r\}$, from Lemma \ref{lem6} it follows that the 
eigenvalues of ${\rm Res}({\mathbb D}_{j+1},\, x')$ are $\{0,\, -1,\, \cdots,\, j+2-r\}$. 
The eigenvalue $0$ of ${\rm Res}({\mathbb D}_{j+1},\, x')$ has multiplicity $j+2$.

Proceeding inductively, we finally obtain the following:
\begin{enumerate}
\item a holomorphic vector bundle ${\mathcal W}_{r-1}$ on $X$ of rank $r$, and

\item a logarithmic connection ${\mathbb D}_{r-1}$ on ${\mathcal W}_{r-1}$ whose singular locus is contained in $S$,
and for each point $x'\, \in\, S$, the residue
\begin{equation}\label{rr}
{\rm Res}({\mathbb D}_{r-1},\, x')\,\in\, {\rm End}(({\mathcal W}_{r-1})_{x'})
\end{equation}
is nilpotent (meaning, zero is the only eigenvalue of it).
\end{enumerate}

The next step in the proof of the theorem is to prove the following proposition.

\begin{proposition}\label{prop4}
Take any point $x'\, \in\, S$. Then
$$
{\rm Res}({\mathbb D}_{r-1},\, x')\,=\, 0
$$
(see \eqref{rr}) if and only if $M_j({\mathbb D},\, x')\,=\, 0$, for all $2\, \leq\, j\,\leq\, r$,
where $M_j({\mathbb D},\, x')$ are constructed in \eqref{mo}.
\end{proposition}

\begin{proof}
Consider the holomorphic vector bundle $\mathcal W$ in \eqref{e28}. Let
\begin{equation}\label{vp}
\varphi\, :\, {\mathcal W}_{r-1}\, \longrightarrow\, {\mathcal W}
\end{equation}
be the following composition of homomorphisms
$$
{\mathcal W}_{r-1}\, \longrightarrow\, {\mathcal W}_{r-2} \, \longrightarrow\, \cdots \, \longrightarrow\,
{\mathcal W}_{2} \, \longrightarrow\, {\mathcal W}_{1} \, \longrightarrow\, {\mathcal W}\, ;
$$
see \eqref{e29}, \eqref{e31} and \eqref{w1} for the above homomorphisms. We note that
the homomorphism $\varphi$ in \eqref{vp} is an isomorphism over
the open subset $X_0$ in \eqref{e11}. We will now show that any
holomorphic subbundle ${\mathcal V}\, \subset\, {\mathcal W}$ produces a holomorphic subbundle of
${\mathcal W}_{r-1}$. To prove this, let
$$
{\mathcal V}'\, \subset\, {\mathcal W}_{r-1}
$$
be the coherent analytic subsheaf uniquely defined by the following condition:
A holomorphic section $\sigma\, \in\, H^0(U,\, {\mathcal W}_{r-1})$ over some analytic open subset
$U\, \subset\, X$ is a section of ${\mathcal V}'$ if and only if the restriction of $\varphi(\sigma)$ to the
complement $U\setminus (U\bigcap S)$ is a section of the subbundle ${\mathcal V}\, \subset\, {\mathcal W}$. It is
straight-forward to check that ${\mathcal V}'$ is a holomorphic subbundle of ${\mathcal W}_{r-1}$.

For $0\, \leq\, j\,\leq\, r$, consider the holomorphic subbundle
$$
H_j\, \subset\, J^{r-1}(Q) \,=\, {\mathcal W}
$$
(see \eqref{e10} and \eqref{e28}). Let
$$
{\mathcal E}_j\, \subset\, {\mathcal W}_{r-1}
$$
be the holomorphic subbundle corresponding to $H_j$. So we have the filtration of holomorphic subbundles
\begin{equation}\label{e32}
0\,=\, {\mathcal E}_0\, \subset\, {\mathcal E}_1 \, \subset\, {\mathcal E}_2
\, \subset\, \cdots \, \subset\, {\mathcal E}_{r-1} \, \subset\, {\mathcal E}_r \,=\, {\mathcal W}_{r-1}
\end{equation}
of ${\mathcal W}_{r-1}$. Note that we have
\begin{equation}\label{vp2}
\varphi({\mathcal E}_j)\, \subset\, H_j
\end{equation}
for all $0\, \leq\, j\, \leq\, r$, where $\varphi$ is the homomorphism in \eqref{vp}. Therefore,
$\varphi$ produces a homomorphism
\begin{equation}\label{vpj}
\varphi_j\, :\, {\mathcal E}_j/{\mathcal E}_{j-1} \, \longrightarrow\, H_j/H_{j-1}\,=\, Q\otimes K^{\otimes (r-j)}_X
\end{equation}
for all $1\, \leq\, j\, \leq\, r$; see \eqref{e10a} for the isomorphism in \eqref{vpj}.

Recall that the logarithmic connection $\mathbb D$ on $E\,=\, J^{r-1}(Q)\otimes{\mathcal O}_X(-(r-1)S)$
satisfies the following condition: For any $x'\, \in\, S$ and any $0\, \leq\, i\, \leq\, r-1$, the eigenspace of
${\rm Res}({\mathbb D},\, x')$ for the
eigenvalue $i$ is contained in the subspace $(\widehat{H}_{i+1})_{x'}\, \subset\, E_{x'}$ (see \eqref{e23} and
\eqref{ej}). This condition implies that the logarithmic connection $\widetilde{\mathbb D}$ on ${\mathcal W}$
in \eqref{e28b} has the following property:
For any $x'\, \in\, S$ and any $0\, \leq\, i\, \leq\, r-1$, the eigenspace of
${\rm Res}(\widetilde{\mathbb D},\, x')$ for the eigenvalue $i-r+1$ is contained in the subspace $(H_{i+1})_{x'}\,
\subset\, {\mathcal W}_{x'}$ in \eqref{e10}. Using this and \eqref{vp2} it follows that
${\rm Res}({\mathbb D}_{r-1},\, x')$ in \eqref{rr} preserves the filtration of subspaces
\begin{equation}\label{e35}
0\,=\, ({\mathcal E}_0)_{x'}\, \subset\, ({\mathcal E}_1)_{x'} \, \subset\, ({\mathcal E}_2)_{x'}
\, \subset\, \cdots \, \subset\, ({\mathcal E}_{r-1})_{x'} \, \subset\,
({\mathcal E}_r)_{x'} \,=\,({\mathcal W}_{r-1})_{x'}
\end{equation}
obtained from \eqref{e32}. Since ${\rm Res}({\mathbb D}_{r-1},\, x')$ is a nilpotent endomorphism, and
it preserves the filtration in \eqref{e35}, we conclude that
$$
{\rm Res}({\mathbb D}_{r-1},\, x')(({\mathcal E}_i)_{x'})\, \subset\, ({\mathcal E}_{i-1})_{x'}
$$
for all $1\,\leq\, i\, \leq\, r$. It also follows from the construction of ${\mathcal W}_{r-1}$ that
$$
{\rm Res}({\mathbb D}_{r-1},\, x')(({\mathcal E}_i)_{x'})\bigcap ({\mathcal E}_{i-2})_{x'}\,=\, 0
$$
for all $2\,\leq\, i\, \leq\, r$.

The above observations on ${\rm Res}({\mathbb D}_{r-1},\, x')$ combine together to give the following:

For every $x\, \in\, S$, the residue ${\rm Res}({\mathbb D}_{r-1},\, x')$ gives an element
\begin{equation}\label{e36}
\mathbb{R}({\mathbb D}_{r-1},\, x')\,\in\, \bigoplus_{i=1}^{r-1} 
{\rm Hom}({\mathcal E}_{i+1}/{\mathcal E}_i,\, {\mathcal E}_i/{\mathcal E}_{i-1})_{x'}\, .
\end{equation}
This $\mathbb{R}({\mathbb D}_{r-1},\, x')$ has the property that ${\rm Res}({\mathbb D}_{r-1},\, x')\,=\, 0$
if and only if $\mathbb{R}({\mathbb D}_{r-1},\, x')\,=\, 0$.

{}From \eqref{vpj} and the construction of ${\mathcal W}_{r-1}$ it follows that
$$
{\mathcal E}_i/{\mathcal E}_{i-1}\,=\, (H_i/H_{i-1})\otimes {\mathcal O}_X(-(r-i)S)
\,=\, Q\otimes K^{\otimes (r-i)}_X\otimes {\mathcal O}_X(-(r-i)S)
$$
for all $1\, \leq\, i\, \leq\, r$. From this we conclude that
\begin{equation}\label{e39}
{\mathcal E}_{i+1}/{\mathcal E}_i\,=\, ({\mathcal E}_i/{\mathcal E}_{i-1})\otimes TX\otimes {\mathcal O}_X(S)\, .
\end{equation}
The isomorphism in \eqref{e39} implies that for any $x'\, \in\, S$ and all $1\,\leq\, i\leq\, k-1$, we have 
$$
{\rm Hom}({\mathcal E}_{i+1}/{\mathcal E}_i,\, {\mathcal E}_i/{\mathcal E}_{i-1})_{x'}
\,=\, (K_X\otimes {\mathcal O}_X(-S))_{x'}\,=\, (K^{\otimes 2}_X)_{x'}
$$
(see \eqref{pa} for the last isomorphism). Therefore, $\mathbb{R}({\mathbb D}_{r-1}, \,x')$ in \eqref{e36}
can be considered as an element
$$
\mathbb{R}({\mathbb D}_{r-1},\, x')\,\in\, \bigoplus_{i=1}^{r-1} (K^{\otimes 2}_X)_{x'}\,=\,
((K^{\otimes 2}_X)_{x'})^{\oplus (r-1)}\, .
$$
For any $1\,\leq\, i\, \leq\, r-1$, the element of $(K^{\otimes 2}_X)_{x'}$ in the $i$--th component of
$\mathbb{R}({\mathbb D}_{r-1}, \,x')$, with respect to the above decomposition, coincides with 
$M_{i+1}({\mathbb D},\, x')$ constructed in \eqref{mo}. It was noted earlier that
${\rm Res}({\mathbb D}_{r-1},\, x')\,=\, 0$ if and only if $\mathbb{R}({\mathbb D}_{r-1},\, x')\,=\, 0$.
Therefore, the proof of the proposition is complete.
\end{proof}

Continuing with the proof of Theorem \ref{thm1}, first assume that
there is a branched $\text{SL}(r,{\mathbb C})$--oper $(V,\, {\mathcal F},\, D)$ such that
${\mathbb D}$ coincides with the logarithmic connection on $E\,=\, J^{r-1}(Q)\otimes{\mathcal O}_X(-(r-1)S)$
associated to $(V,\, {\mathcal F},\, D)$ by Proposition \ref{prop3}.

It is straight-forward to check that the construction of the triple $({\mathcal W}_{r-1},\,
\{{\mathcal E}_i\}_{i=0}^r,\, {\mathbb D}_{r-1})$ from $(J^{r-1}(Q)\otimes{\mathcal O}_X(-(r-1)S),\,
{\mathbb D})$ is the inverse of
the construction of $(J^{r-1}(Q)\otimes{\mathcal O}_X(-(r-1)S),\, {\mathbb D})$ from
$(V,\, {\mathcal F},\, D)$. More precisely, $({\mathcal W}_{r-1},\,
\{{\mathcal E}_i\}_{i=0}^r,\, {\mathbb D}_{r-1})$ coincides with $(V,\, {\mathcal F},\, D)$.
In particular, ${\mathbb D}_{r-1}$ is a holomorphic connection on ${\mathcal W}_{r-1}$, as
$D$ is a holomorphic connection on $V$. Now from Proposition \ref{prop4}
we conclude that $M_j({\mathbb D},\, x')\,=\, 0$ for all $2\, \leq\, j\,\leq\, r$ and every $x'\,\in\, S$.

To prove the converse, assume that
\begin{equation}\label{e37}
M_j({\mathbb D},\, x')\,=\, 0
\end{equation}
for all $2\, \leq\, j\,\leq\, r$ and every $x'\, \in\, S$,
where $M_j({\mathbb D},\, x')$ are constructed in \eqref{mo}. We will show that
there is a branched $\text{SL}(r,{\mathbb C})$--oper $(V,\, {\mathcal F},\, D)$ such that
${\mathbb D}$ coincides with the logarithmic connection on $E\,=\, J^{r-1}(Q)\otimes{\mathcal O}_X(-(r-1)S)$
associated to $(V,\, {\mathcal F},\, D)$ by Proposition \ref{prop3}.

Since \eqref{e37} holds, from Proposition \ref{prop4} we know that 
the logarithmic connection ${\mathbb D}_{r-1}$ on ${\mathcal W}_{r-1}$ is actually
a holomorphic connection. Consider the filtration $\{{\mathcal E}_i\}_{i=0}^r$ of ${\mathcal W}_{r-1}$
in \eqref{e32}. It can be shown that
\begin{equation}\label{e38}
{\mathbb D}_{r-1}({\mathcal E}_i) \, \subset\, {\mathcal E}_{i+1}\otimes K_X
\end{equation}
for all $0\, \leq\, i\, \leq\, r-1$. Indeed, we have
\begin{equation}\label{e41}
({\mathcal W}_{r-1},\, \{{\mathcal E}_i\}_{i=0}^r,\, {\mathbb D}_{r-1})\big\vert_{X_0}
\,=\, (J^{r-1}(Q)\otimes{\mathcal O}_X(-(r-1)S),\, \{\widehat{H}_i\}_{i=0}^r,\, {\mathbb D})
\end{equation}
over the nonempty open subset $X_0$ in \eqref{e11}; the filtration
$\{\widehat{H}_i\}_{i=0}^r$ is constructed in \eqref{e23}. So \eqref{e38} follows from the given condition that
$${\mathbb D}(\widehat{H}_i)\, \subset\, \widehat{H}_{i+1}\otimes K_X\otimes{\mathcal O}_X(S)$$
for all $0\, \leq\, i\, \leq\, r-1$.

In view of \eqref{e38}, the second fundamental form of ${\mathcal E}_i\, \subset\, {\mathcal W}_{r-1}$
for the holomorphic connection ${\mathbb D}_{r-1}$ produces a homomorphism
$$
\Psi_i\,\in\, H^0(X,\, \text{Hom}({\mathcal E}_i/{\mathcal E}_{i-1},\, {\mathcal E}_{i+1}/{\mathcal E}_i)
\otimes K_X)
$$
for every $1\, \leq\, i\, \leq\, r-1$. Now using the isomorphism in \eqref{e39} we conclude that
\begin{equation}\label{e40}
\Psi_i\,\in\, H^0(X,\, {\mathcal O}_X(S))\, .
\end{equation}

Recall the given condition that for all $1\, \leq\, i\, \leq\, r-1$, the section
${\rm SF}(\mathbb{D},\, i)$ in \eqref{x3} is given
by the constant function $1$ on $X$. Therefore, from the isomorphism in \eqref{e41} we 
conclude that the section $\Psi_i$ in \eqref{e40} coincides with the section of
${\mathcal O}_X(S)$ given by the constant function $1$ on $X$. From this
it follows that $$({\mathcal W}_{r-1},\, 
\{{\mathcal E}_i\}_{i=0}^r,\, {\mathbb D}_{r-1})$$ is a branched $\text{SL}(r,{\mathbb 
C})$--oper. The logarithmic connection on $J^{r-1}(Q)\otimes{\mathcal O}_X(-(r-1)S)$ that 
corresponds to the branched $\text{SL}(r,{\mathbb C})$--oper $({\mathcal W}_{r-1},\, 
\{{\mathcal E}_i\}_{i=0}^r,\, {\mathbb D}_{r-1})$ by Proposition \ref{prop3} coincides with 
$\mathbb D$, because the construction $({\mathcal W}_{r-1},\, \{{\mathcal E}_i\}_{i=0}^r,\, 
{\mathbb D}_{r-1})$ from $\mathbb D$ is the inverse of the construction in Proposition 
\ref{prop3}. This completes the proof of Theorem \ref{thm1}.
\end{proof}

Let
\begin{equation}\label{ls2}
{\mathbb D}\, :\, E\,:=\, J^{r-1}(Q)\otimes{\mathcal O}_X(-(r-1)S)\, \longrightarrow\,
J^{r-1}(Q)\otimes{\mathcal O}_X(-(r-2)S)\otimes K_X
\end{equation}
be a logarithmic connection satisfying the following
five conditions:
\begin{enumerate}
\item The logarithmic connection on $\det (J^{r-1}(Q)\otimes{\mathcal O}_X(-(r-1)S))$ induced by the
logarithmic connection $\mathbb D$ on $J^{r-1}(Q)\otimes {\mathcal
O}_X(-(r-1)S)$ coincides with the logarithmic connection on ${\mathcal O}_X\left(-\frac{r(r-1)}{2}S\right)$
given by the de Rham differential $d$, once $\det (J^{r-1}(Q)\otimes{\mathcal O}_X(-(r-1)S))$ is
identified with ${\mathcal O}_X\left(-\frac{r(r-1)}{2}S\right)$ using \eqref{e27}.

\item ${\mathbb D}(\widehat{H}_i)\, \subset\, \widehat{H}_{i+1}\otimes K_X\otimes{\mathcal 
O}_X(S)$, for all $1\, \leq\, i\, \leq\, r-1$, where $\{\widehat{H}_i\}_{i=0}^r$ is the 
filtration of $J^{r-1}(Q)\otimes {\mathcal O}_X(-(r-1)S)$ in \eqref{e23}. This implies that 
the second fundamental forms of the subbundles $\widehat{H}_i$ produce a section
\begin{equation}\label{x3n}
{\rm SF}(\mathbb{D},\, i)\, \in\, H^0(X,\, {\mathcal O}_X(S))
\end{equation}
as in \eqref{x2} for every $1\, \leq\, i\, \leq\, r-1$.

\item For all $1\, \leq\, i\, \leq\, r-1$, the section
${\rm SF}(\mathbb{D},\, i)$ in \eqref{x3n} coincides with
the section of ${\mathcal O}_X(S)$ given by the constant function $1$ on $X$.

\item For every $x'\, \in\, S$, the eigenvalues of ${\rm Res}({\mathbb D},\, x')$ are the 
integers $\{0,\, 1,\, \cdots,\, r-2,\, r-1\}$, with the multiplicity of each of them being 
one.

\item For all $0\, \leq\, i\, \leq\, r-1$ and every $x'\, \in\, S$, the eigenspace of ${\rm 
Res}({\mathbb D},\, x')$ for the eigenvalue $i$ is contained in the subspace 
$(\widehat{H}_{i+1})_{x'}\, \subset\, E_{x'}$ in \eqref{ej}.
\end{enumerate}

In the proof of Theorem \ref{thm1} we constructed the following:
\begin{enumerate}
\item a holomorphic vector bundle ${\mathcal W}_{r-1}$ on $X$ of rank $r$, and

\item a logarithmic connection ${\mathbb D}_{r-1}$ on ${\mathcal W}_{r-1}$ whose singular locus is contained in $S$,
and for each point $x'\, \in\, S$, the residue
$$
{\rm Res}({\mathbb D}_{r-1},\, x')\,\in\, {\rm End}(({\mathcal W}_{r-1})_{x'})
$$
is nilpotent (see \eqref{rr}).
\end{enumerate}

If the residue ${\rm Res}(\nabla,\, y)$ of a logarithmic connection $\nabla$ at a point $y$ is nilpotent, then the local monodromy
of $\nabla$ around $y$ is trivial if and only if ${\rm Res}(\nabla,\, y)\,=\, 0$. Therefore, Proposition \ref{prop4} has the
following immediate corollary:

\begin{corollary}\label{corp4}
The connection ${\mathbb D}_{r-1}$ on ${\mathcal W}_{r-1}$ is nonsingular (meaning it is a holomorphic connection)
if and only if the local monodromy of ${\mathbb D}_{r-1}$ around each point of $S$ it trivial.
\end{corollary}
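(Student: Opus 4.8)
The plan is to reduce the statement to two elementary facts about logarithmic connections and then to invoke the nilpotency of the residues of ${\mathbb D}_{r-1}$, which was already secured during the inductive construction preceding Proposition \ref{prop4}. The overall scheme is to show that three conditions are equivalent: (a) ${\mathbb D}_{r-1}$ is nonsingular, (b) ${\rm Res}({\mathbb D}_{r-1},\, x')\,=\, 0$ for every $x'\, \in\, S$, and (c) the local monodromy of ${\mathbb D}_{r-1}$ around each point of $S$ is trivial.

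First I would establish the equivalence of (a) and (b). A logarithmic connection is holomorphic at a singular point $x'$ precisely when its residue there vanishes: in a local holomorphic frame near $x'$, with coordinate $z$ vanishing at $x'$, the connection has the form $d + M(z)\frac{dz}{z}$ with $M$ holomorphic and ${\rm Res}({\mathbb D}_{r-1},\, x')\,=\, M(0)$, so that $M(0)\,=\,0$ is equivalent to $M(z)\,=\, z\, M_1(z)$ with $M_1$ holomorphic, i.e.\ to the connection form $M_1(z)\, dz$ being pole-free. Since the singular locus of ${\mathbb D}_{r-1}$ is contained in $S$, this yields that ${\mathbb D}_{r-1}$ is holomorphic on all of $X$ if and only if all its residues over $S$ vanish. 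Next I would invoke the fact recorded immediately before the statement---when the residue of a logarithmic connection at $y$ is nilpotent, the local monodromy around $y$ is trivial if and only if the residue is zero---together with the fact that ${\rm Res}({\mathbb D}_{r-1},\, x')$ is nilpotent for every $x'\, \in\, S$, as established in the construction of $({\mathcal W}_{r-1},\, {\mathbb D}_{r-1})$. Combining these, conditions (b) and (c) are equivalent as well, and the corollary follows.

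The only genuinely delicate point---and the reason nilpotency cannot be dispensed with---is the implication ``trivial local monodromy $\Rightarrow$ residue zero.'' For a general residue this fails: as noted in Remark \ref{rem4}, the integer eigenvalues of ${\rm Res}({\mathbb D},\, x')$ already force the local monodromy of ${\mathbb D}$ on $E$ to be unipotent, yet its residues are far from zero. The substance of the cited equivalence is that, because ${\rm Res}({\mathbb D}_{r-1},\, x')$ is nilpotent, the local monodromy is conjugate to $\exp(-2\pi\sqrt{-1}\,{\rm Res}({\mathbb D}_{r-1},\, x'))$, and for a nilpotent matrix $R$ one has $\exp(-2\pi\sqrt{-1}\,R) - {\rm Id}\,=\, -2\pi\sqrt{-1}\,R\cdot U$ with $U$ invertible, so the monodromy is the identity exactly when $R\,=\,0$. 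I expect this to be the crux of the argument, while the equivalence of (a) and (b) is routine and the reverse implication in (c), namely that a holomorphic (hence flat) connection has trivial local monodromy, is immediate.
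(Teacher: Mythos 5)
Your proof is correct and takes essentially the same route as the paper: the paper likewise deduces the corollary by combining the nilpotency of ${\rm Res}({\mathbb D}_{r-1},\, x')$ at each $x'\,\in\, S$ with the fact that a logarithmic connection whose residue at a point is nilpotent has trivial local monodromy there exactly when that residue vanishes, together with the standard equivalence between nonsingularity and vanishing of all residues. The only difference is that you spell out proofs of these two facts (the local form $d+M(z)\frac{dz}{z}$, and the invertibility of $U$ in $\exp(-2\pi\sqrt{-1}R)-{\rm Id}\,=\,-2\pi\sqrt{-1}\,R\cdot U$ for nilpotent $R$), which the paper cites as standard.
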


In view of Corollary \ref{corp4}, from the last part of the proof of Theorem \ref{thm1} (the part after Proposition \ref{prop4})
we have the following:

\begin{proposition}\label{prop5}
There is a branched ${\rm SL}(r,{\mathbb C})$--oper $(V,\, {\mathcal F},\, D)$ such that
the logarithmic connection ${\mathbb D}$ in \eqref{ls2}
coincides with the logarithmic connection on $E$ associated to $(V,\, {\mathcal F},\, D)$ by
Proposition \ref{prop3} if and only if the local monodromy of $\mathbb D$ around each point $S$ is trivial.
\end{proposition}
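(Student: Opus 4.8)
The plan is to assemble the statement directly from the machinery already in place, observing that the five conditions imposed on ${\mathbb D}$ in \eqref{ls2} consist of the determinant condition together with precisely the four standing hypotheses on ${\mathbb D}$ required by Theorem \ref{thm1}. In particular, condition (1) of \eqref{ls2} is exactly the determinant condition appearing as the first of the two conditions in the characterization of Theorem \ref{thm1}. Thus Theorem \ref{thm1} applies, and since its determinant condition is already satisfied, it tells us that a branched ${\rm SL}(r,{\mathbb C})$--oper $(V,\, {\mathcal F},\, D)$ with the required property exists if and only if $M_j({\mathbb D},\, x')\,=\, 0$ for all $2\, \leq\, j\, \leq\, r$ and every $x'\, \in\, S$. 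It therefore suffices to show that this vanishing of all the $M_j({\mathbb D},\, x')$ is equivalent to the triviality of the local monodromy of ${\mathbb D}$ around each point of $S$.

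First I would invoke Proposition \ref{prop4}: for a fixed $x'\, \in\, S$, the vanishing $M_j({\mathbb D},\, x')\,=\, 0$ for all $2\, \leq\, j\, \leq\, r$ holds if and only if ${\rm Res}({\mathbb D}_{r-1},\, x')\,=\, 0$, where $({\mathcal W}_{r-1},\, \{{\mathcal E}_i\}_{i=0}^r,\, {\mathbb D}_{r-1})$ is the triple constructed in the proof of Theorem \ref{thm1}. Letting $x'$ range over all of $S$, and recalling from \eqref{rr} that each residue ${\rm Res}({\mathbb D}_{r-1},\, x')$ is nilpotent, the simultaneous vanishing of all these residues is exactly the assertion that ${\mathbb D}_{r-1}$ is a holomorphic (nonsingular) connection on ${\mathcal W}_{r-1}$. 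By Corollary \ref{corp4} this in turn holds if and only if the local monodromy of ${\mathbb D}_{r-1}$ around each point of $S$ is trivial. Chaining these equivalences reduces the proposition to the identification of the local monodromy of ${\mathbb D}_{r-1}$ with that of ${\mathbb D}$.

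This last identification is where the one genuinely new observation enters, and it is the step I expect to require the most care to state correctly. By \eqref{e41} the triple $({\mathcal W}_{r-1},\, \{{\mathcal E}_i\}_{i=0}^r,\, {\mathbb D}_{r-1})$ restricts over the open set $X_0\,=\, X\setminus S$ of \eqref{e11} to $(J^{r-1}(Q)\otimes {\mathcal O}_X(-(r-1)S),\, \{\widehat{H}_i\}_{i=0}^r,\, {\mathbb D})$; indeed the Hecke modifications carried out in the proof of Theorem \ref{thm1} are isomorphisms away from $S$, so ${\mathbb D}_{r-1}$ and ${\mathbb D}$ become the same connection after restriction to $X_0$. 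Since the local monodromy of a logarithmic connection around a point $x'\, \in\, S$ is computed from its restriction to a small punctured neighborhood of $x'$, which lies entirely in $X_0$, the local monodromies of ${\mathbb D}$ and ${\mathbb D}_{r-1}$ around each $x'\, \in\, S$ coincide. Consequently the local monodromy of ${\mathbb D}_{r-1}$ around every point of $S$ is trivial precisely when the local monodromy of ${\mathbb D}$ around every point of $S$ is trivial, and combining this with the equivalences of the previous paragraph completes the proof.
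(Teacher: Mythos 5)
Your proof is correct and follows essentially the same route as the paper: reduce via Theorem \ref{thm1} (with the determinant condition already assumed) to the vanishing of the $M_j({\mathbb D},\, x')$, pass through Proposition \ref{prop4} and Corollary \ref{corp4} to the triviality of the local monodromy of ${\mathbb D}_{r-1}$, and identify this with the local monodromy of ${\mathbb D}$ using the isomorphism over $X_0$ in \eqref{e41}. The only difference is that you spell out explicitly the final identification of local monodromies (which the paper leaves implicit when it passes from ${\mathbb D}_{r-1}$ in Corollary \ref{corp4} to ${\mathbb D}$ in the statement of the proposition), and this is a worthwhile clarification rather than a deviation.
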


\subsection{${\rm SL}(r, {\mathbb C})$-opers with regular
singularity and branched ${\rm SL}(r, {\mathbb C})$-opers}\label{se7}

This subsection was communicated to us by Edward Frenkel.

In this subsection we relate the branched ${\rm SL}(r, {\mathbb C})$-opers
with the ${\rm SL}(r, {\mathbb C})$-opers with regular singularity and
trivial monodromy introduced by Frenkel and Gaitsgory in \cite[Section 2.9]{FG}.

Let $G$ be a connected simple affine algebraic group over $\mathbb C$, and let $X$ be a 
smooth projective algebraic curve over ${\mathbb C}$. As before, $S$ is a finite subset of 
$X$. Beilinson and Drinfeld have introduced in \cite[Section 4]{BD1} and \cite[Section 
3.8]{BD2} the notion of a $G$--oper on $X$ with {\em regular singularity} at $S$. We recall 
that it is an ordinary $G$--oper on the complement of $S$ in $X$, whose restriction to the 
formal punctured disc $D^\times_{x_k}$ at each point $x_k \,\in\, S$ is a $G$--oper on the 
punctured disc with regular singularity; the details are in \cite[Section 3.8.8]{BD2} (see 
also \cite[Section 2.4]{FG}). Further, Beilinson and Drinfeld have defined the {\em 
residue} of such an oper. This residue is a point of the geometric invariant theoretic 
quotient ${\mathfrak g}/G \simeq {\mathfrak h}/W$, where ${\mathfrak g}$ is the Lie algebra 
of $G$, ${\mathfrak h}\, \subset\, {\mathfrak g}$ is its Cartan subalgebra, and $W$ is the 
corresponding Weyl group $N_G(T)/T$.

Denote by $\varpi$ the natural projection ${\mathfrak h}\,\longrightarrow\, {\mathfrak 
h}/W$. For a dominant integral coweight $\check\lambda \,\in\, {\mathfrak h}$, denote by 
$${\rm Op}^{{\rm RS},\varpi(-\check\lambda-\check\rho)}_{\mathfrak g}$$ the space of 
$G$--opers with regular singularity on a punctured disc with fixed residue 
$\varpi(-\check\lambda-\check\rho)$.

Frenkel and Gaitsgory have defined in \cite[Section 2.9]{FG}, following Beilinson and 
Drinfeld (unpublished), a subspace \begin{equation}\label{eef} {\rm Op}^{\check\lambda,{\rm 
reg}}_{\mathfrak g} \,\subset\, {\rm Op}^{{\rm 
RS},\varpi(-\check\lambda-\check\rho)}_{\mathfrak g} \end{equation} consisting of all those 
$G$--opers whose monodromy around the origin is trivial. So, in particular, ${\rm 
Op}^{0,{\rm reg}}_{\mathfrak g}$ is naturally identified with the space of regular 
$G$--opers on the disc. It may be mentioned that opers of this kind play an important 
r\^ole in the geometric Langlands correspondence (see \cite{FG2} and \cite[Section 
9.6]{Fr2}).

The following proposition is due to E. Frenkel.

\begin{proposition}\label{propef}
There is a natural isomorphism between the space of all branched 
${\rm SL}(r,{\mathbb C})$--opers on $X$ with branching over $S$ and the space of
all ${\rm SL}(r,{\mathbb C})$--opers on $X$ with 
regular singularity at $S$ satisfying the following two conditions:
\begin{enumerate}
\item the restriction to the punctured formal disc around each 
point $x_k \,\in\, S$ has residue $\varpi(-2\check\rho)$, and

\item this oper belongs to the subspace 
${\rm Op}^{\check\rho,{\rm reg}}_{\mathfrak g}$ in \eqref{eef}.
\end{enumerate}
\end{proposition}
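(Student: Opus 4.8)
The plan is to establish a dictionary between the local picture of a branched $\mathrm{SL}(r,\mathbb{C})$--oper near a branch point and the local picture of an $\mathrm{SL}(r,\mathbb{C})$--oper with regular singularity, and then to verify that under this dictionary the two conditions in the statement pin down exactly the residue and the monodromy data. The key observation is already available in the preceding material: by Proposition~\ref{prop3} (together with Lemma~\ref{lem2}) a branched oper $(V,\,\mathcal{F},\,D)$ gives rise to a logarithmic connection $\mathcal{D}$ on $E = J^{r-1}(Q)\otimes\mathcal{O}_X(-(r-1)S)$ whose residue at each $x'\in S$ has eigenvalues $\{0,1,\dots,r-1\}$, each with multiplicity one, and whose local monodromy is trivial (as noted at the start of Section~5, because $D$ itself is a \emph{holomorphic} connection on $V$). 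First I would recall that for $\mathrm{SL}(r,\mathbb{C})$ one has $\mathfrak{g}=\mathfrak{sl}_r$, and $\check\rho$ is the half-sum of positive coroots, so that $-2\check\rho$ has, in the standard $r$--dimensional representation, the diagonal entries $\{-(r-1),\,-(r-3),\,\dots,\,r-3,\,r-1\}$, i.e.\ the weights shifted symmetrically about $0$; after the normalizing twist by $\mathcal{O}_X((r-1)S)$ implicit in passing between $E$ and $V$ these become precisely $\{0,1,\dots,r-1\}$. Thus the residue eigenvalue data of $\mathcal{D}$ is exactly the image $\varpi(-2\check\rho)$ in $\mathfrak{h}/W$, which is condition~(1).

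Next I would translate the definition of an oper with regular singularity of~\cite[Section~3.8.8]{BD2} into the concrete rank--$r$ vector-bundle language used throughout this paper. An $\mathrm{SL}(r,\mathbb{C})$--oper with regular singularity at $S$ is, away from $S$, an ordinary oper, hence on $X_0 = X\setminus S$ it is the same datum as the restriction of our triple (by Proposition~\ref{prop1}, over $X_0$ we have $V = J^{r-1}(Q)$ and the oper structure is the standard jet-bundle one). The content of ``regular singularity'' is that near each $x_k$ the connection, written in an oper gauge, has a first-order pole with the Borel-reduction and the off-diagonal $1$'s in the correct positions; this is exactly the local normal form \eqref{e25} of $D$ in a splitting of the filtration, where the subdiagonal entries $\gamma_i$ from \eqref{gi} vanish at $x'$ and the second fundamental forms are the natural inclusions. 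So I would exhibit a map in each direction: from a branched oper, produce the logarithmic connection $\mathcal{D}$ on $E$ (Proposition~\ref{prop3}) and observe it satisfies the local regular-singularity normal form together with the residue constraint just computed; conversely, from an oper with regular singularity and residue $\varpi(-2\check\rho)$, apply the iterated Hecke-modification construction from the proof of Theorem~\ref{thm1} (the sequence $\mathcal{W}\supset\mathcal{W}_1\supset\cdots\supset\mathcal{W}_{r-1}$) to recover a rank-$r$ bundle carrying the candidate holomorphic connection.

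The crux is condition~(2), namely that membership in $\mathrm{Op}^{\check\rho,\,\mathrm{reg}}_{\mathfrak{g}}$ — the vanishing of the monodromy around each point of $S$ — corresponds under this dictionary precisely to $(V,\,\mathcal{F},\,D)$ being an \emph{honest} branched oper, i.e.\ to $D$ being a genuine holomorphic connection on $V$. This is where I would invoke Corollary~\ref{corp4} together with Proposition~\ref{prop4} and Proposition~\ref{prop5}: since the residue of $\mathbb{D}_{r-1}$ at each $x'$ is nilpotent, triviality of the local monodromy is equivalent to the residue being zero, which in turn is equivalent to the holomorphicity of the recovered connection, and hence to the vanishing of the obstruction classes $M_j(\mathbb{D},\,x')$. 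Thus the subspace cut out by trivial monodromy maps bijectively onto the space of branched opers. I would then assemble these local correspondences into a global natural isomorphism of the two moduli spaces, checking naturality by noting that both constructions are canonical (no choices beyond the fixed line bundle $\mathcal{L}$, whose ambiguity was already shown harmless in Remark~\ref{rem-ch}). The main obstacle I anticipate is purely expository rather than mathematical: carefully matching the Beilinson--Drinfeld normalization of the residue in $\mathfrak{h}/W$ — in particular the $\check\rho$-shift $\varpi(-\check\lambda-\check\rho)$ with $\check\lambda=\check\rho$ — against the integer eigenvalues $\{0,1,\dots,r-1\}$ produced here, so that condition~(1) reads off correctly; once that bookkeeping is fixed, the equivalence of condition~(2) with the branched-oper property is an immediate consequence of the already-established Proposition~\ref{prop5}.
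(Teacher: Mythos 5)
Your overall route is the one the paper takes (its proof is the single sentence that Proposition~\ref{propef} follows by comparing Definition~\ref{def1} and Theorem~\ref{thm1} with the definition of ${\rm Op}^{\check\rho,{\rm reg}}_{\mathfrak g}$ in \cite{FG}), and your treatment of condition (2) --- matching triviality of the local monodromy around each point of $S$ with the branched-oper property via Proposition~\ref{prop4}, Corollary~\ref{corp4} and Proposition~\ref{prop5} --- is correct. The genuine gap is in your verification of condition (1), precisely at the step you dismiss as ``purely expository.'' You assert that the spectrum $\{-(r-1),\,-(r-3),\,\cdots,\,r-1\}$ of $-2\check\rho$ becomes the spectrum $\{0,\,1,\,\cdots,\,r-1\}$ of ${\rm Res}({\mathcal D},\,x')$ ``after the normalizing twist by ${\mathcal O}_X((r-1)S)$.'' That step fails: tensoring by a line bundle carrying a logarithmic connection shifts \emph{all} residue eigenvalues by one and the same scalar (this is exactly how the paper passes from ${\mathbb D}$ to $\widetilde{\mathbb D}$ in the proof of Theorem~\ref{thm1}), and an arithmetic progression of common difference $2$ is not a translate of one of common difference $1$. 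Indeed, the traceless part of ${\rm diag}(0,1,\cdots,r-1)$ is conjugate to $\check\rho$, not to $2\check\rho$, so the residue of ${\mathcal D}$ computed on the lattice $E$ and the Beilinson--Drinfeld residue of the induced oper with regular singularity are genuinely different quantities; conflating them is the error. The Beilinson--Drinfeld residue is computed only after bringing the connection to oper normal form on the punctured disc, and the gauge transformation achieving this acts with a \emph{different} power of $t$ on each graded piece --- the same non-uniform twisting already visible in \eqref{j1}, where $F_j/F_{j-1}$ and $H_j/H_{j-1}$ differ by the $j$-dependent factor ${\mathcal O}_X(-(r-j)S)$.

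Here is the computation that repairs condition (1). In the local splitting \eqref{e25}, write the branched oper connection near $x'\in S$ as $d+A(t)\,dt$ in a coordinate $t$ centered at $x'$, where $A$ is holomorphic and its subdiagonal entries are $t\,c_i(t)$ with $c_i(0)\,\neq\,0$, by condition (5) of Definition~\ref{def1}. Gauging by $t^{2\check\rho}$ multiplies the $i$-th subdiagonal entry by $t^{-\langle\alpha_i,2\check\rho\rangle}\,=\,t^{-2}$, multiplies the entries above the diagonal by positive powers of $t$, and adds $-2\check\rho\,t^{-1}$ to the diagonal, producing
\[
d+t^{-1}\Big(\sum_i c_i(t)f_i\,-\,2\check\rho\,+\,t\,{\mathbf u}(t)\Big)dt,
\qquad {\mathbf u}(t)\,\in\,{\mathfrak b}[[t]],
\]
where the $f_i$ are the simple negative root vectors; this is an oper with regular singularity in the normal form of \cite{FG}. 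Its residue is the class of $\sum_i c_i(0)f_i-2\check\rho$ in ${\mathfrak h}/W$, and since a lower-triangular matrix with distinct diagonal entries is conjugate to its diagonal part, this class equals $\varpi(-2\check\rho)$, as condition (1) requires. With this substitution (and the same bookkeeping run backwards when inverting the construction through the Hecke modifications of Theorem~\ref{thm1}), your proposal carries out the comparison the paper intends.
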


The proof of Proposition \ref{propef} follows directly by comparing Definition
\ref{def1} and Theorem \ref{thm1} with the 
definition of the subspace ${\rm Op}^{\check\rho,{\rm reg}}_{\mathfrak g}$ in \cite{FG}.

\section*{Acknowledgements}

We are very grateful to Edward Frenkel for communicating Section \ref{se7}.
We thank the referee for helpful comments.


\end{document}